\newcommand{\scrS }{\mathscr{S}}
\theoremstyle{plain}
\newtheorem{Theorem}{Theorem}[section]
\newtheorem{Proposition}[Theorem]{Proposition}
\newtheorem{Lemma}[Theorem]{Lemma}
\newtheorem{Remark}[Theorem]{Remark}
\numberwithin{Theorem}{section}
\numberwithin{equation}{section}
\def\proof{\noindent{{\bf Proof. }}}
\def\square{\vbox{
\hrule height .4pt \hbox{\vrule width .4pt height 7pt \kern 7pt
\vrule width .4pt} \hrule height .4pt }}
\def\QED{\hfill {$\square$}\goodbreak \medskip}
\newcommand{\average}{{\mathchoice {\kern1ex\vcenter{\hrule height.4pt
width 6pt depth0pt} \kern-9.7pt} {\kern1ex\vcenter{\hrule
height.4pt width 4.3pt depth0pt} \kern-7pt} {} {} }}
\def\R{\mathbb{R}}
\renewcommand{\a }{\alpha }
\renewcommand{\d}{\delta }
\renewcommand{\phi}{\varphi}
\newcommand{\e }{\varepsilon }
\newcommand{\g }{\gamma}
\renewcommand{\l }{\lambda }
\renewcommand{\L }{\Lambda }
\newcommand{\vp }{\varphi }
\newcommand{\s }{\sigma }
\renewcommand{\t }{\tau }
\renewcommand{\th }{\theta }
\renewcommand{\O }{\Omega }
\newcommand{\ov}{\overline}
\newcommand{\be}{\begin{equation}}
\newcommand{\ee}{\end{equation}}
\newcommand{\de}{\partial}
\newcommand{\ti}{\widetilde}
\newcommand{\N}{\mathbb{N}}
\newcommand{\cA}{{\mathcal A}}
\newcommand{\cD}{{\mathcal D}}
\newcommand{\cF}{{\mathcal F}}
\newcommand{\cH}{{\mathcal H}}
\newcommand{\cK}{{\mathcal K}}
\newcommand{\cO}{{\mathcal O}}
\newcommand{\cU}{{\mathcal U}}
\renewcommand{\epsilon}{\varepsilon}
\begin{document}

\title[A new  class of exceptional subdomains of $\R^3$] 
{On an electrostatic problem and a new  class of exceptional subdomains of $\R^3$}

\author{Mouhamed Moustapha Fall}
\address{M. M. F.: African Institute for Mathematical Sciences in Senegal, KM 2, Route de
Joal, B.P. 14 18. Mbour, Senegal.}
\email{mouhamed.m.fall@aims-senegal.org}


\author[I. A. Minlend]{Ignace Aristide Minlend} 
\address{Faculty of Economics and Applied Management, University of Douala}
\email{\small{ignace.minlend@univ-douala.com} }

\author{Tobias Weth}
\address{T.W.:  Goethe-Universit\"{a}t Frankfurt, Institut f\"{u}r Mathematik.
Robert-Mayer-Str. 10 D-60629 Frankfurt, Germany.}

\email{weth@math.uni-frankfurt.de}

\keywords{Overdetermined problems, exceptional domains, electrostatic equilibrium}


\begin{abstract}
We study  the existence of nontrivial  unbounded  surfaces $S\subset \R^3$ with the property that the constant charge distribution on $S$ is an electrostatic equilibrium, i.e. the resulting electrostatic force is normal to the surface at each point on $S$. Among bounded regular surfaces $S$, only the round sphere has this property by a result of Reichel \cite{reichel1} (see also Mendez and Reichel \cite{ReichelMendez}) confirming a conjecture of P. Gruber. In the present paper, we show the existence of nontrivial unbounded  exceptional domains $\O \subset \R^3$  whose boundaries  $S=\partial \O$ enjoy the above property.
\end{abstract}
\maketitle

\textbf{MSC 2010}:  35J57, 35J66,  35N25, 35J25, 35R35, 58J55 
\section{Introduction and main result}

A smooth domain $\O$ of the Euclidean space $\R^N$ is called  \emph{exceptional} if  there exists a positive harmonic function in $\Omega$ which satisfies the overdetermined boundary conditions $u=0$ and $\partial_\nu u = {\rm const\ne 0}$ on $\partial \Omega$, where $\partial_\nu$ denotes the outer normal derivative on $\partial \Omega$. Half spaces in $\R^N$, $N \ge 1$ and complements of balls in $\R^N$, $N \ge 2$ are trivial examples of exceptional domains. Moreover, if $\Omega \subset \R^N$ is exceptional, then $\Omega \times \R^k$ is obviously exceptional in $\R^{N+k}$. In particular, cylindrical domains of the type $B \times \R^k$ with a ball $B \subset \R^N$ are exceptional in $\R^{N+k}$ if $N \ge 2$.

The problem of finding and classifying \emph{exceptional domains} was first  studied by L. Hauswirth, F. H\'{e}lein, and F.
Pacard  in  the seminal paper \cite{hauswirth-et-al}, where the authors exhibit, in particular, the nontrivial example
\begin{equation}
  \label{eq:catenoid-ex}
\Omega_0:= \{(x,y) \in \R^2: |y  |<\frac{\pi}{2}+\cosh (x)\}  
\end{equation}
in the plane. The classification problem in the planar case was then further studied by Khavinson, Lundberg and Teodorescu \cite{KTeo}, who showed that within a specific class of planar expectional domains characterized by additional assumptions, the only examples up to rotation and translation are the exterior of
a disk, a halfplane and the nontrivial domain $\Omega_0$. Shortly later, Traizet \cite{Traizet} developed a different approach which allowed him to characterize these three examples as the only planar exceptional domains having finitely boundary components. Moreover, he established  a one-to-one correspondence between  exceptional domains  of $\R^2$  and the so called  \emph{minimal bigraphs}. With the help of this correspondence, he also reveals the existence of a nontrial periodic  exceptional domain corresponding  to Scherk's simply periodic minimal bigraphs, \cite[Example 7.3]{Traizet}. It is worth noting that the previous examples are not the only exceptional domains in the plane. Indeed, a family of infinitely connected exceptional domains was  already discovered   in fluid dynamics   by G. R. Baker, P. G. Saffman and J. S. Sheffield  \cite{Baker}, when  modelling  hollow vortex equilibria  with  an infinite periodic array of vertices, see also \cite{CrowdyGreen}.

Despite the significant efforts in the previous literature, the structure of the set of exceptional domains in dimensions $N \ge 3$ remains largely unknown. With the help of an earlier result from \cite{reichel1}, the exteriors of balls were classified in \cite[Theorem 7.1]{KTeo} as the only exceptional domains  in  $\R^N$ whose complements  are  bounded, connected and have $C^{2, \alpha}$ boundaries. In the planar case, this classification holds under much weaker regularity assumptions, see \cite{EKS} and \cite{ReichelMendez}. On the other hand, analogues of the domain $\Omega_0$ in (\ref{eq:catenoid-ex}) in higher dimensions have been detected recently in \cite{LiuWangWei}.

In this   work, we deal  with the  construction of a new class of exceptional subdomains $\Omega \subset \R^3$ with the property that $\partial \Omega$ does not have constant principal curvatures. Specifically, we wish to study domains of the form $\Omega = \R^3 \setminus \overline D$, where $D \subset \R^3$ is regular open set which is periodic in $x_3$-direction and bounded in directions $x_1, x_2$ such that the  overdetermined problem  
\begin{equation}\label{eq: Poisson-Dir-exceptional}
\left\{
\begin{aligned}
    -\Delta u&=0&&\qquad \textrm{in} \quad  \Omega\vspace{3mm}\\
u&=0,  \quad \frac{\partial u}{\partial \nu}=const\ne 0 &&\qquad\textrm{on }\quad \partial\Omega
  \end{aligned}
\right.
\end{equation}
is solvable by a function $u$ of constant sign. Here, as before, $\nu$  is the unit outward normal vector to  the boundary
$\partial \O$.

The domains we wish to construct are complements of perturbed cylinders of the form 
\begin{equation}
  \label{eq:def-o-phi}
\O_\phi:=\left\{ (z,t)\in
\mathbb{R}^{2}\times\mathbb{R}\,:\, |z|>\phi(t)  \right\}\subset\R^3,
\end{equation}
where $\phi: \R \to (0,\infty)$ is a $2 \pi$-periodic function of class $C^{2,\alpha}$ for some $\alpha>0$. The case $\phi \equiv \lambda$ corresponds to the complement of a straight cylinder $\Omega_\lambda$ of radius $\lambda>0$. In this case, the function 
$$
(z,t) \mapsto u_\lambda(z,t)= \log \frac{|z|}{\lambda}
$$ 
is -- up to a multiplicative constant -- the unique nontrivial solution of (\ref{eq: Poisson-Dir-exceptional}) which is periodic in $t$ and grows at most logarithmically in $|z|$.
Our aim is to study possible bifurcation of nontrivial exceptional domains from the branch $\Omega_\lambda$, $\lambda>0$.

The domains we construct solve an electrostatic problem, as they enjoy the property that the constant charge distribution on $\partial \O_\phi$ is an electrostatic equilibrium. The question of determining equilibrium distributions has been intensively investigated by several authors in the literature, \cite{Payne, Philippin, Martensen, reichel1, reichel2, ReichelMendez}. In \cite{reichel1,reichel2}, Reichel addressed a conjecture by P. Gruber which states that the  equilibrium distribution is constant on the boundary of a domain if and only if the domain is a ball, and he proved the conjecture within the class of $C^{2,\alpha}$-domains. Later in \cite{ReichelMendez}, 
Mendez and Reichel proved the validity of the conjecture  in dimension $N=2$ for the class of bounded Lipschitz domains and in dimension $N\geq 3$ for the class of bounded convex domains. 

In  our case,  we are  concerned with the  unbounded  exceptional domains of the form 
$\O_\phi$ given in (\ref{eq:def-o-phi}). To  set-up our problem, we  intend to make the following ansatz which is directly inspired by the electrostatic interpretation of the problem. We put $Z_\phi:= \partial \Omega_\phi$ and define 
$$
u_\phi: \R^3 \to \R, \qquad u_\phi(x):=  \frac{1}{4\pi} \int_{Z_{\phi}}\left( \frac{1}{|x-y|} -\frac{1}{|y|}\right)d\sigma(y), \qquad x = (z,t) \in \R^3. 
$$
Then $u_\phi$ represents the normalized (because $Z_\phi$ is unbounded) electrostatic potential induced by a globally constant charge of density $1$ on $Z_\phi$, see \cite{ReichelMendez}. It is well known that $u_\phi$ is continuous on $\R^3$ and $u_\phi \in C^1(\overline {\O_\phi})\cap C^1(\overline {D_\phi})$, where  
$$
D_\phi:=\R^N \setminus \overline {\O_\phi} = \left\{ (z,t)\in
\mathbb{R}^{2}\times\mathbb{R}\,:\, |z|<\phi(t)  \right\}\subset\R^3.
$$
Moreover, $u_\phi$ is harmonic in $\O_\phi \cup D_\phi$, and on $Z_\phi$, see \cite[Theorem 1.11]{G. Verchota} 
it satisfies the {\em jump condition} 
\begin{equation}
  \label{eq:jump-condition}
\frac{\partial u_\phi}{\partial \nu^\pm} = \mathcal{N}_\phi \mp \frac{1}{2}.
\end{equation}
Here $\nu^+$ resp. $\nu^-$ is the unit normal on $Z_\phi$ pointing inside $\Omega_\phi$, $D_\phi$, respectively and $\mathcal{N}_\phi: Z_\phi \to \R$ is given by 
\begin{equation}
  \label{eq:electric-field}
\mathcal{N}_\phi(x):=\frac{1}{4\pi} \int_{Z_\phi} \frac{(y-x) \cdot \nu^+(x)}{|x-y|^3} d \sigma(y).
\end{equation}

Following \cite{ReichelMendez}, the property that the constant charge density is an equilibrium distribution can be rewritten as the equation 
\begin{equation}
\label{reichel-mendes-eq}
\mathcal{N}_\phi +\frac{1}{2} = 0 \qquad \text{on $Z_\phi$.}  
\end{equation}

Indeed, in this case it follows from (\ref{eq:jump-condition}) that $\frac{\partial u_\phi}{\partial \nu^-}\equiv 0$ on $Z_\phi$ and therefore $u_\phi$ equals a constant on $\overline D_\phi \supset Z_\phi$ since it is harmonic in $D_\phi$. More precisely, since $0\in D_{\vp}$ and  $u_\phi(0)=0$ by definition of $u_\phi$, we deduce that that $u \equiv 0$ in $D_{\vp}$. It follows that the function $u_\phi$ satisfies the overdetermined problem (\ref{eq: Poisson-Dir-exceptional}) on $\Omega= \Omega_\phi$ in this case. Hence $\Omega_\phi$ is an exceptional domain provided that $u_\phi$ does not change sign.  To see this latter fact, we estimate 
\begin{align}
u_\vp(z,t)&= \int_{Z_\phi} \left( \frac{1}{(|z-y'|^2 +(s-t)^2  )^{1/2}}-\frac{1}{|y|} \right) d\sigma(y) \nonumber\\
&\leq \int_{Z_\phi} \left( \frac{1}{((|z| /2)^2 +(s-t)^2  )^{1/2}}- \frac{1}{|y|} \right) d\sigma(y)\qquad \text{for  $|z|\geq 3 \|\vp\|_{L^\infty(\R)}$,} \label{eq:est-u-vp-intro}
\end{align}
where we have written $y = (y',s)$ with $y' \in \R^2, s \in \R$. Since the integrand in (\ref{eq:est-u-vp-intro}) is monotone decreasing in $|z| \ge 3 \|\vp\|_{L^\infty(\R)}$ and tends to the function $y \mapsto -\frac{1}{|y|}$ as $|z| \to \infty$, monotone convergence yields
\begin{equation}
  \label{eq:u-phi-asymptotics}
\lim_{|z|\to \infty} u_\vp(z,t)=-\int_{Z_\phi}\frac{1}{|y|}d\sigma(y)= -\infty.
\end{equation}
Moreover, since $u_\vp$ is harmonic, it cannot attain a maximum in $\Omega_\phi$ unless it is constant, which is excluded by (\ref{eq:u-phi-asymptotics}). Since, in addition, $u_\vp=0$ on $Z_\vp$ and $u_\vp$ is $2\pi$-periodic in the $t$ direction, it follows that $u_\vp<0$ in $\O_\vp$.\\ 
Consequently, our problem is reduced to the problem of finding nonconstant $2 \pi$-periodic $C^2$ functions $\phi: \R \to (0,\infty)$ such that (\ref{reichel-mendes-eq}) holds on $Z_\phi$.  As noted already, the constant functions $\phi \equiv \lambda$, $\lambda>0$ are trivial solutions of (\ref{reichel-mendes-eq}).\\

The following is our main result.
 
\begin{Theorem}\label{theo1}
Let  $\alpha \in (0,1)$. There exists a strictly decreasing  sequence  $(\lambda_k)_{k\geq 1}$ with  $\lim \limits_{k \to \infty}\lambda_k= 0$  and the following properties: For each  $k \geq 1$, there exists $\e_k>0$  and a smooth map
\begin{align*}
 (-\e_k,\e_k) &\longrightarrow  (0,\infty) \times C^{2,\alpha}(\mathbb{R})\\
        s&\longmapsto (\lambda_k(s),\psi^k_s)
 \end{align*}
with  $\lambda_k(0)=\lambda_k$, $\psi^k_0\equiv 0$ and such that
for all
$s\in(-\e_k,\e_k),$ letting $\phi^k_s:=\lambda_k(s)+\psi^k_s$, the set
\begin{equation}\label{petcy}
\O_{\phi^k_s}= \biggl\{(z,t)\in \mathbb{R}^{2}\times\mathbb{R}:\quad
|z|>  \phi^k_s( t)\biggl\}
\end{equation}
is an  exceptional domain. In addition,  its boundary $Z_{\phi^k_s}=\de\O_{\phi^k_s}$ satisfies \eqref{reichel-mendes-eq}.

Moreover, for every $s \in (-\e_k,\e_k)$, the function $\psi^k_s$ is even and ${2\pi} $-periodic in
$t$. Furthermore, we
have
$$
\psi^k_s(t)=s\Bigl(\cos( k t)+\mu^k_s(t)\Bigr) \qquad \text{for $s \in (-\e_k,\e_k)$}
$$
with a smooth map $ (-\e_k,\e_k) \to  C^{2,\alpha}(\mathbb{R})$, $s\mapsto \mu^k_s$ satisfying
$$
\int_{0}^{2\pi} \mu^k_s(t) \cos( kt) \,dt = 0 \qquad \text{for $s \in (-\e_k,\e_k)$}
$$
and $\mu^k_0\equiv 0$.\\
\end{Theorem}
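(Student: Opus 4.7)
\medskip
\noindent\textbf{Proof plan.} My plan is to apply the Crandall--Rabinowitz bifurcation theorem to a nonlinear operator that encodes the electrostatic equilibrium equation (\ref{reichel-mendes-eq}) as a condition on the profile function $\phi$. Concretely, let $X$ be the space of even $2\pi$-periodic $C^{2,\alpha}$ functions on $\mathbb{R}$ with vanishing mean, and $Y$ the space of even $2\pi$-periodic $C^{1,\alpha}$ functions with vanishing mean; let $U \subset X$ be a neighborhood of $0$ small enough to guarantee that $\lambda+\psi>0$ whenever $\lambda>0$ and $\psi \in U$. For $(z,t)\in Z_{\lambda+\psi}$, parametrized by the angle $\theta$ in the $(x_1,x_2)$-plane and by $t$, the quantity $\mathcal{N}_{\lambda+\psi}(\theta,t)+\tfrac12$ is independent of $\theta$ by rotational invariance, so after projecting onto the mean-free part of the $t$-variable we obtain a smooth map
\begin{equation*}
F: (0,\infty) \times U \longrightarrow Y, \qquad F(\lambda,\psi)(t) := \mathcal{N}_{\lambda+\psi}(t) + \tfrac12,
\end{equation*}
well-defined by standard single-layer potential regularity. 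The constant-charge property of straight cylinders gives $F(\lambda,0)\equiv 0$, so the whole ray $\{(\lambda,0):\lambda>0\}$ is a trivial branch of solutions.

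The heart of the argument is the study of the linearization $L_\lambda := D_\psi F(\lambda,0): X \to Y$. Because the straight cylinder is invariant under translations in $t$, $L_\lambda$ commutes with shifts and is therefore a Fourier multiplier in the variable $t$: writing $\psi(t)=\sum_{k\ge 1} a_k \cos(kt)$, one has
\begin{equation*}
L_\lambda \psi(t) = \sum_{k\ge 1} m_k(\lambda)\, a_k \cos(kt).
\end{equation*}
I would compute $m_k(\lambda)$ explicitly by substituting the perturbed parametrization $(\lambda+\e\cos(kt))(\cos\theta,\sin\theta,\cdot)$ into the definition (\ref{eq:electric-field}), differentiating in $\e$ at $\e=0$, and carefully expanding the Jacobian of the surface measure, the normal $\nu^+$, and the integrand. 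The resulting singular integral can be reduced, by integrating out $\theta$ and using the residue-type Fourier representation on the line, to a formula of the form $m_k(\lambda)=M(k\lambda)$ for a single universal function $M:(0,\infty)\to\mathbb{R}$, naturally expressible in terms of modified Bessel functions $I_0, I_1, K_0, K_1$; this scaling reduction is an immediate consequence of the fact that dilating $x\mapsto x/\lambda$ maps $Z_\lambda$ to $Z_1$ and replaces the wavenumber $k$ by $k\lambda$.

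The main obstacle is to analyze $M$ and to show that it admits a positive simple zero $\mu_*$, that is, $M(\mu_*)=0$ and $M'(\mu_*)\ne 0$. Given this, set
\begin{equation*}
\lambda_k := \mu_*/k, \qquad k\ge 1,
\end{equation*}
so that $(\lambda_k)_{k\ge 1}$ is strictly decreasing with $\lambda_k \to 0$. By construction $m_j(\lambda_k)=M(j\mu_*/k)=0$ iff $j=k$ (assuming, as is generic for Bessel-type expressions, that $M$ does not vanish at the other multiples $j\mu_*/k$; in case of accidental coincidences one restricts the bifurcation parameter to $\lambda$ close enough to $\lambda_k$ so that only the $k$-th multiplier vanishes). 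Thus $\ker L_{\lambda_k} = \operatorname{span}\{\cos(kt)\}$ is one-dimensional, $\operatorname{range}L_{\lambda_k}$ has codimension one in $Y$, and the transversality condition
\begin{equation*}
\partial_\lambda m_k(\lambda_k) = k\, M'(\mu_*) \neq 0
\end{equation*}
ensures that $D^2_{\lambda\psi} F(\lambda_k,0)[\cos(kt)]\notin \operatorname{range} L_{\lambda_k}$.

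With the spectral picture in hand, Crandall--Rabinowitz yields, for each $k\ge 1$, an $\e_k>0$ and a smooth curve $s\mapsto(\lambda_k(s),\psi_s^k)\in (0,\infty)\times X$ with $\lambda_k(0)=\lambda_k$, $\psi_0^k\equiv 0$, and $F(\lambda_k(s),\psi_s^k)=0$ for $|s|<\e_k$. The standard parametrization of the branch takes the form $\psi_s^k = s(\cos(kt)+\mu_s^k)$ with $\mu_s^k$ smooth in $s$ and orthogonal to $\cos(kt)$ in $L^2(0,2\pi)$, and $\mu_0^k\equiv 0$; evenness and $2\pi$-periodicity of $\psi_s^k$ are automatic because the construction takes place inside $X$. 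Finally, the discussion preceding the theorem (culminating in (\ref{eq:u-phi-asymptotics})) shows that whenever $F(\lambda,\psi)\equiv 0$ the corresponding $u_{\lambda+\psi}$ is nonzero and of constant sign, so each $\Omega_{\phi_s^k}$ is indeed an exceptional domain, completing the proof.
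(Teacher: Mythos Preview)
Your plan follows the same route as the paper: recast \eqref{reichel-mendes-eq} as $\Phi(\lambda,\psi)=0$ on even $2\pi$-periodic H\"older spaces with zero mean, show $\Phi(\lambda,0)\equiv 0$, compute the linearization as a Fourier multiplier $L_\lambda e_k = V(\lambda k)e_k$ via the scaling you describe, locate a simple zero $\lambda_*$ of $V$, set $\lambda_k=\lambda_*/k$, and apply Crandall--Rabinowitz. The paper carries this out with $V(\rho)=4\pi\rho I_1(\rho)\bigl(\rho K_1(\rho)-K_0(\rho)\bigr)$.

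There are, however, two places where your plan glosses over work that the paper treats as genuinely nontrivial. First, your handling of the one-dimensional kernel is not right: you need $M(j\lambda_k)=0\iff j=k$, and your proposed fallback (``restrict $\lambda$ close enough to $\lambda_k$'') does not help, since Crandall--Rabinowitz requires the kernel to be simple \emph{at} the bifurcation value itself. The paper resolves this by proving that $V$ has a \emph{unique} positive zero (Lemma~\ref{derv222}), via explicit Bessel-function inequalities that confine any zero to the interval $(\tfrac12,\tfrac{1+\sqrt{17}}{8})$ and force $V'>0$ there; this is not a genericity remark but an honest computation. Second, you assert that $\operatorname{range}L_{\lambda_k}$ has codimension one in $Y$ without indicating why. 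Since $L_\lambda$ is a nonlocal operator of order one, showing that the formal Fourier inverse of a function in $C^{1,\alpha}$ actually lands back in $C^{2,\alpha}$ requires a Schauder-type regularity result for equations of the form $PV\!\int(v(s)-v(\tau))G(s-\tau)\,d\tau+\lambda v=f$; the paper establishes this separately (Lemma~\ref{LemmTworegul}) by invoking regularity theory for integro-differential operators and then bootstrapping. Without this step the Fredholm hypothesis of Crandall--Rabinowitz is unverified. A third, smaller point: the smoothness of $\Phi$ between H\"older spaces is not ``standard single-layer regularity'' but needs a careful decomposition of the hypersingular integrand (Section~\ref{sec: regular} in the paper).
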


As mentioned earlier, since the exceptional domains constructed in Theorem  \ref{theo1} satisfy (\ref{eq:op-eq}), they allow the interpretation that the constant charge distribution on $\partial \O_{\phi_s^k}$ is an electrostatic equilibrium. Physically, this means there is no potential difference  between any two points on  the boundary $\partial \O_{\phi^k_s}$ and therefore no electric current which could alter the constant charge distribution \cite{Wermer, ReichelMendez}.\\   

To solve  (\ref{reichel-mendes-eq}), we first need to write this  equation as a functional equation in suitable function space. For $j = 0,1,2$, we let $C_{p,e}^{j,\a}(\R) $ denote the space of even and $2\pi$ periodic functions in $C^{j,\a}(\R)$. Moreover, we let $\cU$ denote the cone of strictly positive functions in
$C^{2,\alpha}_{p,e}(\R)$. We can then write (\ref{reichel-mendes-eq}) as a functional equation in the unknown $\phi \in \cU$ of the form 
\begin{equation}
  \label{eq:op-eq}
H(\phi) +2 \pi = 0 \qquad \text{in $C_{p,e}^{2,\a}(\R)$,} 
\end{equation}
where 
$H: \cU \to C_{p, e}^{1,\a}(\R),$  is a  nonlinear operator defined by
\begin{equation}
  \label{eq:def-H-intro}
H(\phi)(t)= 4 \pi\, \mathcal{N}_\phi(\phi(t)\sigma, t)
\qquad \text{for $\phi \in 
\cU,$ $t \in \R$.}
\end{equation}
Here $\sigma \in S^1$ can be chosen arbitrarily by the rotational invariance of $\mathcal{N}_\phi$. We can further rewrite (\ref{eq:op-eq}) as a bifurcation equation of the form 
\begin{equation}
  \label{eq:bif-eq}
\Phi(\lambda,\phi)=0 \qquad \text{in $C_{p,e}^{1,\a}(\R)$,}
\end{equation}
where $\Phi$ is defined on an open subset of $\R \times C_{p,e}^{2,\alpha}(\R)$ by
\begin{equation} \label{eq:defGGf-eq}
\Phi(\lambda, \phi):= H(\lambda + \phi) +2 \pi.
\end{equation}
With the help of bifurcation theory, we establish the existence of nonconstant solutions $\phi$ close to the trivial branch of solutions $\{(\lambda,0)\::\: \lambda >0\} \subset \R \times C_{p,e}^{2,\alpha}(\R)$. \\

We emphasize that the function $\mathcal{N}_\phi$ has similarities with the so-called {\em nonlocal mean curvature functions of order $\alpha \in (0,1)$ on $Z_\phi$}, which are given by 
\begin{equation}
  \label{eq:nonlocal-mean-curvature}
  x \mapsto c_{\alpha} \int_{Z_\phi} \frac{(y-x) \cdot \nu^+(y)}{|x-y|^{3+\alpha}} d \sigma(y),
\end{equation}
where $c_{\alpha}$ is a constant. The problem of constructing surfaces of the type $Z_\phi$ with constant nonlocal mean curvature has been considered recently by Cabr\'e, Fall and Weth in \cite{CMT}, and Minlend, Niang  and Thiam \cite{Mi.Al.Th.}. 
The difference between (\ref{eq:electric-field})  and (\ref{eq:nonlocal-mean-curvature}) is two-fold. First, the case $\alpha=0$ corresponds to a limiting case which is not admitted in \cite{CMT, Mi.Al.Th.} but more closely related to the $0$-fractional perimeter, which has been studied recently in \cite{DNP}. Moreover, the normal $\nu^+$ is taken at $x$ in (\ref{eq:electric-field}), while it evaluated at $y$ in (\ref{eq:nonlocal-mean-curvature}).

Despite these differences which strongly affect the analysis, the construction  here follows the general strategy of \cite{CMT, Mi.Al.Th.} which is based on an application of the Crandall-Rabinowitz bifurcation theorem, see \cite[Theorem 1.7]{M.CR}. The main difficulties in this approach are the following. On a technical level, proving sufficient regularity of the operator $H$ in (\ref{eq:def-H-intro}) as a map between Hölder spaces is not straightforward since we are dealing with a quasilinear hypersingular integral operator. To prove smoothness of $H$, we follow the approach in\cite{CMT}. We note that we need $H$ to be at least of class $C^2$ to apply the Crandall-Rabinowitz theorem. The second main difficulty is to verify the functional analytic properties of the linearization of $H$ at constant functions $\lambda>0$ which guarantees that the problem admits transversal bifurcation from a simple eigenvalue in the sense of Crandall and Rabinowitz \cite{M.CR}. With regard to this aspect, the key steps of the proof are the representation formula in Lemma~\ref{lem:eigenval}, the transversality property given in Lemma~\ref{derv222} and the Schauder type regularity property in Lemma~\ref{LemmTworegul}. 

At this point, we wish to remark that the Crandall-Rabinowitz bifurcation theorem and other related 
tools in topological bifurcation theory are commonly used in the study of overdetermined boundary value problems. In particular, we wish to mention the references \cite{Fall-MinlendI-Weth, RosRuizSicbaldi, Morabito Sicbaldi, LiuWangWei, F.Morabito1, F.Morabito2}, where similar results are obtained for overdetermined boundary value problems related to different types of linear and nonlinear elliptic equations.

The paper is organized as follows:   In Section \ref{s:N-ope},  we derive suitable representations of the operator $H$ which allow to study its regularity and its linearization at constant functions. In Section \ref{sec: linearized}, we then study the spectral properties of the  linearized operator of $H$ at a  constant function $\l>0$ and derive  qualitative properties of its eigenvalues, as $\l$ varies.  In Section \ref{ss:Existt-d-per-excepti}, we complete the proof of  Theorem \ref{theo1} based on the Crandall-Rabinowitz theorem. In Section~\ref{sec: regular}, we prove smoothness of the operator $H$ in (\ref{eq:def-H-intro}). Finally, in the appendix of this paper, we collect some useful properties of modified Bessel functions.

\bigskip
\noindent \textbf{Acknowledgements}: 
M.M. Fall  and I.A. Minlend are   supported by the Alexander von Humboldt foundation.
Part of this work was  carried  out when  I.A. Minlend and M.M. Fall  were visiting the Goethe University Frankfurt am Main. They are gratefully to the  Mathematics department    for the hospitality.


\section{Representations of the nonlocal nonlinear operator $H$}
\label{s:N-ope}
We fix $\alpha \in (0,1)$ in the following.   For  $j=0,1,2$, we consider the Banach space
\begin{align*}
&C_{p,e}^{j,\alpha}(\R):=\bigl\{  u \in C^{j,\alpha}(\R) : \textrm{$u$ is even and $2\pi$-periodic}\bigl\},
\end{align*}
 and let 
$$
\cU:= \{ \phi \in C^{2,\alpha}_{p,e}(\R) \::\: \min_{\R} \phi>0 \}.
$$
For a function $\phi \in \cU$, we define the domain 
$$
\O_\phi:=\left\{ (z,t)\in
\mathbb{R}^{2}\times\mathbb{R}\,:\, |z|>\phi(t)  \right\}. 
$$
The  boundary 
$$
Z_{\phi}=\left\{(\phi(s) \s, s)\in  \R^2 \times \R  \, : \, \s\in  S^1\right\}
$$  is parametrized by

\begin{equation}\label{eq:parame}
F_{\phi}:  \R\times S^1 \to  \de \O_{\phi}, \qquad F_{\phi}(s, \sigma)= (\phi (s) \sigma, s ),
\end{equation} 
and the  unit normal on $Z_\phi$ pointing inside $\Omega_\phi$ is  given by
\begin{equation}\label{eq:unitinnorm}
\nu_{Z_{\phi}}^+(F_{\phi}(s,\th))=\frac{1}{\sqrt{1+(\phi')^2(s)}}(\th, {-\phi'}(s)) \qquad \textrm{ for } s\in\R,\, \th\in S^1. 
\end{equation}
We recall the definition of the nonlinear operator $H$ given by \eqref{eq:electric-field} and (\ref{eq:def-H-intro}), which can be written as 
\begin{equation}\label{eq:electric-op}
H(\phi)(s):= 4 \pi \mathcal{N}_\phi(s,F_\phi(s,e_1)) = \int_{Z_\phi} \frac{(y-F_\phi(s,e_1)) \cdot \nu^+(F_\phi(s,e_1))}{|F_\phi(s,e_1)-y|^3} d \sigma(y) 
\end{equation}
for $\phi \in \cU$, $s \in \R$. The following lemma provides a more explicit representation of $H$ as an integral operator. 
\begin{Lemma}\label{lem:expreH1}
Let $\phi \in \cU$. Then for $s \in \R$ we have $H(\phi)(s)  = -\frac{1}{\sqrt{1+(\phi')^2(s)}} \cH(\phi)(s)$ with 
\begin{align}\label{ewp1H}
\cH(\phi)(s)&=  \int_{\R} \int_{S^1} \frac{\bigl(\phi(s)-\phi(s-t)- t \phi'(s)\bigl)\phi(s-t)\sqrt{1+(\phi')^2(s-t)} }{\{t^2+ (\phi(s)-\phi(s-t))^2 + \phi(s)\phi(s-t)|\s- e_1|^2 \}^{\frac{3}{2}}}d\s dt \nonumber \\\
  &+ \frac{1}{2} \int_{\R} \int_{S^1} \frac{\phi^2(s-t)|\s- e_1|^2\sqrt{1+(\phi')^2(s-t)} }{\{t^2+ (\phi(s)-\phi(s-t))^2+ \phi(s)\phi(s-t)|\s- e_1|^2 \}^{\frac{3}{2}}}d\s dt.
\end{align}
\end{Lemma}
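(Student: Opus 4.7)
The plan is to parametrize $Z_\phi$ via the map $F_\phi$ from \eqref{eq:parame}, substitute into the defining integral \eqref{eq:electric-op}, and then perform one change of variables $\tau \mapsto s-t$ in the tangential direction. This is essentially an algebraic manipulation; the conceptual work is already done once the parametrization and the normal \eqref{eq:unitinnorm} are in place.

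First I would compute the surface measure for $F_\phi$. From the tangent vectors $\partial_\tau F_\phi(\tau,\sigma) = (\phi'(\tau)\sigma,1)$ and the tangential derivative in $\sigma$ (with length $\phi(\tau)$), a short Gram determinant computation yields $d\sigma(y) = \phi(\tau)\sqrt{1+(\phi'(\tau))^2}\,d\tau\,d\sigma$ for $y = F_\phi(\tau,\sigma)$, with $d\sigma$ on the right denoting arclength on $S^1$.

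Next I would insert $x = F_\phi(s,e_1) = (\phi(s)e_1, s)$ and $y = F_\phi(\tau,\sigma)$ into \eqref{eq:electric-op}. The factor $\frac{1}{\sqrt{1+(\phi'(s))^2}}$ in $\nu^+(x)$ immediately pulls out of the integral, producing the prefactor in the target formula. The bracketed scalar $(y-x)\cdot\nu^+(x)$ reduces, before normalization, to $\phi(\tau)\sigma\cdot e_1 - \phi(s) - \phi'(s)(\tau-s)$. The key algebraic identity $1-\sigma\cdot e_1 = \tfrac{1}{2}|\sigma-e_1|^2$ lets me split $\phi(\tau)\sigma\cdot e_1 = \phi(\tau) - \tfrac{1}{2}\phi(\tau)|\sigma-e_1|^2$ and, applied to the denominator, gives
\[
|y-x|^2 = (\tau-s)^2 + (\phi(\tau)-\phi(s))^2 + \phi(s)\phi(\tau)|\sigma-e_1|^2.
\]
After the substitution $\tau = s-t$, the two pieces of the numerator become exactly $-[\phi(s)-\phi(s-t)-t\phi'(s)]$ and $-\tfrac{1}{2}\phi(s-t)|\sigma-e_1|^2$, which reproduce the two summands of \eqref{ewp1H}, with the overall minus sign yielding $H(\phi)(s) = -\frac{1}{\sqrt{1+(\phi'(s))^2}}\cH(\phi)(s)$.

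I do not expect a real obstacle, only a delicacy of interpretation: both \eqref{eq:electric-op} and \eqref{ewp1H} are singular at $y=x$, i.e.\ at $(t,\sigma)=(0,e_1)$, so they must be read as principal values. The integrand in \eqref{eq:electric-op} is controlled by the fact that $\nu^+(x)$ agrees with the tangent plane of $Z_\phi$ at $x$ to first order (since $\phi \in C^{2,\alpha}$), which gives enough cancellation to make the principal value well-defined. The manipulations above are purely algebraic rearrangements inside the same principal value, so the identity of the two representations follows without further analysis.
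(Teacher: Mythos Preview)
Your proof is correct and follows essentially the same route as the paper: parametrize $Z_\phi$ by $F_\phi$, compute the Jacobian $\phi(\tau)\sqrt{1+(\phi'(\tau))^2}$, expand $|y-x|^2$ and $(y-x)\cdot\nu^+(x)$ using $1-\sigma\cdot e_1=\tfrac{1}{2}|\sigma-e_1|^2$, and substitute $\tau=s-t$. The only difference is interpretive: you read the integrals as principal values, whereas the paper treats them as honest Lebesgue integrals and checks absolute convergence separately (in a remark following the reformulation in Lemma~\ref{lem:expreH2}); since $\phi\in C^{2,\alpha}$ gives $(y-x)\cdot\nu^+(x)=O(|y-x|^2)$ near the diagonal, the original integral \eqref{eq:electric-op} is already absolutely convergent on the $2$-dimensional surface, so the principal-value reading is unnecessary but harmless.
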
 

\proof
Using  \eqref{eq:electric-op} and the  parameterization  \eqref{eq:parame}, we have
\begin{align}
  H(\phi)(s)
&=   \int_{\R}\int_{S^1} \frac{\left\{F_{\phi}(\bar{s}, \s)-F_{\phi}(s,e_1)    \right\} \cdot\nu_{S_{\phi}}^+(F_{\phi}(s,e_1))  }{ |F_{\phi}(s,e_1)-F_{\phi}(\bar{s},\s)|^{3}}  J_{F_{\phi}}(\bar{s}, \s)\,d \bar s d\s,  \label{eq-1-H(u)}
 \end{align}
where
 $$
J_{F_{\phi}}(\bar{s}, \s)=   \phi(\bar{s})\sqrt{1+(\phi')^2(\bar{s})}  \quad \textrm{ for } \bar{s}\in\R,  \s\in S^1.
$$
We also note that for $s, \bar s \in \R$ and $\s \in  S^1$ we have
\begin{align*}
|F_{\phi}(s,e_1)-F_{\phi}(\bar{s},\s)|^2 &= |s-\bar s|^2 + | \phi(s)e_1 - \phi(\bar s) \s|^2\\
& = |s-\bar{s}| ^2+ (\phi(s)-\phi(\bar s))^2 +2 \phi(s)\phi(\bar s)(1-   e_1 \cdot  \sigma )
\end{align*}
and
\begin{align*}
\left\{F_{\phi}(\bar{s}, \s)-F_{\phi}(s,e_1)    \right\} \cdot\nu_{S_{\phi}}^+(F_{\phi}(s,e_1)) &= \frac{(s-\bar s) \phi'(s)-(\phi(s)e_1 - \phi(\bar s) \s)e_1  }{\sqrt{1+(\phi')^2(s)}}   \\
&=  \frac{\phi(\bar{s})-\phi(s)+ (s-\bar s)\phi'(s)-\phi(\bar{s})(1-e_1\cdot  \sigma)}{\sqrt{1+(\phi')^2(s)}}.
\end{align*}
Inserting these identities in \eqref{eq-1-H(u)}, we find that $\mathcal{H}(\phi)(s)=-\sqrt{1+(\phi')^2(s)}H(\phi)(s)$ satisfies
 \begin{align*}
   &\mathcal{H}(\phi)(s)=- \int_{\R} \int_{S^1} \frac{\bigl(\phi(\bar{s})-\phi(s)+ (s-\bar s) \phi'(s)-\phi(\bar{s})(1-e_1\cdot \sigma)\bigl)\phi(\bar{s})\sqrt{1+(\phi')^2(\bar{s})} }{\{|s-\bar{s}| ^2+ (\phi(s)-\phi(\bar s))^2 +2 \phi(s)\phi(\bar s)(1-   e_1\cdot  \sigma ) \}^{\frac{3}{2}}}d\s d\bar{s}\\
   &=  \int_{\R} \int_{S^1} \frac{\bigl(\phi(s)-\phi(s-t)- t \phi'(s)+\phi(s-t)(1-e_1\cdot \sigma)\bigl)\phi(s-t)\sqrt{1+(\phi')^2(s-t)} }{\{t^2+ (\phi(s)-\phi(s-t))^2 +2 \phi(s)\phi(s-t)(1-  e_1\cdot  \sigma ) \}^{\frac{3}{2}}}d\s dt\\
   &= \int_{\R} \int_{S^1} \frac{\bigl(\phi(s)-\phi(s-t)- t \phi'(s)+\phi(s-t)(1- \sigma_1)\bigl)\phi(s-t)\sqrt{1+(\phi')^2(s-t)} }{\{t^2+ (\phi(s)-\phi(s-t))^2 +2 \phi(s)\phi(s-t)(1-  \sigma_1 ) \}^{\frac{3}{2}}}d\s dt.
 \end{align*}
Here,  the second equality follows from the change of variable $t= s-\bar s$. Moreover, we use $1-\s_1=\frac{|\s-e_1|^2}{2}$ to get (\ref{ewp1H}) from the third equality. 

\QED

In the next lemma, we rewrite the representation of the operator $H$ from Lemma~\ref{lem:expreH1} in a somewhat more convenient form, which will then allow us to study its regularity and to compute its linearization at constant functions.

We need some preliminary definitions and observations. First we set
$$
p_{\s}:=|\s-e_1| \qquad \text{for $\s \in S^1$}
$$
and  define the  maps $\L_0,  \L_1: C^{2,\alpha}(\R)\times\R^2 \to \R$ by
\begin{align}
\L_0(\phi,s,t)&:= \frac{\phi(s)-\phi(s-t)}{t} = \int_0^1 \phi' (s-\rho t) d\rho ,  \label{eq:def-Lamb-0}\\
\L_1(\phi, s,t)&:= \L_0(\phi,s,t)- \phi' (s) =  \int_0^1( \phi' (s-\rho  t)- \phi' (s) )d\rho.\label{eq:def-Lamb-2}
\end{align}
With these  notations, we get the follow alternative representation of the operator $H$ in Lemma  \ref{ewp1H}.

\begin{Lemma}\label{lem:expreH2}
For every $\phi \in \cU$ and $s\in \R$, we have $H(\phi)(s)  = -\frac{1}{\sqrt{1+(\phi')^2(s)}} \cH(\phi)(s)$ with 
\begin{align}\label{expreH2}
\cH(\phi)(s)&= \int_{S^1} \frac{1}{p_\s}  \int_{\R}\mathcal{M}(\phi, s,t, p_\s)dt d \s+ \frac{1}{2} \int_{S^1}  \int_{\R} \mathcal{\ov{M}}(\phi, s,t, p_\s)dt d \s 
\end{align}
with
\begin{equation}
  \label{eq:def-M-bar-M}
\begin{aligned}
\mathcal{M}(\phi, s,t, p) &:=t \L_1(\phi, s,t p) \mathcal{{K}}(\phi, s, t,tp),\\
\mathcal{\ov{M}}(\phi, s,t, p) &:=\phi(s-t p) \mathcal{{K}}(\phi, s,t, tp)
\end{aligned}
\end{equation}
and
\begin{equation}
  \label{eq:def-K}
\mathcal{K}(\phi, s,t, \xi):=\frac{\phi(s-\xi) \sqrt{1+(\phi')^2(s-\xi)}}{\{t^2+ t^2 \L^2_0(\phi,s,\xi)+ \phi(s)\phi(s-\xi)\}^{\frac{3}{2}}}.
\end{equation}
\end{Lemma}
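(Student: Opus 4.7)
The plan is to derive \eqref{expreH2} from \eqref{ewp1H} by a change of variables $t = \tau p_\s$ in the inner $t$-integral, preceded by some algebraic rewriting. The proof is purely computational, and my aim would be to do it transparently rather than to grind through all factors.

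The first step is algebraic. Using the identities
$$\phi(s)-\phi(s-t)-t\phi'(s) = t\L_1(\phi,s,t) \quad \text{and} \quad (\phi(s)-\phi(s-t))^2 = t^2 \L_0^2(\phi,s,t)$$
from \eqref{eq:def-Lamb-0}--\eqref{eq:def-Lamb-2}, together with $|\s - e_1|^2 = p_\s^2$, I would rewrite the integrands in \eqref{ewp1H} so that the common denominator reads $\{t^2 + t^2 \L_0^2(\phi,s,t) + \phi(s)\phi(s-t)\, p_\s^2\}^{3/2}$, the first numerator becomes $t\L_1(\phi,s,t)\phi(s-t)\sqrt{1+(\phi')^2(s-t)}$, and the second becomes $p_\s^2\, \phi^2(s-t)\sqrt{1+(\phi')^2(s-t)}$.

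Next, I would invoke Fubini to exchange the order of integration and, for each fixed $\s \in S^1\setminus\{e_1\}$ (so that $p_\s > 0$), perform the substitution $t = \tau p_\s$, $dt = p_\s\, d\tau$ in the inner integral over $\R$. Pulling $p_\s^2$ out of the brace in the denominator yields a factor $p_\s^3$ after taking the $3/2$-power, and collecting the remaining factors of $p_\s$ coming from $t = \tau p_\s$ in the numerator and the Jacobian, the first integrand reduces to $p_\s^{-1}\, \tau\, \L_1(\phi,s,\tau p_\s)\mathcal{K}(\phi,s,\tau,\tau p_\s) = p_\s^{-1}\mathcal{M}(\phi,s,\tau,p_\s)$, with $\mathcal{K}$ and $\mathcal{M}$ as in \eqref{eq:def-M-bar-M}--\eqref{eq:def-K}; similarly, the second integrand becomes $\phi(s-\tau p_\s)\mathcal{K}(\phi,s,\tau,\tau p_\s) = \mathcal{\ov{M}}(\phi,s,\tau,p_\s)$. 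Renaming $\tau$ back to $t$ and reassembling gives \eqref{expreH2}.

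The only technical concern is the justification of Fubini and of the substitution, since the integrand in \eqref{ewp1H} has a geometric singularity at $(t,\s) = (0,e_1)$. However, $\{e_1\} \subset S^1$ has zero arclength measure, and the integrand is absolutely integrable on $\R \times S^1$ thanks to the regularizing factor $t\L_1(\phi,s,t) = O(t^2)$ as $t \to 0$ (using $\phi \in C^{2,\a}$) in the first term and the factor $|\s - e_1|^2 = p_\s^2$ in the second term. This integrability --- which is implicit in the validity of \eqref{ewp1H} --- is preserved under the rescaling, so the manipulation is rigorous. I do not expect any other real obstacle; the content of the lemma is essentially a bookkeeping exercise that prepares the operator $H$ for the regularity analysis and the linearization computations of the later sections.
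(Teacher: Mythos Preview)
Your proposal is correct and follows essentially the same approach as the paper: the paper's proof is a single line stating that \eqref{expreH2} follows from \eqref{eq:def-Lamb-2} and \eqref{ewp1H} via the change of variables $\tau = t/p_\s$, and you have simply spelled out the algebra and added a remark on integrability (which the paper defers to the remark immediately following the lemma).
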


\proof
The expression \eqref{expreH2} follows from \eqref{eq:def-Lamb-2} and \eqref{ewp1H} with the change of variables $\t=\frac{t}{p_\s}.$
\QED

\begin{Remark}
\label{absolute-convergence-constant-functions}{\rm 
(i) It does not appear obvious that the integrals in Lemma~\ref{lem:expreH1} and Lemma~\ref{lem:expreH2} converge absolutely in Lebesgue sense. To see this, it suffices to consider the first integral in \eqref{expreH2}, since second integral in \eqref{expreH2} is clearly convergent. We let  $\delta := \min_{\R} \phi>0$ and we note that
\begin{equation}
  \label{eq:theta-formula}
p_\s=|\s-e_1|=2\sin(\frac{|\theta|}{2}) \qquad \text{for $\theta \in [-\pi,\pi]$ and $\sigma =(\cos(\theta), \sin(\theta))\in S^1$.} 
\end{equation}
As a consequence,
\begin{equation}
  \label{eq:theta-sigma-formula}
  \int_{S^1}g(p_\s)d\sigma = 2 \int_0^\pi g\bigl(2\sin(\frac{\theta}{2})\bigr)d\theta = 4 \int_0^{\frac{\pi}{2}} g(2\sin \theta) d\theta
\end{equation}
for every measurable function $g$ on $(0,2)$ for which the integral on the RHS exists in Lebesgue sense.  Then \eqref{eq:def-Lamb-2} implies that
\begin{align}\label{esgamma1}
| \L_1(\phi, s,t p)| \le  2 \|\phi \|_{C^{1,\alpha}(\R)}\,\min(1,|p|^{\alpha}|t|^{\alpha})
\end{align}
and  we have
\begin{align}\label{CoverLeb1}
&\int_{S^1} \frac{1}{p_\s}  \int_{\R} | \t|\left| \L_1(\phi, s,\t p_\s)\right| \phi(s-\t p_\s)\mathcal{K}(\phi, s,\t, \t p_\s) d\t d\s\nonumber\\
& \leq C  \int_{S^1} \frac{1}{p_\s^{1-\alpha}} d\s   \int_{\R}  \frac{|\tau|^{1+\alpha}}{\{\tau^2+ \delta^2\}^{\frac{3}{2}}}d\t,
\end{align}
where $C$ is a constant only depending on $\|\phi \|_{C^{1,\alpha}(\R)}$.  Applying   \eqref{eq:theta-sigma-formula}  with $g(x)=x^{\alpha-1}$, we find 
$$
\int_{S^1} \frac{1}{p_\s^{1-\alpha}} d\s = 2^{1+\alpha} \int_{0}^{\frac{\pi}{2}} \sin^{\alpha-1}(\theta) d\theta<\infty,
$$
since $\alpha<1$.  The  second integral in \eqref{CoverLeb1} is  also  finite and we deduce the convergence of the first integral in \eqref{expreH2}.

(ii) From Lemma \ref{lem:expreH2}, we have, for all $\lambda>0$,
\begin{equation}
  \label{eq:2pi--const-eq}
H(\lambda)(s)= -\frac{1}{2}\int_{\R} \int_{S^1} \frac{\lambda^2 }{\bigl(t^2+\lambda^2 )^{\frac{3}{2}}}  d\s dt=-2\pi\int_{0}^{+\infty}  \frac{1}{\bigl(1+t^2)^{\frac{3}{2}}} dt = -2\pi
\end{equation}
To see the last equality, we note that, by the change of variable $\tau = \frac{1}{1+t^2}$, we have, more generally, 
\begin{equation}
  \label{eq:beta-function-eq}
  \int_{0}^{+\infty}  \frac{1}{\bigl(1+t^2\bigr)^{s}} dt  = \frac{1}{2}
  \int_0^1 \tau^{s-\frac{3}{2}}(1-\tau)^{-\frac{1}{2}}d \tau =  \frac{1}{2}B(s-\frac{1}{2},\frac{1}{2}) \quad \text{for $s > \frac{1}{2}$}
\end{equation}
with the Beta-Function $B$. We shall use this identity late for different values of $s$ later. It follows from (\ref{eq:2pi--const-eq}) that $H(\lambda) \equiv -2\pi$ on $\R$ and, consequently,
\begin{equation}\label{HatCos}
\Phi(\lambda, 0) = 0 \quad \textrm{ for all }\quad \lambda> 0
\end{equation}
with $\Phi$ defined in (\ref{eq:defGGf-eq}). Hence, as pointed out already in the introduction, (\ref{eq:bif-eq}) has the trivial solution branch $\{(\lambda,0)\::\: \lambda>0\}$.
}
\end{Remark}

\section{Regularity of the operator  $H$}
\label{sec: regular}
In this section, we prove the regularity of the operator $H$ using the  expression in Lemma \eqref{lem:expreH2}. The proof follows the strategy of the argument  in \cite[Section 4]{CMT} for a nonlocal mean curvature operator, but some modifications are necessary due to the different form of the underlying operators and due to the fact that we need to show that $H$ is smooth as an operator from $C^{2, \alpha}(\R)$ to  $C^{1, \alpha}(\R)$.

\subsection{Preliminaries}\label{ss:diff-cal}

For a finite set $\cA$, we let $|\cA|$ denote the length (cardinal) of $\cA$. It will be understood that $ |\emptyset|=0$. 
Let $Z$ be a  Banach space and $U$ a nonempty open subset of $Z$. If $T \in C^{k}(U,\R)$ and $\phi \in U$, then $D^kT(\phi)$ is a continuous 
symmetric $k$-linear form on $Z$ whose norm is given by  
$$
   \|D^{k}T ({\phi}) \|= \sup_{{v}_{1}, \dots,  {v}_{k}\in Z }
     \frac{|D^{k} T ({\phi})[v_1,\dots,v_k]| }{   \prod_{j=1}^k \|   {v}_{j} \|_{ Z }}    .
$$
If  $T_1,\, T_2 \in C^k(U,\R)$,  then also $T_1 T_2 \in C^k(U,\R)$, and the $k$-th derivative of $T_1 T_2$ at $u$ is given by 
\be \label{eq:Dk-T1T2}
D^k(T_1 T_2 )({\phi})[v_1,\dots,v_k]= \sum_{\cA \in  \scrS_k} D^{|\cA|} T_1({\phi})[v_i]_{i\in \cA} \,  D^{k-|\cA|} T_2({\phi}) [v_i]_{i\in \cA^c} ,
\ee
where  $\scrS_k $ is the set of subsets of $\{1,\dots, k\} $ and  $ \cA^c= \{1,\dots, k\}\setminus \cA$ for $\cA\in \scrS_k$.  If, in particular, $L: Z\to \R$ is a linear map, we have 
\be \label{eq:Dk-LT2}
D^{k}(L T_2 )({\phi})[v_i]_{i \in \cA_k}= L({\phi})    D^{k} T_2({\phi})[v_i]_{i \in \cA_k} +  
\sum_{j=1}^k L({v}_j)   D^{k-1} T_2({\phi})  [v_i]_{\stackrel{i \in \cA_k}{i \not = j}},
\ee
where $\cA_k:= \{1,\dots,k\}$. 
 
\subsection{Regularity of the operator  $H$}\label{ss:regul-H}
To prove the regularity of the operator $H$ in (\ref{eq:def-H-intro}) on the open set $\cU$, it suffices to fix $\delta>0$
and to show the regularity of the operator 
\begin{align}\label{lem:DERVA1}
\mathcal{H}:  \cO_\d \to C^{1,\a}(\R),\qquad   \mathcal{H}(\phi)(s)&:=-\sqrt{1+(\phi')^2(s)}H(\phi)(s).
\end{align}
where $\cO_\d:= \{\phi \in C^{2,\a}_{p,e}(\R)\::\: \min_{\R} \phi > \delta\}$. The following is the main result of this section.

\begin{Proposition}
\label{prop:smooth-ovH}
For every  $\d>0$, the map  $\cH: \cO_{\d} \subset C^{2,\a}(\R) \to C^{1, \a}(\R)$ defined by \eqref{lem:DERVA1} is of class $C^\infty$, and for every  $k\in\N$, and $v_1,\dots,v_k \in C^{2,\a}(\R)$, we have
 \begin{align}
D^k   \mathcal{H} ({\phi})[v_1,\dots,v_k]  (s)&=\int_{S^1} \frac{1}{p_\s}  \int_{\R}  D^k_\phi \mathcal{M}({\phi} , s,t,p_\s)[v_1,\dots,v_k]\,dt d\s  \label{eq:DkH-in-integ} \\
 &+\frac{1}{2} \int_{S^1} \int_{\R}D^k_\phi \mathcal{\ov{M}}({\phi},s,t,p_\s)[v_1,\dots,v_k]\,dt d\s,
\end{align}
where $\mathcal{M}$ and $\mathcal{\ov{M}}$ are defined in (\ref{eq:def-M-bar-M}).
\end{Proposition}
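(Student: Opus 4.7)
The plan is to verify the formula \eqref{eq:DkH-in-integ} by induction on $k$, justifying the differentiation under the integral sign through the construction of integrable majorants that are locally uniform in $\phi \in \cO_\delta$. Since the two terms in \eqref{expreH2} are treated analogously, I would focus the exposition on the more singular first term $\frac{1}{p_\sigma}\int_\R \cM(\phi,s,t,p_\sigma)\,dt\,d\sigma$, which is where all the work occurs. The second term, whose integrand $\ov{\cM}$ contains the additional factor $\phi(s-tp)$ and no $\frac{1}{p_\sigma}$ weight, is handled by simpler variants of the same estimates.

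Fix $\phi_0 \in \cO_\d$ and set $\d_0 := \min_\R \phi_0 - \d >0$. Throughout, I would work on a bounded neighborhood $\cV \subset \cO_{\d/2}$ of $\phi_0$ in $C^{2,\a}(\R)$ where $\min_\R \phi \geq \d/2$ and $\|\phi\|_{C^{2,\a}} \leq M$ for a uniform $M$. The strategy is:

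\textbf{Step 1 (pointwise derivatives of the integrand).} For fixed $(s,t,\sigma)$, the map $\phi \mapsto \cM(\phi,s,t,p_\s)$ is smooth on $\cV$ because it is a finite product/quotient of the expressions $\phi(s)$, $\phi(s-tp_\s)$, $\phi'(s-tp_\s)$, $\L_0(\phi,s,tp_\s)$, $\L_1(\phi,s,tp_\s)$, and $\sqrt{1+(\phi')^2(s-tp_\s)}$, combined via $\cK$. Each of these depends affinely (or, in the case of the square root and the cubic denominator, smoothly with denominator bounded below by $(\d/2)^3$) on the values and first derivatives of $\phi$. Using \eqref{eq:Dk-T1T2} and \eqref{eq:Dk-LT2} repeatedly, $D^k_\phi \cM(\phi,s,t,p_\s)[v_1,\dots,v_k]$ can be written as a finite sum of products involving (at most $k$) factors of the form $v_i(s-tp_\s)$, $v'_i(s-tp_\s)$, $D_\phi\L_0(\phi,\cdot)[v_i] = \L_0(v_i,\cdot)$, $D_\phi\L_1(\phi,\cdot)[v_i] = \L_1(v_i,\cdot)$, and powers of the uniformly bounded denominator of $\cK$.

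\textbf{Step 2 (integrable majorant).} The key estimate is the Hölder bound \eqref{esgamma1}, which applied to each $v_i$ gives $|\L_1(v_i,s,tp_\s)| \leq 2 \|v_i\|_{C^{1,\a}} \min(1,|tp_\s|^\a)$. Because $\cM$ carries at least one factor $t\,\L_1(\phi,s,tp_\s)$ (inherited from its definition), this factor survives in every term of $D^k_\phi \cM[v_1,\dots,v_k]$ even after differentiation: indeed, $\L_1$ is linear in its first argument, so differentiating it in $\phi$ yields $\L_1(v_i,\cdot)$, which is of the same form. Combined with the bound $\cK(\phi,s,t,\xi) \leq C(t^2+(\d/2)^2)^{-3/2}$ and its derivative analogues, every term in $D^k_\phi \cM(\phi,s,t,p_\s)$ is majorized by
\[
C \,\frac{|t|^{1+\a} p_\s^{\a}}{(t^2+(\d/2)^2)^{3/2}}\,\prod_{i=1}^k \|v_i\|_{C^{2,\a}},
\]
with $C = C(\d,M)$. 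Dividing by $p_\s$ and using \eqref{eq:theta-sigma-formula} with $g(x) = x^{\a-1}$, the majorant is in $L^1(\R\times S^1)$. Dominated convergence then justifies differentiation under the integral, establishing \eqref{eq:DkH-in-integ}.

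\textbf{Step 3 (target regularity $C^{1,\a}$).} It remains to show that the RHS of \eqref{eq:DkH-in-integ} lies in $C^{1,\a}(\R)$ as a function of $s$ and that it depends continuously on $\phi \in \cV$. For continuity in $s$ and differentiability in $s$, I would differentiate once more under the integral; the $s$-derivative acts on $\phi(s)$, $\phi(s-tp_\s)$ and its derivatives, producing bounded factors (since $\phi \in C^{2,\a}$ leaves one derivative to spare), while $\partial_s \L_1(\phi,s,tp_\s)$ again admits a bound of order $\min(1,|tp_\s|^\a)$ because $\phi' \in C^{1,\a}$. The resulting integrand is still majorized by the same type of function as in Step~2. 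For the Hölder continuity of $\partial_s$, I would split the integration into the near-regime (say $|t|\leq 1$) and the far-regime ($|t|\geq 1$): in the far-regime the integrand is smooth in $s$ with decay $|t|^{-3}$, while in the near-regime one uses the standard increment estimate, exploiting the $C^{0,\a}$ continuity of $\phi''$ together with the surviving $\L_1$-type cancellation.

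The main obstacle is Step~3: one must show that the $\L_1$-cancellation, which ensures integrability in the singular variable $p_\s \to 0$, survives through the extra $s$-differentiation and the Hölder increment estimate. This is handled by the now-standard device (as used in \cite{CMT}) of writing $\phi(s_1)-\phi(s_2) = \int_0^1 \phi'(\cdot)ds$ and redistributing one derivative onto the smooth factors while keeping a $\L_1$-type object intact; the fact that we start with $\phi \in C^{2,\a}$ but only demand $C^{1,\a}$ regularity of the output provides exactly the room needed for this redistribution.
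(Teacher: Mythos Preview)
Your Steps 1 and 2 are sound: the structure $\cM = t\L_1(\phi,\cdot)\cK$ with $\L_1$ linear in its first argument does guarantee that every term of $D^k_\phi\cM$ carries a factor of the form $t\L_1(w,s,tp_\sigma)$ with $w\in\{\phi,v_1,\dots,v_k\}$, and the majorant you write down is integrable and justifies \eqref{eq:DkH-in-integ} pointwise.

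The gap is in Step~3, and it is not a minor technicality. After one $s$-derivative the critical term is $\frac{1}{p_\sigma}\,t\,\L_1(\phi',s,tp_\sigma)\,\cK$, where now the $\L_1$-factor is built from $\phi''\in C^{0,\alpha}$ only. For this factor one has merely
\[
|[\L_1(\phi',\cdot,tp);s,\bar s]|\le C\min(|tp|^{\alpha},|s-\bar s|^{\alpha}),
\]
and combining this with the weight $1/p_\sigma$ gives, after integration, a bound of order $|s-\bar s|^{\alpha}\log(1/|s-\bar s|)$ --- a logarithmic loss. Your proposed splitting in $|t|$ cannot remove this loss, because the singularity responsible for it sits in the $p_\sigma$-variable, not in $t$: the integral $\int_{S^1}p_\sigma^{-1}\,d\sigma$ diverges, and no amount of $t$-decay fixes that.

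The paper resolves this by a device you do not use: it rewrites $t\L_1(\phi,s,tp)=\frac{1}{2p}\bigl(L_e(\phi,s,tp)+L_o(\phi,s,tp)\bigr)$ with the \emph{second} differences $L_e(\phi,s,\tau)=2\phi(s)-\phi(s-\tau)-\phi(s+\tau)$ and $L_o(\phi,s,\tau)=\phi(s+\tau)-\phi(s-\tau)-2\tau\phi'(s)$. The point is that $L_e,L_o$ are expressed through $\phi$ itself rather than $\phi'$; hence for $\psi\in C^{1,\alpha}$ one obtains increment bounds carrying an \emph{extra} factor $|tp|$, e.g.\ $|[L_e(\psi,\cdot,tp);s,\bar s]|\le C|tp|\min(|tp|^\alpha,|s-\bar s|^\alpha)$ and $|[L_e(\psi,\cdot,tp);s,\bar s]|\le C|s-\bar s|\,|tp|^\alpha$. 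With the weight now $1/p_\sigma^{2}$, the paper's Lemma~\ref{lem:I_eI_o} splits the $\sigma$-integral at $p_\sigma=|s-\bar s|$ and uses the first bound for $p_\sigma\le|s-\bar s|$ and the second for $p_\sigma>|s-\bar s|$; both pieces give exactly $|s-\bar s|^\alpha$ with no log. After $\partial_s$ one faces $L_e(\phi',\cdot)$ with $\phi'\in C^{1,\alpha}$, and the same argument goes through --- this is precisely the ``room'' between $C^{2,\alpha}$ input and $C^{1,\alpha}$ output, but it is accessed through the $p_\sigma$-splitting and the second-difference structure, not through a $|t|$-splitting. Your invocation of the CMT device is not enough here; as the paper notes, modifications beyond \cite{CMT} are needed because the kernel is at the borderline order and the map loses a full derivative.
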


The remainder of this section is devoted to the proof of Proposition~\ref{prop:smooth-ovH}. For this, we need to decompose the terms appearing in the integral formula (\ref{expreH2}) for the operator $\cH$ in a suitable way into even and odd parts which we then estimate separately. For this, we first note that
$$
t \L_1(\phi,t,t p) = \frac{1}{2p}\bigl(L_e(\phi,t,tp)+L_o(\phi,t,tp)\bigr)
$$
for $\phi \in \cO_\d$, $s,t \in \R$, $p \ge 0$ with
\be
\label{eq:defLe_Lo}
\begin{aligned}
L_e(\phi,s,t )&= t \Bigl(\L_1(\phi,s,t)+\L_1(\phi,s,-t)+2 \phi'(s)\Bigr)= 2\phi(s)-\phi(s-t)-\phi(s+t),\\  
L_0(\phi,s,t)&= t \Bigl(\L_1(\phi,s,t)-\L_1(\phi,s,-t)-2 \phi'(s)\Bigr)= \phi(s+t)-\phi(s-t)-2 t\phi'(s) 
\end{aligned}
\ee
and thus, by Lemma \ref{lem:expreH2},
\begin{align}\label{lem:DERV2}
 &\mathcal{H}(\phi)(s)= \frac{1}{2} \int_{S^1} \frac{1}{p_\s^2}  \int_{\R} \Bigl( L_e(\phi,s,t p_\s)+L_o(\phi,s,t p_\s)\Bigr)  \mathcal{K}(\phi, s,t,tp_\s) dt d \s \nonumber\\
                      &+ \frac{1}{2} \int_{S^1}  \int_{\R} \phi(s-t p_\s) \mathcal{K}(\phi, s,t,tp_\s)dt d \s, \nonumber \\
                     &= \frac{1}{2} \int_{S^1} \frac{1}{p_\s^2}  \int_{\R} L_e(\phi,s,t p_\s)\mathcal{K}(\phi, s,t,t p_\s) dt d \s  \\
& +\frac{1}{2}  \int_{S^1} \frac{1}{p_\s^2}  \int_{\R} L_o(\phi,s,tp_\s)\mathcal{K}_o(\phi, s,t,t p_\s)dt d \s \nonumber 
  + \frac{1}{2} \int_{S^1}  \int_{\R} \phi(s-t p_\s) \mathcal{K}(\phi, s,t,t p_\s)dt d \s, 
\end{align}
where $\mathcal{K}$ is defined in (\ref{eq:def-K}) and $\mathcal{K}_o$ is, the odd part of $\mathcal{K}$ in its last variable,  given by
\begin{align}
  \mathcal{K}_o(\phi, s,t, \xi)&= \frac{1}{2}\Bigl(\mathcal{K}(\phi, s, t, \xi)-\mathcal{K}(\phi, s,t,-\xi)\Bigr) . \label{eq:def-K-o}
\end{align}

We have the following result.
\begin{Lemma}\label{lem:I_eI_o}
Let $m\in \{0,1\}$,  $\mu\ \in C^{m,\a}(\R^2)$ and  $\nu\ \in C^{m,\a}(\R^3)$  such that  $\nu(\cdot,\cdot, 0 )=0$ and  for all $i\leq m$, 
\be\label{eq:desimunu}
\|\de_s^i\mu (\cdot,t)\|_{C^{\a}(\R)}  + \|\de^i_{s} \nu (\cdot,t,\cdot )\|_{C^{\a}(\R^2)}\leq \frac{C_0}{(1 + t^2)^{3/2}}.
\ee
 Define, for $\phi\in C^{m+1,\a}(\R)$, 
$$
I_e(s):= \int_{S^1}\frac{1}{p_\s^2}\int_{\R} L_e(\phi,s,t p_\s)\mu(s,t) dt d\s 
$$
and 
$$
 I_o(s):=\int_{S^1}\frac{1}{p_\s^2}\int_{\R} L_o(\phi,s,t p_\s)\nu(s, t,tp_\s) dt d\s. 
$$
Then
$$
\|I_e\|_{C^{m,\a}(\R)}+ \|I_o\|_{C^{m,\a}(\R)}  \leq C C_0 \|\phi \|_{C^{m+1,\a}(\R)},
$$
for some $C=C(m,\a)$.
\end{Lemma}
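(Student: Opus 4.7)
The plan is to organize the proof around four tasks: an $L^\infty$ estimate on $I_e,I_o$; the Hölder estimate in the case $m=0$; a reduction of the $m=1$ case to the $m=0$ case by differentiating under the integral; and the analogous analysis for $I_o$ in which the hypothesis $\nu(\cdot,\cdot,0)=0$ plays a role. Throughout I write $h=|s_1-s_2|$ and systematically use the substitution $\tau=t p_\s$.

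For the pointwise bounds, I would exploit the second-difference structure of $L_e$ and $L_o$. Writing $L_e(\phi,s,\tau) = -\int_0^\tau[\phi'(s+r)-\phi'(s-r)]\,dr$ and using $|\phi'(s+r)-\phi'(s-r)| \le (2r)^\alpha[\phi']_{C^\alpha}$, one obtains the Taylor-type estimate $|L_e(\phi,s,\tau)| \le C|\tau|^{1+\alpha}\|\phi\|_{C^{1,\alpha}}$. The identity $L_o(\phi,s,t)=-\int_0^t L_e(\phi',s,r)\,dr$ together with $|L_e(\phi',s,r)|\le 2|r|^\alpha[\phi']_{C^\alpha}$ gives the same type of bound for $L_o$. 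Dividing by $p_\s^2$ and integrating against $\mu$ (resp.\ $\nu$), the $\sigma$-integral reduces to $\int_{S^1} p_\s^{\alpha-1}\,d\s<\infty$ via \eqref{eq:theta-sigma-formula} (since $\alpha\in(0,1)$), and the $t$-integral converges thanks to the decay assumption \eqref{eq:desimunu}.

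The heart of the proof is the Hölder estimate for $m=0$. I split $I_e(s_1)-I_e(s_2)=A+B$, where $A$ integrates $[L_e(\phi,s_1,tp_\s)-L_e(\phi,s_2,tp_\s)]\mu(s_1,t)$ and $B$ integrates $L_e(\phi,s_2,tp_\s)[\mu(s_1,t)-\mu(s_2,t)]$. Term $B$ yields $h^\alpha$ directly from the Hölder decay of $\mu(\cdot,t)$ combined with the Taylor bound on $L_e$. For $A$ I combine two bounds on the increment of $L_e$:
\begin{equation*}
\begin{aligned}
\text{(i)}\quad & |L_e(\phi,s_1,\tau)-L_e(\phi,s_2,\tau)| \le 2C\|\phi\|_{C^{1,\alpha}}|\tau|^{1+\alpha} \text{ (triangle inequality),}\\
\text{(ii)}\quad & |L_e(\phi,s_1,\tau)-L_e(\phi,s_2,\tau)| \le 2h\,|\tau|^\alpha\,\|\phi\|_{C^{1,\alpha}} \text{ (MVT, using }\partial_s L_e=L_e(\phi',\cdot,\tau)\text{).}
\end{aligned}
\end{equation*}
I then split the $(\s,t)$ domain at $|tp_\s|=h$, using bound (i) on $\{|tp_\s|\le h\}$ and (ii) on $\{|tp_\s|>h\}$. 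A routine case analysis according to whether $p_\s\le h$ or $p_\s>h$ shows that each piece contributes $C C_0\|\phi\|_{C^{1,\alpha}} h^\alpha$; in particular, bound (ii) produces the factor $p_\s^{\alpha-2}$ from $1/p_\s^2$, and $\int_{p_\s\le h}p_\s^{\alpha-2}\,d\s\cdot h^{-(2-\alpha)}$ together with $\int_{p_\s>h}p_\s^{\alpha-2}\,d\s\sim h^{\alpha-1}$ combine with the leading $h$ factor to give exactly $h^\alpha$ without logarithmic loss.

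For $m=1$ I differentiate under the integral, which is justified once the $m=0$ analysis is in hand. Using $\partial_s L_e(\phi,s,\tau) = L_e(\phi',s,\tau)$, $\partial_s I_e$ splits into two integrals of the same form treated in step (b), with $(\phi,\mu)$ replaced by $(\phi',\mu)$ and by $(\phi,\partial_s\mu)$ respectively; both fit the $m=0$ hypotheses (here $\phi'\in C^{1,\alpha}$ because $\phi\in C^{2,\alpha}$, and $\partial_s\mu$ inherits the required decay from \eqref{eq:desimunu}), yielding $I_e\in C^{1,\alpha}$. The argument for $I_o$ runs identically with $L_o$ replacing $L_e$ and $\nu(s,t,tp_\s)$ replacing $\mu(s,t)$; the vanishing $\nu(\cdot,\cdot,0)=0$ provides the estimate $|\nu(s,t,\xi)|\le C_0|\xi|^\alpha/(1+t^2)^{3/2}$ needed to control the $\nu$-increment in the analogue of term $B$. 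The main obstacle is the sharpness of the $h^\alpha$ bound in step (b): the two naive approaches (interpolation between (i) and (ii), or either bound alone) both fail, and one must couple the split at $|tp_\s|=h$ with the comparison between $p_\s$ and $h$ to avoid a spurious logarithmic factor.
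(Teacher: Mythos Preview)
Your treatment of $I_e$ is correct and your splitting at $|tp_\sigma|=h$ (with the secondary case analysis $p_\sigma\lessgtr h$) indeed delivers $h^\alpha$ without logarithmic loss. The paper takes a slightly leaner route --- it splits only the $\sigma$-integral at $p_\sigma=h$ and uses the bounds $|L_e(\phi,s,tp)-L_e(\phi,\bar s,tp)|\le C|p|\min(h^\alpha,|p|^\alpha)\min(|t|,|t|^{1+\alpha})$ on $\{p_\sigma\le h\}$ and $|L_e(\phi,s,tp)-L_e(\phi,\bar s,tp)|\le Ch|t|^\alpha|p|^\alpha$ on $\{p_\sigma\ge h\}$ --- but both approaches work.

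Your treatment of $I_o$, however, has a genuine gap. The claim that the argument ``runs identically'' fails at the analogue of bound~(ii). For $L_e$ the mean value theorem is available because $\partial_s L_e(\phi,s,\tau)=L_e(\phi',s,\tau)$ exists for $\phi\in C^{1,\alpha}$ and satisfies $|L_e(\phi',s,\tau)|\le C|\tau|^\alpha$. For $L_o$ one has $\partial_s L_o(\phi,s,\tau)=\phi'(s+\tau)-\phi'(s-\tau)-2\tau\phi''(s)$, which requires $\phi\in C^2$, unavailable at $m=0$. The direct increment bound (without MVT) is only
\[
|L_o(\phi,s_1,\tau)-L_o(\phi,s_2,\tau)|\le C\bigl(h|\tau|^\alpha+|\tau|h^\alpha\bigr),
\]
the second contribution coming from the explicit term $-2\tau\phi'(s)$ in $L_o$. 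Feeding the $|\tau|h^\alpha$ part into your splitting leaves a factor $p_\sigma^{-1}$ in the $\sigma$-integrand and produces exactly the logarithmic divergence you were trying to avoid.

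The hypothesis $\nu(\cdot,\cdot,0)=0$ is what rescues this, but it must be used in term~$A$, not term~$B$: combining $|L_o(\phi,s_1,\tau)-L_o(\phi,s_2,\tau)|\le C|\tau|h^\alpha$ with $|\nu(s,t,tp_\sigma)|\le C_0|tp_\sigma|^\alpha/(1+t^2)^{3/2}$ gives an integrand bounded by $Ch^\alpha|t|^{1+\alpha}p_\sigma^{\alpha-1}(1+t^2)^{-3/2}$, which integrates directly --- no splitting needed. This is how the paper handles $I_o$. Term~$B$, on the other hand, is already controlled by $|L_o|\le C|\tau|^{1+\alpha}$ together with the H\"older bound on $\nu$ in $s$; the vanishing at $\xi=0$ plays no role there.
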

\begin{proof}
We consider the case $m=0$ as the proof will be similar in the case $m=1$. By linearity, we assume that  $ \|\phi \|_{C^{1,\alpha}(\R)}\leq 1$. We start with $I_e$. We have  
\be \label{eq:Le1}
|L_e(\phi,s,tp)|\leq  4    |t|^{1+\a} |p|^{1+\a} ,
\ee
\be \label{eq:Le2}
|L_e(\phi,s,tp)- L_e(\phi,\ov s,tp)|\leq  4  |p|\min (|s-\ov s|^\a, |p|^\a) \min(|t|, |t|^{1+\a}) 
\ee
and 
\be \label{eq:Le4}
|L_e(\phi,s,tp)- L_e(\phi,\ov s,tp)|\leq  4   |s-\ov s| |t|^{\a} |p|^{\a}.
\ee
Now using  \eqref{eq:Le1} and \eqref{eq:desimunu}, we get 
\be\label{eq:Le3}
\int_{\R}  |L_e(\phi,s,tp)| \mu(s,t)\, dt \leq  C C_0 |p|^{1+\a}  \int_{\R}  \frac{|t|^{1+\a}dt}{(1 + t^2)^{3/2}}\leq C C_0   |p|^{1+\a} .
\ee
From this,    we deduce that 
\begin{align}\label{eq:Ies1}
|I_e(s)|\leq  C C_0\int_{S^1} p_\s^{-1+\a} d\s\leq C C_0  .
\end{align}
Similarly, using  \eqref{eq:uv-s_1s_2}, \eqref{eq:Le2}, \eqref{eq:Le3}, \eqref{eq:Le4}  and  \eqref{eq:desimunu}, we have 
\begin{align}\label{eq:keyesti}
&|I_e(s) - I_e(\ov s)  |\leq C C_0\int_{\{\s\in S^1,\,\, p_\s \leq |s-\ov s|\}}p_\s^{-1}    \min (|s-\ov s|^\a,  p_\s^\a)  d\s   \int_{\R}  \frac{|t|^{1+\a}dt}{(1 + t^2)^{3/2}}\nonumber\\
&+ C C_0 |s-\ov s|  \int_{\{\s\in S^1,\,\, p_\s \geq |s-\ov s|\}}   p_\s^{-2+\a} d\s     \int_{\R}  \frac{|t|^{1+\a}dt}{(1 + t^2)^{3/2}}\nonumber\\
&+ C C_0 |s-\ov s|^\a \int_{S^1} p_\s^{-1+\a} d\s  \int_{\R}  \frac{|t|^{1+\a}dt}{(1 + t^2)^{3/2}}\nonumber\\
&\leq  CC_0  \left(  |s-\ov s|^\a +  |s-\ov s| \right).
\end{align}
Combining  \eqref{eq:keyesti} with \eqref{eq:Ies1} we get 
\be\label{eq:i_e-ok}
\| I_e\|_{C^{\a}(\R)}\leq C C_0. 
\ee 
To estimate $I_o$, we write  $L_o(\phi, s,tp)=tp \int_{-1}^1\{\phi'(s+\t tp)-\phi'(s)\}\,d\t$ so that 
\begin{align}\label{esgamma22}
  |L_o(\phi, s,tp)| &\le  4  |t|^{1+\a} |p|^{1+\a} 
\end{align}
 and 
\begin{align}\label{eq:Phis1s2}
|  L_o(\phi, s,t p)- L_o(\phi, \ov s,tp)| & \leq  4   |t| |p|   |s-\ov s|^\a. 
\end{align}
On the other hand, since $\nu(s,t, 0)=0$, we get from  \eqref{eq:desimunu}, 
\be\label{eq:nu1}
|\nu(s,t, t p) |\leq C C_0   \frac{ p^\a  |t|^\a  }{(1 + t^2)^{3/2}}
\ee
and 
\be \label{eq:nu2}
| \nu(s, t,t p)-\nu(\ov s,t, t p) |\leq C C_0 \frac{  |s-\ov s|^\a  }{(1 + t^2)^{3/2}}   .
\ee 
Now using  \eqref{esgamma22} and \eqref{eq:desimunu}, we obtain
\begin{align}\label{I_o-L-inf}
|I_0(s)|\leq C C_0 \int_{S^1} p_\s^{-1+\a} \, d\s \int_{\R}  \frac{|t|^{1+\a}dt}{(1 + t^2)^{3/2}}\leq C C_0.
\end{align}
Moreover, using  \eqref{esgamma22}, \eqref{eq:Phis1s2},   \eqref{eq:nu1} and  \eqref{eq:nu2}, we get 
\begin{align*}
&|I_0(s)-I_0(\ov s)|\leq C C_0  |s-\ov s|^\a \int_{S^{1}} p_\s^{-1+\a} \int_{\R}  \frac{|t|^{1+\a}dt}{(1 + t^2)^{3/2}}
\leq C C_0  |s-\ov s|^\a .
\end{align*}
From this and \eqref{I_o-L-inf}, we obtain the estimate of $I_o$ and thus  by \eqref{eq:i_e-ok} the proof is complete. 
\QED
\end{proof}

The following Lemma provides  some estimates related to the kernel ${\cK}$ defined in (\ref{eq:def-K}). 
For a function $u: \R \to \R$, we  also set 
$$
[u; s_1,s_2]:= u(s_1)-u(s_2)\qquad \text{for $s_1,s_2 \in \R$,} 
$$
and we note that 
\be\label{eq:uv-s_1s_2} 
[u w; s_1,s_2] = [u;s_1,s_2]w(s_1) + u(s_2)[w;s_1,s_2] \qquad \text{for $u,w: \R \to \R$, $s_1,s_2 \in \R$.}
\ee 
With this, we have the following  estimate.
\begin{Lemma}\label{lem:est-cK} 
Let $k \in \N \cup \{0\}$,  $i\in \{0,1\}$ and $\cK$ defined in \eqref{eq:def-K}. Then, there exists a constant $c=c(\alpha,k,\d)>1$ such that  for all $(s,s_1,s_2,t,p,p')\in\R^6$ and ${\phi}\in \cO_\d$, we have 
   \be \label{eq:Dk-K-s}
\|\de_s^i  D_{\phi}^k {\cK}({\phi},s,t,tp   )   \|\leq   
\frac{c(1+ \|{\phi}\|_{C^{i+1,\a}(\R)} )^{c}   }{(1 + t^2)^{3/2}},
 \ee 
   \be  \label{eq:Dk-K-s_1s_2}
\| [ \de_s^i D_{\phi}^k {\cK}({\phi},\cdot,t ,tp);s_1,s_2] \|\leq   
\frac{c(1+ \|{\phi}\|_{C^{i+1,\a}(\R)} )^{c}   \, |s_1-s_2|^\a }{    (1 + t^2)^{3/2}}
 \ee
 and 
    \be  \label{eq:Dk-K-ov-p}
\| [ \de_s^i  D_{\phi}^k {\cK}({\phi},s,t , \cdot); \xi,\xi'] \|\leq   
\frac{c(1+ \|{\phi}\|_{C^{i+1,\a}(\R)} )^{c}   \, |\xi-\xi'|^\a   }{    (1 + t^2)^{3/2}}.     
 \ee
 \end{Lemma}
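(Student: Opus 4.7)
My plan is to exploit the simple lower bound on the denominator to extract the decay and then reduce everything to algebra with the product and chain rules. Writing $\cK(\phi,s,t,\xi) = A(\phi,s,\xi)/B(\phi,s,t,\xi)^{3/2}$ with
\[
A(\phi,s,\xi) = \phi(s-\xi)\sqrt{1+(\phi')^2(s-\xi)}, \qquad B(\phi,s,t,\xi) = t^2 + t^2 \L_0^2(\phi,s,\xi) + \phi(s)\phi(s-\xi),
\]
the condition $\phi > \d$ on $\cO_\d$ yields the uniform lower bound
\[
B(\phi,s,t,\xi) \ge t^2 + \d^2 \ge \tfrac{1}{2}\min(1,\d^2)(1+t^2),
\]
so $B^{-3/2-j} \le c(\d,j)(1+t^2)^{-3/2-j}$ for every $j \ge 0$. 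This is the sole source of the $(1+t^2)^{-3/2}$ decay appearing in all three estimates.

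To prove \eqref{eq:Dk-K-s} I would proceed by induction on $k$, applying \eqref{eq:Dk-T1T2} and \eqref{eq:Dk-LT2} together with the chain rule for the smooth composites $r \mapsto \sqrt{1+r^2}$ (at $\phi'(s-\xi)$) and $r \mapsto r^{-3/2}$ (at $B$). Since $\phi$ enters $A$ and $B$ only linearly, through $\phi(s)$, $\phi(s-\xi)$, $\phi'(s-\xi)$ and $\L_0(\phi,s,\xi)$, the derivative $D_\phi^k \cK(\phi,s,t,tp)[v_1,\dots,v_k]$ decomposes into a finite sum of terms of the form
\[
\frac{t^{2m}\, P(\phi,s,tp)\,Q_k(v_1,\dots,v_k)}{B(\phi,s,t,tp)^{3/2+j}}, \qquad 0 \le m \le j \le k,
\]
where $P$ is a polynomial in evaluations of $\phi, \phi', \L_0$ and $Q_k$ is $k$-linear with each factor controlled by $\|v_\ell\|_{C^{1,\a}}$. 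The inequality $m \le j$ is inherited from the fact that every differentiation of $B^{-1}$ simultaneously raises the denominator exponent by $1$ and contributes at most one factor of $t^2$ (from the $t^2 \L_0$ piece of $\partial_\phi B$). Consequently $t^{2m} B^{-3/2-j} \le c(1+t^2)^{-3/2}$, and taking the supremum over unit $v_\ell$ yields \eqref{eq:Dk-K-s} with $i=0$. The case $i=1$ follows by first applying $\partial_s$, which introduces $\phi''(s-tp)$ and $\phi'(s)$ terms and accounts for the $\|\phi\|_{C^{2,\a}}$-dependence.

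For the H\"older bounds \eqref{eq:Dk-K-s_1s_2} and \eqref{eq:Dk-K-ov-p} I would apply the increment formula \eqref{eq:uv-s_1s_2} termwise in the above expansion, transferring the increment onto a single factor. Increments of $\phi, \phi', \phi''$ are controlled by the corresponding H\"older seminorms times $|s_1-s_2|^\a$; increments of $\L_0$ inherit the same bound through its integral representation $\L_0(\phi,s,\xi) = \int_0^1 \phi'(s-\rho\xi)\,d\rho$; and the increment of $B^{-3/2-j}$ is handled by the mean value theorem together with the uniform lower bound on $B$, noting that $[B;\cdot]$ has at most one factor of $t^2$ which is absorbed exactly as in the pointwise bound. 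The $\xi$-estimate \eqref{eq:Dk-K-ov-p} is strictly analogous, since all $\xi$-dependence of $\cK$ sits in the arguments $s-\xi$. The main obstacle is nothing conceptual, but rather the combinatorial bookkeeping of the induction: verifying that in every term produced by the chain rule the exponent $m$ of $t^2$ in the numerator never exceeds the denominator shift $j$. This amounts to a simple tally of how often $B^{-1}$ is differentiated versus how often a numerator factor is, and requires no estimate beyond the uniform positivity of $B$.
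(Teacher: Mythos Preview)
Your approach is correct and essentially coincides with the paper's. The paper factors $\cK(\phi,s,t,\xi)=\phi(s-\xi)\sqrt{1+(\phi')^2(s-\xi)}\,\widetilde{\cK}(\phi,s,t,\xi)$ with $\widetilde{\cK}=B^{-3/2}$ (your notation), reduces via the product rule \eqref{eq:Dk-T1T2} to estimating $\widetilde{\cK}$, and then refers to \cite[Lemma~4.1]{CMT} for the inductive computation; for $i=1$ it computes $\partial_s\widetilde{\cK}= \widehat Q(\phi,s,t,\xi)\,B^{-5/2}$ explicitly and invokes the same argument. Your proposal spells out the mechanism behind that reference: the uniform lower bound $B\ge t^2+\d^2\ge c_\d(1+t^2)$ supplies the decay, and the Fa\`a~di~Bruno/Leibniz bookkeeping ensures that each differentiation of $B^{-3/2-\ell}$ raises the denominator exponent by one while contributing at most one $t^2$ factor (from the $t^2\L_0^2$ piece of $D_\phi B$ or $\partial_s B$), whence $t^{2m}B^{-3/2-j}\le c(1+t^2)^{-3/2}$ whenever $m\le j$. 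The H\"older increments are handled identically in both treatments via \eqref{eq:uv-s_1s_2}. One minor imprecision: $\phi$ does not enter $A$ and $B$ \emph{linearly} (both are quadratic in the evaluations of $\phi,\phi',\L_0$), but your intended meaning---that the dependence is polynomial in these linear building blocks, so that the chain rule terminates after finitely many steps---is what the argument actually uses.
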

 \begin{proof}
Since the proof is similar to the one of \cite[Lemma 4.1]{CMT}, we will only sketch it. Throughout this proof, the letter $c$ stands for different constants greater than one and depending only on $\alpha, k$, and $\d$.
By definition, we have 
\begin{align}\label{eq:q2}
  { \mathcal{K}}({\phi} , s,t, \xi) =\phi(s-\xi)\sqrt{1+(\phi')^2(s-\xi)}\, \mathcal{\widetilde K}({\phi} , s, t, \xi)
\end{align}
with
$$
\mathcal{\widetilde K}({\phi} , s, t,\xi ) = \frac{1}{\{t^2+ t^2 \L^2_0(\phi,s, \xi)+ \phi(s)\phi(s-\xi)\}^{\frac{3}{2}}}. 
$$
By \eqref{eq:Dk-T1T2}, it is then enough to prove the estimates for $\mathcal{\widetilde K}$ in place of $\cK$.
Following the same argument as in the proof of   \cite[Lemma 4.1]{CMT}, we get \eqref{eq:Dk-K-s}, \eqref{eq:Dk-K-s_1s_2} and  \eqref{eq:Dk-K-ov-p} for $i=0$.
Now, for $i=1$, by direct computations, we have 
$$
 \de_s \mathcal{\widetilde K}(\phi,s, t,\xi)= \widehat{Q}(\phi,s,t, \xi)  \frac{1}{\{t^2+ t^2 \L^2_0(\phi,s,\xi)+ \phi(s)\phi(s-\xi)\}^{\frac{5}{2}}} 
$$
 with 
 $$
 \widehat{Q}(\phi,s,t, \xi):=-\frac{3}{2} t^2 D_\phi\L^2_0(\phi,s, \xi)[\phi']+ \phi'(s)\phi(s-\xi)+ \phi(s)\phi'(s-\xi)
 $$
By  \eqref{eq:Dk-T1T2} and the argument in the proof of  \cite[Lemma 4.1]{CMT}, we also get  the estimates of $\de_s D_\phi^k\mathcal{\widetilde K}$, so that  \eqref{eq:Dk-K-s}, \eqref{eq:Dk-K-s_1s_2} and  \eqref{eq:Dk-K-ov-p}  hold for $i=1$.
\QED
\end{proof}

We now  use (\ref{lem:DERV2}) to write 
\begin{align}
\mathcal{H}(\phi)(s)&= \frac{1}{2} \int_{S^1} \frac{1}{p_\s^2}  \int_{\R} \Bigl(\mathcal{M}_e(\phi, s,t, p)+\mathcal{M}_o(\phi, s,t, p)\Bigr)dt d \s \nonumber\\
&+ \frac{1}{2} \int_{S^1}  \int_{\R} \phi(s-t p_\s) \mathcal{K}(\phi, s,t,tp_\s)dt d \s, \label{lem:DERV2-1}
\end{align}
with $\mathcal{M}_e, \mathcal{M}_o: \cO_\d \times \R^3    \to \R$ defined by 
\be\label{eq:cMe} 
\mathcal{M}_e(\phi, s,t, p):=L_e(\phi, s,t p) {\mathcal{K}}(\phi, s,t, tp),  
\ee
\be \label{eq:cMo}  
\mathcal{M}_o(\phi, s,t, p):=L_o(\phi, s,t p) {\mathcal{K}}_o(\phi, s,t, tp),  
\ee
where  ${\mathcal{K}}$ is given in (\ref{eq:def-K}) and ${\mathcal{K}}_o$ is given in (\ref{eq:def-K-o}).
By \eqref{eq:Dk-LT2} we have
\begin{align}\label{eq:diffcM123-e}
&D^{k}_{\phi} \mathcal{M}_e(\phi, s,t, p)[v_i]_{i \in \cA_k}\\
  &=  L_e({\phi},s,tp)    D^{k}_{\phi} \cK({\phi},s,t,tp)[v_i]_{i \in \cA_k}+  \sum_{j=1}^k L_e({v}_{j},s,tp)   D^{k-1}_{\phi}  \cK({\phi},s,t,tp) 
[v_i]_{\stackrel{i \in \cA_k}{ i\neq j}}\nonumber
 \end{align}
and 
\begin{align}\label{eq:diffcM123-o}
&D^{k}_{\phi} \mathcal{M}_o(\phi, s,t, p)[v_i]_{i \in \cA_k}\\
  &=  L_o({\phi},s,tp)    D^{k}_{\phi} \cK_o({\phi},s,t,tp)[v_i]_{i \in \cA_k}+  \sum_{j=1}^k L_o({v}_{j},s,tp)   D^{k-1}_{\phi}  \cK_0({\phi},s,t,tp) 
[v_i]_{\stackrel{i \in \cA_k}{ i\neq j}},\nonumber
\end{align}
for $k \ge 1$ with $\cA_k= \{1,\dots,k\}$.

\begin{Lemma}\label{prop:smooth-ovH-0}
Let $k \in \N \cup \{0\}$, $\d>0$, $\phi \in \cO_\d$, and $v_1,\dots,v_k \in C^{2,\a}(\R)$. Moreover, let $\cF_e, \cF_o,\ti \cF: \R \to \R$ be defined by 
\begin{align*}
\cF_e(s)&= \int_{S^1} \frac{1}{p_\s^2}  \int_{\R}  D^k_\phi \mathcal{M}_e({\phi} , s,t,p_\s)[v_1,\dots,v_k]\,dt d\s,\\
\cF_o(s)&= \int_{S^1} \frac{1}{p_\s^2}  \int_{\R}  D^k_\phi \mathcal{M}_o({\phi} , s,t,p_\s)[v_1,\dots,v_k]\,dt d\s,\\
\ti \cF(s)&= \int_{S^1}   \int_{\R}D^k_\phi\{\phi(s-t p_\s) \mathcal{K}({\phi} , s,t,tp_\s)\}[v_1,\dots,v_k]\,dt d\s,
\end{align*}
where $\mathcal{M}_e, \mathcal{M}_o :  \cO_\d \times \R^3    \to \R$  are  defined by \eqref{eq:cMe}  and \eqref{eq:cMo}.
Then   $\cF_e, \cF_o, \ti \cF\in C^{1,\a}(\R)$. Moreover, there exists a constant $c=c(\a,k,\d)$ such that 
\begin{equation*}
\|\cF_e\|_{C^{1,\a}(\R)}+ \|\cF_o\|_{C^{1,\a}(\R)}+\|\ti \cF\|_{C^{1,\a}(\R)}   \leq  c(1+ \|{\phi}\|_{C^{2,\a}(\R)} )^{c}   \prod_{i=1}^k \|{v}_i\|_{C^{2, \a}(\R)} .
\end{equation*}
\end{Lemma}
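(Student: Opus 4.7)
The plan is to adapt the strategy used for Lemma~\ref{lem:I_eI_o} together with the kernel estimates of Lemma~\ref{lem:est-cK}. First, using the Leibniz-type identities \eqref{eq:diffcM123-e} and \eqref{eq:diffcM123-o}, together with the linearity of the map $\phi \mapsto \phi(s-tp_\s)$ (which implies that its $j$-th $\phi$-derivative vanishes for $j \geq 2$), I would decompose each of the three integrands $D^k_\phi \cM_e$, $D^k_\phi \cM_o$ and $D^k_\phi\bigl\{\phi(s-tp_\s)\cK(\phi,s,t,tp_\s)\bigr\}$ into a finite sum of products. Each product consists of a factor of the form $L_e(\psi,s,tp_\s)$, $L_o(\psi,s,tp_\s)$ or $\psi(s-tp_\s)$, with $\psi \in \{\phi,v_1,\dots,v_k\}$, multiplied by a higher-order $\phi$-derivative of $\cK$ or $\cK_o$ evaluated at $(\phi,s,t,tp_\s)$ with at most $k$ directions taken among $\{v_1,\dots,v_k\}$.

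For $\cF_e$ and $\cF_o$ each resulting summand fits the template of the integrals $I_e$ and $I_o$ analysed in Lemma~\ref{lem:I_eI_o}, with the one modification that the accompanying kernel now also depends on $tp_\s$. I would repeat the proof of Lemma~\ref{lem:I_eI_o} verbatim, replacing the $p$-independent $\mu$ (respectively $\nu$) by $D^{k'}_\phi \cK(\phi,s,t,tp_\s)[\cdot]$ (respectively $D^{k'}_\phi \cK_o(\phi,s,t,tp_\s)[\cdot]$); the inequalities \eqref{eq:Dk-K-s} and \eqref{eq:Dk-K-s_1s_2} with $i=0$ provide the sup- and $s$-Hölder-bounds of order $(1+t^2)^{-3/2}$ that the argument needs, uniformly in the parameter $p_\s$. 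For $\cF_o$ the critical vanishing hypothesis $\nu(\cdot,\cdot,0)=0$ holds automatically since $\cK_o$ is the odd part of $\cK$ in its last variable. This yields the $C^{0,\a}$ part of the estimate for $\cF_e$ and $\cF_o$.

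The term $\ti\cF$ is easier because the factor $p_\s^{-2}$ is absent. Each summand has the form $\psi(s-tp_\s)\,D^{k'}_\phi \cK(\phi,s,t,tp_\s)[\cdot]$, and is bounded by a constant times $\|\psi\|_{C^\a(\R)}(1+\|\phi\|_{C^{1,\a}(\R)})^c/(1+t^2)^{3/2}$ thanks to \eqref{eq:Dk-K-s}; a similar $C^{0,\a}$-bound follows from \eqref{eq:Dk-K-s_1s_2} combined with the Hölder regularity of $\psi$ and the integrability of $(1+t^2)^{-3/2}$ over $\R$.

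To upgrade to $C^{1,\a}$ I would differentiate the three integrals in $s$ under the integral sign, which is legitimate since the $i=1$ estimates of Lemma~\ref{lem:est-cK} supply dominating majorants. Differentiating $L_e$ or $L_o$ in $s$ only replaces $\phi$ by $\phi'$ in their first slot, by the elementary identity $\de_s L_\bullet(\phi,s,\tau) = L_\bullet(\phi',s,\tau)$ for $\bullet \in \{e,o\}$ that follows at once from \eqref{eq:defLe_Lo}; this keeps the structure of the Leibniz expansion intact with $\phi' \in C^{1,\a}$ replacing $\phi \in C^{2,\a}$. A second application of the argument of the previous paragraphs, now using the $i=1$ bounds of Lemma~\ref{lem:est-cK}, then gives the desired $C^{0,\a}$ control of $\de_s\cF_e$, $\de_s\cF_o$ and $\de_s\ti\cF$. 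The main obstacle is the careful bookkeeping of the many Leibniz summands and the verification that the $p$-dependent kernels still obey the Lemma~\ref{lem:I_eI_o}-type estimates uniformly in $p_\s$; once this is granted, everything else reduces to a bounded number of invocations of the same inequality.
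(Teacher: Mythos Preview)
Your proposal is correct and follows exactly the same approach as the paper's (very terse) proof, which simply cites Lemma~\ref{lem:I_eI_o}, Lemma~\ref{lem:est-cK}, \eqref{eq:diffcM123-e}, \eqref{eq:def-K-o}, \eqref{eq:diffcM123-o} and \eqref{eq:Dk-LT2}. In fact you supply more detail than the paper does: you explicitly flag that the kernel $D^{k'}_\phi\cK(\phi,s,t,tp_\s)$ depends on $p_\s$ (so that the $\mu$ of Lemma~\ref{lem:I_eI_o} must be allowed a third variable, with bounds uniform in $p$ coming from Lemma~\ref{lem:est-cK}), and you spell out the $C^{1,\a}$ upgrade via $\partial_s L_\bullet(\phi,\cdot,\cdot)=L_\bullet(\phi',\cdot,\cdot)$ together with the $i=1$ case of Lemma~\ref{lem:est-cK}.
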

\begin{proof}
 The estimates of  $\cF_e$ and  $\cF_o$ follow from Lemma \ref{lem:I_eI_o}, Lemma \ref{lem:est-cK}, \eqref{eq:diffcM123-e}.  \eqref{eq:def-K-o} and  \eqref{eq:diffcM123-o}. On the other estimate of  $\ti \cF$, follows from Lemma \ref{lem:est-cK} and the formula \eqref{eq:Dk-LT2}.
\QED
\end{proof}

Using the representation (\ref{lem:DERV2-1}) and Lemma \ref{prop:smooth-ovH-0}, we can now follow precisely the argument in the proof of \cite[Proposition 4.4]{CMT} to prove that the map  $\cH$ defined in (\ref{lem:DERVA1}) is of class $C^\infty$, and that for every  $k\in\N$, $v_1,\dots,v_k \in C^{2,\a}(\R)$, we have
 \begin{align}\label{eq:DkH-in-integ-variant}
   &D^k   \mathcal{H} ({\phi})[v_1,\dots,v_k]  (s)= \\
&\frac{1}{2}\int_{S^1} \frac{1}{p_\s^2}  \int_{\R}  \Bigl( D^k_\phi \mathcal{M}_e({\phi} , s,t,p_\s)[v_1,\dots,v_k]+ D^k_\phi \mathcal{M}_o({\phi} , s,t,p_\s)[v_1,\dots,v_k]\Bigr)dt d\s\nonumber\\
 &+\frac{1}{2}\int_{S^1}   \int_{\R}D^k_\phi\{\phi(s-t p_\s) \mathcal{K}({\phi} , s,t,tp_\s)\}[v_1,\dots,v_k]\,dt d\s.\nonumber
\end{align}
Here $ \mathcal{M}_e $, $ \mathcal{M}_o $ and $\mathcal{K}$ are defined in \eqref{eq:cMe}, \eqref{eq:cMo} and (\ref{eq:def-K}).

To finish the proof of Proposition~\ref{prop:smooth-ovH}, it only remains to observe that (\ref{eq:DkH-in-integ-variant}) is equivalent to (\ref{eq:DkH-in-integ}). This follows since
$$
\phi(s-t p) \mathcal{K}({\phi} , s,t,t p) = \mathcal{\ov{M}}({\phi} , s,t,p)
$$
and 
\begin{align*}
&\mathcal{M}_e({\phi} , s,t,p)+ \mathcal{M}_o({\phi} , s,t,p)\\  
  &= 2tp \L_1(\phi,t,t p)\mathcal{K}({\phi} , s,tp) - L_o(\phi,t,t p)\Bigl(\mathcal{K}({\phi} , s,t,tp)-\mathcal{K}_0({\phi} , s,t,tp)\Bigr)\\  
  &= 2p \mathcal{M}({\phi} , s,t, p) - \mathcal{\widetilde{M}}({\phi} , s,t,p), 
\end{align*}
where the quantity 
$$
\mathcal{\widetilde{M}}({\phi} , s,t,p) := L_o(\phi,t,t p)\Bigl(\mathcal{K}({\phi} , s,t,tp)-\mathcal{K}_0({\phi} , s,t, tp)\Bigr)
$$
is odd in $t$. Hence also $D^k_\phi \mathcal{\widetilde{M}}({\phi} , s,t,p)[v_1,\dots,v_k]$ is odd in $t$, and this implies that
\begin{align*}
  &\frac{1}{2}\int_{S^1} \frac{1}{p_\s^2} \int_{\R}  \Bigl( D^k_\phi \mathcal{M}_e({\phi} , s,t,p_\s)[v_1,\dots,v_k]+ D^k_\phi \mathcal{M}_o({\phi} , s,t,p_\s)[v_1,\dots,v_k]\Bigr)dt d\s\\
  &= \int_{S^1} \frac{1}{p_\s} \int_{\R}  D^k \mathcal{M}({\phi} , s,t, p_\s)dt d\s.
\end{align*}
Hence (\ref{eq:DkH-in-integ-variant}) is equivalent to (\ref{eq:DkH-in-integ}), and the proof of Proposition~\ref{prop:smooth-ovH} is finished.
\QED
\section{The linearization of  $H$ at constant functions}
\label{sec: linearized}

In this section, we compute the linearization of the operator $H$ at the constant function $\lambda \in \cU$, and we study key spectral properties of this linearization. We keep using the notation from the previous section. We need the following preliminary lemma on properties of the function 
\begin{equation}
  \label{eq:def-G}
G: \R \setminus \{0\} \to \R, \qquad G(t):=\int_{S^1}\frac{1}{(t^2+p^2_\s)^{\frac{3}{2}}} d\s,  
\end{equation}
which will appear in the expression for the linearization of $H$ at constant functions, see Lemma~\ref{lem:DERVATCOSNT}. 

\begin{Lemma}\label{LemmabehaveG}
  The function $G$ is even with
  \begin{equation}
    \label{eq:G-asymptotics}
    |G(t)| = O(|t|^{-3}) \qquad \text{as $|t| \to \infty$.}
  \end{equation}
Moreover, we have 
\begin{equation}\label{behaveG-eq}
G(t) =4 t^{-2}g(t) \qquad \text{for $t \in (0, +\infty)$}
\end{equation}
with a function $g\in C^{1}([0,\infty))$ satisfying  $g(t)>0$ for all $t \ge 0$.
\end{Lemma}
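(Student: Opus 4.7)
Evenness of $G$ is immediate since $G(t)$ depends on $t$ only through $t^2$, and the asymptotic bound \eqref{eq:G-asymptotics} follows from the pointwise estimate $(t^2+p_\sigma^2)^{3/2}\ge |t|^3$ on $S^1$, which yields $|G(t)|\le 2\pi/|t|^3$. The substantive content of the lemma is therefore the representation \eqref{behaveG-eq}.

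To establish it I would first apply the identity \eqref{eq:theta-sigma-formula} to rewrite
$$G(t) = 4\int_0^{\pi/2}\frac{d\theta}{(t^2+4\sin^2\theta)^{3/2}} = 4\int_0^{\pi/2}\frac{d\theta}{(t^2+4\cos^2\theta)^{3/2}},$$
the second form arising by replacing $\theta$ with $\pi/2-\theta$. Working in the second form for $t>0$, the plan is to perform a chain of elementary substitutions --- beginning with $\cos\theta = (t/2)\tan\psi$, continuing with $\sin\psi = 2v/\sqrt{t^2+4}$, and finishing with $v=\sin\phi$ --- each of which is designed to absorb the $t$-dependent endpoints that appear at intermediate stages. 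Composing them should collapse the integral to
$$G(t) = \frac{4}{t^2(t^2+4)}\int_0^{\pi/2}\sqrt{t^2+4\cos^2\phi}\,d\phi,$$
so that \eqref{behaveG-eq} holds with
$$g(t):=\frac{1}{t^2+4}\int_0^{\pi/2}\sqrt{t^2+4\cos^2\phi}\,d\phi.$$

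From this explicit formula the remaining conclusions follow easily. Since $\sqrt{t^2+4\cos^2\phi}\ge 2|\cos\phi|$, we get $g(t)>0$ for all $t\ge 0$ (with $g(0)=1/2$). For $C^1$-regularity, set $h(t):=\int_0^{\pi/2}\sqrt{t^2+4\cos^2\phi}\,d\phi$. The $t$-derivative of the integrand equals $t(t^2+4\cos^2\phi)^{-1/2}$ and is pointwise bounded by $1$, so differentiation under the integral gives $h'(t)=\int_0^{\pi/2} t(t^2+4\cos^2\phi)^{-1/2}d\phi$, and dominated convergence shows $h'$ continuous at $t=0$ with $h'(0)=0$. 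Thus $h\in C^1(\R)$, and since $t^2+4$ is smooth and strictly positive, $g=h/(t^2+4)\in C^1([0,\infty))$.

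The hard part will be locating the correct substitution chain in the second step: the hypersingular nature of $G$ at $t=0$ rules out direct expansion or naive differentiation, and one must find substitutions that simultaneously extract the prefactor $1/t^2$ and absorb the intermediate $t$-dependent integration limits, leaving a regular integrand on the fixed interval $[0,\pi/2]$.
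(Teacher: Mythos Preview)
Your proof is correct, and in fact you arrive at exactly the same closed form for $g$ as the paper: writing $r(t)=2/\sqrt{4+t^2}$, the paper obtains
\[
g(t)=\frac{1}{\sqrt{4+t^2}}\int_0^{\pi/2}\sqrt{1-r(t)^2\sin^2\theta}\,d\theta
=\frac{1}{t^2+4}\int_0^{\pi/2}\sqrt{t^2+4\cos^2\phi}\,d\phi,
\]
which is precisely your formula. The difference is in how each gets there and in how the $C^1$ regularity at $t=0$ is handled.

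The paper first introduces $g$ via the substitution $\sin\theta=rt$, giving $g(t)=\int_0^{1/t}(1+4r^2)^{-3/2}(1-(rt)^2)^{-1/2}dr$, defines $g(0)$ separately, and then spends several estimates (splitting the $r$--integral into three ranges) to show $|g(t)-g(0)|\le Ct^2(1-\ln t)$ and hence $g'(0)=0$. Only afterwards does it invoke a Gradshteyn--Ryzhik identity to rewrite $g$ as a complete elliptic integral and use that form to check continuity of $g'$. Your route bypasses both the table lookup and the separate analysis at $t=0$: the substitution chain $\cos\theta=(t/2)\tan\psi$, $\sin\psi=2v/\sqrt{t^2+4}$, $v=\sin\phi$ lands directly on the regular integrand $\sqrt{t^2+4\cos^2\phi}$ over $[0,\pi/2]$, from which positivity and $C^1$--regularity (including at $t=0$, via the uniform bound $|\partial_t\sqrt{t^2+4\cos^2\phi}|\le 1$ and dominated convergence) follow in one stroke. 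Your argument is therefore more self-contained and slightly cleaner than the paper's.
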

\proof
The evenness and (\ref{eq:G-asymptotics}) follow directly from the definition of $G$. By (\ref{eq:theta-sigma-formula}) we have 
\begin{align}\label{eq.GG}
G(t)&=4\int_{0}^{\frac{\pi}{2}} \frac{1}{(t^2+ 4\sin^2(\theta))^{\frac{3}{2}}} d\theta =4t^{-2}g(t),
\end{align}
where  $g\in C^\infty(0,\infty)$ is given by 
\begin{align}\label{eq.defgg}
g(t):=\int_{0}^{\frac{1}{t}} \frac{1}{(1+4r^2)^{\frac{3}{2}}} \frac{1}{\sqrt{1-(rt)^2}} d r \qquad\textrm{ for all $t>0$.}
\end{align}
We extend $g$ to $[0,\infty)$ by setting 
$$g(0):=\int_{0}^{+\infty} \frac{1}{(1+4r^2)^{\frac{3}{2}}} d r.$$
Then 
\begin{align}\label{eq: conVERG0}
g(t)-g(0)&=\int_{0}^{\frac{1}{t}} \frac{1}{(1+4r^2)^{\frac{3}{2}}}\biggl(\frac{1}{\sqrt{1-(rt)^2}}-1\biggl) dr+ \int_{\frac{1}{t}}^{+\infty} \frac{1}{(1+4r^2)^{\frac{3}{2}}} d r\nonumber\\
&=\frac{t^2}{2}\int_{0}^{\frac{1}{t}} \int_{0}^{1} \frac{1}{(1+4r^2)^{\frac{3}{2}}} \frac{r^2}{(1-\e(rt)^2)^{\frac{3}{2}}}d\e dr+ \int_{\frac{1}{t}}^{+\infty} \frac{1}{(1+4r^2)^{\frac{3}{2}}} d r\nonumber\\
&=\frac{t^2}{2}\int_{0}^{\frac{1}{2t}}  \int_{0}^{1} \frac{1}{(1+4r^2)^{\frac{3}{2}}} \frac{r^2}{(1-\e(rt)^2)^{\frac{3}{2}}}d\e dr\nonumber\\
& +\frac{1}{2}\int_{\frac{1}{2t}}^{\frac{1}{t}} \int_{0}^{1} \frac{1}{(1+4r^2)^{\frac{3}{2}}} \frac{(rt)^2}{\sqrt{1-\e(rt)^2}}d\e dr  
-\int_{\frac{1}{t}}^{+\infty} \frac{1}{(1+4r^2)^{\frac{3}{2}}} dr.
\end{align}
It is plain that
\begin{align}\label{eq: conVERG1}
\int_{\frac{1}{t}}^{+\infty} \frac{1}{(1+4r^2)^{\frac{3}{2}}} d r\leq C t^2\quad \textrm{for all}\quad  t\in \R.
\end{align}
In addition,
\begin{align}
\bigg|\int_{\frac{1}{2t}}^{\frac{1}{t}}  \int_{0}^{1} \frac{1}{(1+4r^2)^{\frac{3}{2}}} \frac{r^2}{(1-\e(rt)^2)^{\frac{3}{2}}}d\e dr\bigg| &\leq  \bigg|\int_{\frac{1}{2t}}^{\frac{1}{t}}  \int_{0}^{1}  \frac{r^{-1}}{(1-\e(rt)^2)^{\frac{3}{2}}}d\e dr\bigg| \nonumber\\
&\leq  \bigg|\int_{t}^{2t}  \int_{0}^{1}  \frac{\rho^2}{(\rho^2-\e t^2)^{\frac{3}{2}}}d\e d\rho\bigg|,\nonumber
\end{align}
and since 
$$\int_{0}^{1}  \frac{1}{(\rho^2-\e t^2)^{\frac{3}{2}}}d\e =\frac{2}{t^2}\biggl(\frac{1}{\rho}-\frac{1}{\sqrt{\rho^2-t^2}}\biggl),$$
it follows that
\begin{align}\label{eq: conVERG2}
\int_{t}^{2t}  \int_{0}^{1}  \frac{\rho^2}{(\rho^2-\e t^2)^{\frac{3}{2}}}d\e d\rho= \frac{2}{t^2} \biggl(\int_{t}^{2t}\rho d\rho-\int_{t}^{2t} \frac{\rho^2 d\rho}{\sqrt{\rho^2-t^2}}\biggl) \nonumber\\
= \frac{2}{t^2} \biggl(\int_{t}^{2t}\rho d\rho-t^2\int_{1}^{2} \frac{\rho^2 d\rho}{\sqrt{\rho^2-1}}\biggl).
\end{align}
Consequently,
\begin{align}
\bigg|\int_{\frac{1}{2t}}^{\frac{1}{t}} \int_{0}^{1} \frac{1}{(1+4r^2)^{\frac{3}{2}}} \frac{(rt)^2}{\sqrt{1-\e(rt)^2}}d\e dr\bigg|\leq  Ct^2.
\end{align}
For $r\leq \frac{1}{2t}$ and $\e\leq 1$, $1-\e(rt)^2\geq \frac{3}{4}$. Therefore for every $t\in (0, 1]$,
\begin{align}\label{eq: conVERG3}
\bigg|\int_{0}^{\frac{1}{2t}}  \int_{0}^{1} \frac{1}{(1+4r^2)^{\frac{3}{2}}} \frac{r^2}{(1-\e(rt)^2)^{\frac{3}{2}}}d\e dr\bigg| &\leq C \biggl (\int_{0}^{\frac{1}{2}} \frac{r^2}{(1+4r^2)^{\frac{3}{2}}} dr+\int_{\frac{1}{2}}^{\frac{1}{2t}}\frac{r^2}{(1+4r^2)^{\frac{3}{2}}} dr\biggl) \nonumber\\
& \leq C(1-\ln (t)). 
\end{align}
Combining  \eqref{eq: conVERG0}, \eqref{eq: conVERG1} \eqref{eq: conVERG2} and \eqref{eq: conVERG3}, we obtain
$$
| g(t)-g(0)|\leq C t^2(1-\ln (t)), \quad t\in (0, 1].
$$
Hence $g$ is differentiable at zero with 
\begin{align}\label{eq.defat0}
g'(0)=0.
\end{align}
We  now  check  the continuity of the derivative at zero.

By change of variable $\rho =rt$, we get from \eqref{eq.defgg},
$$
g(t)=t^2 \int_{0}^{1} \frac{1}{(t^2+4\rho^2)^{\frac{3}{2}}} \frac{1}{\sqrt{1-\rho^2}} d \rho=t^2 \int_{0}^{{\frac{\pi}{2}}} \frac{1}{(t^2+4\cos^2(\theta))^{\frac{3}{2}}}  d \theta,
$$
where we used $\rho=\cos(\theta)$ to get that last equality.  Hence 
\begin{align}\label{eq.dcontinudiffato2}
g(t)&=\frac{t^2}{2}  \int_0^{\pi} \frac{1}{(t^2+2+ 2\cos(\theta))^{\frac{3}{2}}} d\theta= \frac{t^2}{2} \int_0^{\pi} \frac{1}{\sqrt{a+ b\cos(\theta)})^{3}} d\theta,
\end{align}
where 
$a=t^2+2$ and $b=2$. 
Applying   \cite[Page 182, 3.]{I.S.I.M}, 
we find that 
\begin{align}\label{eq.dcontinudiffato1}
 \int_0^{\pi} \frac{1}{\sqrt{a+ b\cos(\theta)})^{3}} d\theta= \frac{2}{t^2 \sqrt{4+t^2}} E(\frac{\pi}{2}, r(t)), \quad t\ne 0
\end{align}
where $ r(t) =\frac{2}{ \sqrt{4+t^2}}$ and  $E(\cdot, \cdot)$ is the elliptic integral of second kind  defined (see \cite[Page 860, 3.]{I.S.I.M})  by
\begin{align}\label{eq.deE}
E(y, k)=\int_0^{y} \sqrt{1-k^2 \sin^2(\theta)}) d\theta=\int_0^{\sin(y)} \frac{\sqrt{1-k^2 x^2}}{\sqrt{1-x^2}} d x.
\end{align}
From  \eqref{eq.deE},  \eqref{eq.dcontinudiffato1} and  \eqref{eq.dcontinudiffato2}, 
\begin{align}\label{eq.dcontinudiffato}
g(t)=\frac{1}{\sqrt{4+t^2}} \int_0^{\frac{\pi}{2}} \sqrt{1-r^2(t) \sin^2(\theta) } d\theta.
\end{align}
We consider the function $h(t, \theta):=\sqrt{1-r^2(t) \sin^2(\theta) }$  defined for $\theta \in [0, \frac{\pi}{2}]$ and  $ t \in (0, \infty)$. Then
$$   \frac{\partial h}{\partial t}= \frac{ 4t \sin^2(\theta)}{(4+t^2)^2} \frac{1}{\sqrt{1-r^2(t) \sin^2(\theta) }}.$$
Since $\sin(\theta)\leq 1$, we have
\begin{equation}
  \label{eq:extra-eq-assumption-lebesgue}
\frac{t}{\sqrt{1-r^2(t) \sin^2(\theta)}} \leq \sqrt{4+t^2}  
\end{equation}
and
$$ \bigg| \frac{\partial h}{\partial t} \bigg| =  \frac{ 4t \sin^2(\theta)}{(4+t^2)^2} \frac{1}{\sqrt{1-r^2(t) \sin^2(\theta)}} \leq  \frac{ 4 \sin^2(\theta) }{(4+t^2)^{\frac{3}{2}} } \leq \frac{1}{2}.$$ By Lebesgue's theorem, we deduce with  \eqref{eq.dcontinudiffato} that $g$  is differentiable  and  for all $ t\in (0, \infty)$,
\begin{align}
  &g'(t)=\label{eq.diffg1}\\
  &-\frac{t}{(4+t^2)^{\frac{3}{2}}} \int_0^{\frac{\pi}{2}} \sqrt{1-r^2(t) \sin^2(\theta) } d\theta+ \frac{1}{(4+t^2)^{\frac{1}{2}} }\int_0^{\frac{\pi}{2}} \frac{4t\sin^2(\theta)}{(4+t^2)^{2}\sqrt{1-r^2(t) \sin^2(\theta) }} d\theta. \nonumber
\end{align}
Moreover,  it follows from \eqref{eq.defat0}, (\ref{eq:extra-eq-assumption-lebesgue}), \eqref{eq.diffg1} and Lebesgue's theorem that $g\in C^1([0,\infty))$. In addition, since $r^2(t)\leq 1$, \eqref{eq.dcontinudiffato} gives
\begin{align}\label{eq.dcontinudiffato23}
g(t)\geq \frac{1}{\sqrt{4+t^2}} \int_0^{\frac{\pi}{2}} \sqrt{1- \sin^2(\theta) } d\theta=\frac{1}{\sqrt{4+t^2}}> 0.
\end{align}
This completes the proof.
%
\QED

The next lemma yields a representation of the linearization of the operator $H$ at the constant function $\lambda \in \cU$ as a principal value integral operator.

\begin{Lemma}\label{lem:DERVATCOSNT}
At any constant function $\lambda \in \cU$, we have
\begin{align}
  &-\lambda  D_\phi H(\lambda)v(s)=\label{eq3diff}\\
  &PV \int_{\R}\{v(s)-v(s-\lambda\t )\}G(\t) d\t+\int_{\R}v(s-\lambda \t)F(\t)d \t-2\pi v(s),\nonumber
\end{align}
where $PV$ denotes the principal value integral, $G$ is defined in (\ref{eq:def-G}), and 
\begin{equation}
  \label{eq:def-F-}
F:=G_0-\frac{3}{4}G_1\quad \text{with}\quad G_0(\t):=\int_{S^1}\frac{p^2_\s}{(\t^2+p^2_\s)^{\frac{3}{2}}} d\s,\quad G_1(\t):=\int_{S^1}\frac{p^4_\s}{(\t^2+p^2_\s)^{\frac{5}{2}}} d\s.
\end{equation}
\end{Lemma}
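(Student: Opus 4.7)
The approach is to start from the representation $\cH(\phi)(s)=-\sqrt{1+(\phi'(s))^2}\,H(\phi)(s)$ in Lemma~\ref{lem:expreH1}, because the structural cancellations at $\phi\equiv\lambda$ are most transparent there. Writing $H=-A\cdot\cH$ with $A(\phi)(s):=(1+\phi'(s)^2)^{-1/2}$, one sees $A(\lambda)\equiv 1$ and $D_\phi A(\lambda)v\equiv 0$ because $\lambda'\equiv 0$. Hence $D_\phi H(\lambda)v=-D_\phi\cH(\lambda)v$, and it suffices to compute $\lambda D_\phi\cH(\lambda)v$.

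I would then differentiate \eqref{ewp1H} in the direction $v$ term by term. Two crucial simplifications occur at $\phi\equiv\lambda$: the numerator $(\phi(s)-\phi(s-t)-t\phi'(s))\phi(s-t)\sqrt{1+(\phi')^2(s-t)}$ of the first integrand vanishes identically, so the quotient-rule contribution from differentiating its denominator drops out; and $\sqrt{1+(\phi')^2(s-t)}$ has zero linearization. The common denominator reduces to $t^2+\lambda^2 p_\sigma^2$, with linearization $\lambda(v(s)+v(s-t))p_\sigma^2$ in direction $v$. Collecting the surviving terms and performing the rescaling $t=\lambda\tau$ converts each ratio into a scale-invariant kernel of the form $p_\sigma^{2k}/(\tau^2+p_\sigma^2)^{(3+2k)/2}$; integrating in $\sigma$ produces precisely the kernels $G$, $G_0$, $G_1$ of the statement, yielding
\begin{align*}
\lambda D_\phi\cH(\lambda)v(s)&=\int_\R\bigl[v(s)-v(s-\lambda\tau)-\lambda\tau v'(s)\bigr]G(\tau)\,d\tau\\
&\quad+\int_\R v(s-\lambda\tau)G_0(\tau)\,d\tau-\tfrac34\int_\R\bigl[v(s)+v(s-\lambda\tau)\bigr]G_1(\tau)\,d\tau.
\end{align*}

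Because $G$ is even with $G(\tau)=O(\tau^{-2})$ near $0$ by Lemma~\ref{LemmabehaveG}, the odd contribution $\lambda\tau v'(s)G(\tau)$ vanishes under symmetric truncation, so the first integral equals $PV\!\int_\R\{v(s)-v(s-\lambda\tau)\}G(\tau)d\tau$. Regrouping the remaining pieces with $F=G_0-\tfrac34 G_1$ gives
$$\lambda D_\phi\cH(\lambda)v(s)=PV\int_\R\{v(s)-v(s-\lambda\tau)\}G(\tau)\,d\tau+\int_\R v(s-\lambda\tau)F(\tau)\,d\tau-\tfrac34 v(s)\int_\R G_1(\tau)\,d\tau.$$
The claim then reduces to showing $\int_\R G_1(\tau)\,d\tau=\tfrac{8\pi}{3}$. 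By Fubini and the scaling $\tau=p_\sigma u$, the inner integral equals $p_\sigma^{-4}\int_\R(1+u^2)^{-5/2}du=\tfrac{4}{3}p_\sigma^{-4}$ by \eqref{eq:beta-function-eq} with exponent $\tfrac52$ (using $B(2,\tfrac12)=\tfrac43$); integrating $p_\sigma^4\cdot\tfrac{4}{3}p_\sigma^{-4}=\tfrac43$ over $S^1$ yields $\tfrac{8\pi}{3}$, and the last term becomes $-2\pi v(s)$.

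The main obstacle, beyond careful bookkeeping, is the PV interpretation: each of $[v(s)-v(s-\lambda\tau)]G(\tau)$ and $\lambda\tau v'(s)G(\tau)$ is only conditionally integrable near the origin, so splitting the absolutely convergent integrand $[v(s)-v(s-\lambda\tau)-\lambda\tau v'(s)]G(\tau)$ demands the evenness of $G$ and a symmetric truncation argument. Once this is cleared, Proposition~\ref{prop:smooth-ovH} supplies the legitimacy of differentiation under the integral, and everything else is algebraic.
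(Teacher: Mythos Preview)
Your proposal is correct and follows essentially the same route as the paper: the paper also reduces to $D_\phi\cH(\lambda)v$, linearizes the two integrands (using that the first numerator vanishes at $\phi\equiv\lambda$), applies the change of variable $t\mapsto \lambda\tau$ to obtain the kernels $G,G_0,G_1$, removes the $\lambda\tau v'(s)G(\tau)$ term by evenness/symmetric truncation to get the PV, and evaluates $\int_\R G_1=\tfrac{8\pi}{3}$ via the Beta-function identity~\eqref{eq:beta-function-eq}. The only cosmetic difference is that the paper differentiates the Lemma~\ref{lem:expreH2} form through Proposition~\ref{prop:smooth-ovH} (with the substitution $\tau=tp_\sigma/\lambda$) rather than differentiating \eqref{ewp1H} directly, but composing the two substitutions recovers exactly your $t=\lambda\tau$, so the computations coincide.
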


\proof
We first observe that
\begin{align}\label{eqrFin L1}
G_0,G_1 \in L^1(\R)\qquad \text{and therefore also}\qquad F \in L^1(\R).
\end{align}
Indeed, by (\ref{eq:beta-function-eq}) we have 
\begin{align}
  \int_{\R}G_0(\t)d\t&=\int_{S^1}\int_{\R}\frac{p^2_\s}{(\t^2+p^2_\s)^{\frac{3}{2}}} d\s d\t=\int_{S^1}\frac{1}{p_\s}\int_{\R}\frac{1}{(\frac{\t^2}{p^2_\s}+1)^{\frac{3}{2}}} d\s d\t \nonumber\\
&=\int_{S^1}\int_{\R}\frac{1}{(t^2+1)^{\frac{3}{2}}} d\s dt=4\pi \int_{0}^\infty \frac{1}{(t^2+1)^{\frac{3}{2}}} d\s dt = 4 \pi \label{G_0-double-int}
\end{align}
and also 
\begin{equation}\label{eq:converg23}
\int_{\R}G_1(\t)d\t=\int_{S^1}\int_{\R}\frac{1}{(t^2+1)^{\frac{5}{2}}} d\s dt=\frac{8\pi}{3}.
\end{equation}
Hence (\ref{eqrFin L1}) follows.
Next we fix $\lambda>0$, regarded as a constant function in $\cU$. By \eqref{eq:Dk-T1T2} and Proposition~\ref{prop:smooth-ovH}, 
\begin{equation}
   \label{eq:DkH-in-integ-const} 
 \begin{aligned}
- D_\phi H(\lambda) v(s) = D_\phi \mathcal{H}(\lambda)v(s) &=\int_{S^1} \frac{1}{p_\s}  \int_{\R}  D_\phi \mathcal{M}(\lambda, s,t,p_\s)v \,dt d\s \\
 &+\frac{1}{2} \int_{S^1}\frac{1}{p_\s}   \int_{\R}D_\phi \mathcal{\ov{M}}(\lambda,s,t,p_\s)v\,dt d\s
\end{aligned}
\end{equation}
for $v\in C^{2,\alpha}_{p,e}(\R)$, $s \in \R$ with $\mathcal{H}$ given in (\ref{lem:DERVA1}). We now write 
\begin{align*}
\mathcal{M}(\phi, s,t, p) &=\mathcal{M}_1(\phi, s,t, p)\sqrt{1+(\phi')^2(s-\t p)},\\
\mathcal{\ov{M}}(\phi, s,t, p)& :=\mathcal{M}_2(\phi, s,t, p)\sqrt{1+(\phi')^2(s-t p)}
\end{align*}
where
\begin{align*}
\mathcal{M}_1(\phi, s,t, p):&=t \L_1(\phi, s,t, p)\mathcal{K}(\phi, s, t, p)= t \L_1(\phi, s,t, p)\phi(s-tp) \mathcal{\widetilde K}(\phi, s, t, p) \nonumber\\
\mathcal{M}_2(\phi, s,t, p):&=\phi(s-t p)\mathcal{K}(\phi, s, t, p)=\phi(s-t p)^2\mathcal{\widetilde K}(\phi, s, t, p) 
\end{align*}
with
$$
\mathcal{\widetilde K}({u} , s,t,p) = \frac{1}{\{t^2+ t^2 \L^2_0(u,s,t p)+ u(s)u(s-t p)\}^{\frac{3}{2}}}.
$$
Since $\L_1(\lambda, s,t, p)=0$, we have
\begin{align*}
&D_\phi \mathcal{M}_1(\lambda, s,t, p)v=\lambda t \L_1(v, s,t, p)\mathcal{\widetilde K}(\lambda, s, t, p)\\
&=t\biggl(\frac{v(s)-v(s-tp)}{pt}-v'(s)\biggl)\frac{\lambda}{(t^2+\lambda^2)^{\frac{3}{2}}}=\frac{t}{\lambda^2} \biggl(\frac{v(s)-v(s-tp)}{pt}-v'(s)\biggl)\frac{p^3}{(\frac{t^2p^2}{\lambda^2} +p^2)^{\frac{3}{2}}}.
\end{align*}
Hence, making the change of variable $\t=\frac{tp_\s}{\lambda}$, we get
\begin{align}
&\int_{S^1} \frac{1}{p_\s}  \int_{\R} D_\phi \mathcal{M}_1(\lambda, s,t, p_\s)v dt d\s \nonumber=\frac{1}{\lambda}\int_{S^1}  \int_{\R} \frac{v(s)-v(s-\lambda\t )-\lambda\t v'(s)} {(\t^2+p^2_\s)^{\frac{3}{2}}}  d\t d\s\nonumber\\
  &=\frac{1}{\lambda} \lim_{\e\rightarrow 0}\biggl(\int_{S^1} \int_{|\t|\geq\e} \frac{v(s)-v(s-\lambda\t )-\lambda\t v'(s)} {(\t^2+p^2_\s)^{\frac{3}{2}}}  d\t d\s\biggr),\nonumber
\end{align}
where we used the Lebesgue's theorem together with Lemma~\ref{LemmabehaveG} and the estimate $|v(s)-v(s-\lambda\t )-\lambda\t v'(s)| \le \min(1,|\lambda\t|^{1+\alpha})$. Consequently, by Fubini's theorem,
\begin{align}\label{eq:diff1}
&\lambda \int_{S^1} \frac{1}{p_\s}  \int_{\R} D_\phi \mathcal{M}_1(\lambda, s,t, p_\s)v dt d\s   \\
  &=\lim_{\e\rightarrow 0} \int_{|\t|\geq\e}\Bigl( v(s)-v(s-\lambda\t )-\lambda\t v'(s)\Bigr)G(\tau)d\t \nonumber\\
  &=\lim_{\e\rightarrow 0} \int_{|\t|\geq\e}\bigl( v(s)-v(s-\lambda\t )\bigr)G(\tau)d\t = PV \int_{\R}\{v(s)-v(s-\lambda\t )\}G(\t) d\t \nonumber
\end{align}
with $G$ given in (\ref{eq:def-F-}). We also have $$D_\phi \mathcal{\widetilde K}(\lambda, s,t, p_\s)v(s)=-\frac{3\lambda}{2}\frac{v(s)+v(s-tp_\s)}{(t^2+\lambda^2)^{\frac{5}{2}}}$$ and 
\begin{align*}
D_\phi \mathcal{M}_2(\lambda, s,t, p_\s)v&=\lambda\biggl(2\mathcal{\widetilde K}(\lambda, s,t, p_\s)v(s-tp_\s)+\lambda D_\phi \mathcal{\widetilde K}(\lambda, s,t, p_\s)v\biggl)\nonumber\\
&=\lambda \biggl(2\frac{v(s-tp_\s)}{(t^2+\lambda^2)^{\frac{3}{2}}}-\frac{3\lambda^2}{2}\frac{v(s)+v(s-tp_\s)}{(t^2+\lambda^2)^{\frac{5}{2}}}\biggl)\\
&=\frac{1}{\lambda^2}p_\s^3 \biggl(\frac{2v(s-tp_\s)}{(\frac{t^2p_\s^2}{\lambda^2} +p_\s^2)^{\frac{3}{2}}}-\frac{3}{2}\frac{p_\s^2(v(s)+v(s-tp_\s))}{(\frac{t^2p_\s^2}{\lambda^2} +p_\sigma^2)^{\frac{5}{2}}}\biggl),
\end{align*}
yielding, again by the change of variable $\t=\frac{tp_\s}{\lambda}$,
\begin{align}
&\frac{\lambda}{2} \int_{S^1}  \int_{\R}  D_\phi \mathcal{M}_2(\lambda, s,t, p_\sigma)v \,dt \,d \s   \label{eq:diff2}\\
&= \int_{S^1}  \int_{\R} \biggl(\frac{p^2_\s v(s-\lambda\t )}{(\t^2+p^2_\s)^{\frac{3}{2}}} -\frac{3}{4}\frac{p^4_\s (v(s)+v(s-\lambda\t))}{(\t^2+p^2_\s)^{\frac{5}{2}}}\biggl) d\t d\s\nonumber\\
&= \int_{\R}\Bigl(G_0(\t) v(s-\lambda \t)-\frac{3}{4}G_1(\t)\bigl(v(s)+ v(s-\lambda \t)\bigr)\Bigr)d \t\nonumber\\
&= \Bigl(\int_{\R} v(s-\lambda \t)F(\t)d\t  -\frac{3}{4}v(s) \int_{\R} G_1(\t)d \t \Bigr)= \int_{\R} v(s-\lambda \t)F(\t)d\t - 2 \pi v(s) \nonumber
\end{align}
with $G_0$ and $G_1$ given in (\ref{eq:def-F-}). Here we used \eqref{eq:converg23} and Fubini's theorem together with the finiteness of the integrals in (\ref{G_0-double-int}) and (\ref{eq:converg23}). Combining (\ref{eq:DkH-in-integ-const}) with \eqref{eq:diff1} and \eqref{eq:diff2}, we deduce \eqref{eq3diff}.
\QED

Next, we study the eigenvalues of the linearized operator
\begin{equation}
  \label{eq:def-L-lambda}
L_\lambda:=-\lambda  D_\phi H(\lambda)
\end{equation}
computed in Lemma \ref{lem:DERVATCOSNT}. As we shall see, the eigenvalues are expressed in term of  modified Bessel functions  $I_{\nu}$  and $K_{\nu}$.  We shall use various identities, inequalities and asymptotics involving these functions, and we collect them in the appendix of this paper for the convenience of the reader.

\begin{Lemma}\label{lem:eigenval}
Let   $\l>0$. The functions  $e_k \in C^{2,\alpha}_{p,e}(\R)$ given by
\begin{equation} \label{eq:def-e-k}
s \mapsto e_k(s )=\cos(k s ), \quad k \in \N \cup  \{0\}
\end{equation}
are the eigenfunctions of the operator $L_\lambda$ given (\ref{eq:def-L-lambda}). More precisely,
\be \label{eq:L-lam-ek-eq-nuk-ek}
L_\l e_k=V(\l k) e_k,
\ee
with
\begin{align} \label{eq:lkhk}
V(\rho)= 4\pi\rho I_1(\rho)\biggl(\rho K_1(\rho)-K_0(\rho)\biggl) \qquad \text{for $\rho>0$.}
\end{align}
Here $I_\nu$ and $K_\nu$ are the Bessel functions defined in the appendix in \eqref{Bes1} and \eqref{Bes2} for  $\nu\geq 0$. 
\end{Lemma}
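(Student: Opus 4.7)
The plan is to substitute $v=e_k$ into the formula of Lemma~\ref{lem:DERVATCOSNT}. Using translation invariance, the evenness of $G$ and $F$, and the expansion $\cos(k(s-\lambda\tau))=\cos(ks)\cos(k\lambda\tau)+\sin(ks)\sin(k\lambda\tau)$, the $\sin(ks)$ contributions pair with odd integrands and drop out (the principal value handling the non-absolutely-integrable $\sin(k\lambda\tau)G(\tau)$ piece near the origin). This yields $L_\lambda e_k(s)=V(k\lambda)\,e_k(s)$ with
\[
V(\rho)=\int_\R(1-\cos(\rho\tau))\,G(\tau)\,d\tau+\int_\R\cos(\rho\tau)\,F(\tau)\,d\tau-2\pi,
\]
the first integral being absolutely convergent because $|1-\cos(\rho\tau)|=O(\tau^2)$ while $G(\tau)=O(\tau^{-2})$ near zero by \eqref{behaveG-eq}, and the second because $F\in L^1(\R)$ by \eqref{eqrFin L1}. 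It then remains to identify $V(\rho)$ with the right-hand side of \eqref{eq:lkhk}.

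The next step is to interchange the order of integration in the definitions of $G$ and $F$ and to evaluate the inner $\tau$-integrals via the classical formula
\[
\int_0^\infty\frac{\cos(a\tau)}{(\tau^2+b^2)^{\nu+1/2}}\,d\tau=\frac{\sqrt{\pi}}{\Gamma(\nu+1/2)}\Bigl(\frac{a}{2b}\Bigr)^{\!\nu}K_\nu(ab),\qquad \nu=1,2.
\]
After using the recurrence $K_2(z)=K_0(z)+\tfrac{2}{z}K_1(z)$ to eliminate $K_2$, the expression for $V(\rho)$ reduces to three $S^1$-integrals involving only $K_0$ and $K_1$ at argument $\rho p_\sigma$, namely $\cA_1(\rho):=\int_{S^1}\bigl[\tfrac{2}{p_\sigma^2}-\tfrac{2\rho K_1(\rho p_\sigma)}{p_\sigma}\bigr]\,d\sigma$, $\cA_2(\rho):=\rho\int_{S^1}p_\sigma K_1(\rho p_\sigma)\,d\sigma$, and $\cA_3(\rho):=\tfrac{\rho^2}{2}\int_{S^1}p_\sigma^2K_0(\rho p_\sigma)\,d\sigma$, so that $V(\rho)=\cA_1(\rho)+\cA_2(\rho)-\cA_3(\rho)-2\pi$.

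The heart of the argument is the evaluation of these three $S^1$-integrals via the Graf-type addition identity
\[
\int_{S^1}K_0(\rho p_\sigma)\,d\sigma=2\pi I_0(\rho)K_0(\rho),
\]
a consequence of the addition formula $K_0(\rho|x-y|)=\sum_{n\in\Z}I_n(\rho)K_n(\rho)e^{in\theta}$ for $|x|=|y|=1$. Differentiating the identity once in $\rho$, and using the recurrences $(zK_1)'=-zK_0$, $(\rho I_1)'=\rho I_0$ together with the Wronskian $I_0K_1+I_1K_0=\tfrac{1}{\rho}$, produces $\cA_2(\rho)=2\pi-4\pi\rho I_1(\rho)K_0(\rho)$. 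Differentiating a second time, after noting that $\rho p_\sigma^2K_0(\rho p_\sigma)=-\partial_\rho[\rho p_\sigma K_1(\rho p_\sigma)]$, yields $\cA_3(\rho)=2\pi\rho^2[I_0(\rho)K_0(\rho)-I_1(\rho)K_1(\rho)]$. The main obstacle is the term $\cA_1(\rho)$, whose two pieces are individually non-integrable on $S^1$ and therefore cannot be expanded separately via the addition formula. I sidestep this by using the primitive identity $1-zK_1(z)=\int_0^z wK_0(w)\,dw$ (immediate from $(zK_1)'=-zK_0$) to write, for each fixed $\sigma$, $\tfrac{2}{p_\sigma^2}-\tfrac{2\rho K_1(\rho p_\sigma)}{p_\sigma}=2\int_0^\rho uK_0(up_\sigma)\,du$, and then invoking Fubini together with the Graf identity to get $\cA_1(\rho)=4\pi\int_0^\rho uI_0(u)K_0(u)\,du$. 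The last integral is evaluated via $\tfrac{d}{du}[u^2(I_0K_0+I_1K_1)]=2uI_0K_0$, which follows from the recurrences above, giving $\cA_1(\rho)=2\pi\rho^2[I_0(\rho)K_0(\rho)+I_1(\rho)K_1(\rho)]$.

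Substituting the three evaluations back, the $I_0K_0$ contributions cancel exactly and the constant $-2\pi$ is absorbed, leaving
\[
V(\rho)=4\pi\rho^2I_1(\rho)K_1(\rho)-4\pi\rho I_1(\rho)K_0(\rho)=4\pi\rho I_1(\rho)\bigl[\rho K_1(\rho)-K_0(\rho)\bigr],
\]
which is \eqref{eq:lkhk}.
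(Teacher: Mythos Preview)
Your proof is correct and follows essentially the same architecture as the paper's: both start from the representation of $V(\rho)$ in terms of $G$, $G_0$, $G_1$, evaluate the inner $\tau$-integrals via the cosine transforms yielding $K_1$ and $K_2$, reduce to three $S^1$-integrals (your $\cA_1,\cA_2,\cA_3$ coincide with the paper's $V_1,V_2,V_3$), and then combine using the Wronskian. The only genuine difference is in how those three $S^1$-integrals are evaluated. The paper parametrizes $p_\sigma=2\sin\theta$, applies the trigonometric identities $2\cos^2\theta=1+\cos 2\theta$, $2\sin^2\theta=1-\cos 2\theta$, and invokes the table formula $\int_0^{\pi/2}K_{\nu-m}(2\rho\cos\theta)\cos((m+\nu)\theta)\,d\theta=(-1)^m\tfrac{\pi}{2}I_m(\rho)K_\nu(\rho)$. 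You instead start from the single Graf-type identity $\int_{S^1}K_0(\rho p_\sigma)\,d\sigma=2\pi I_0(\rho)K_0(\rho)$ and obtain $\cA_2,\cA_3$ by differentiating it in $\rho$, and $\cA_1$ by the primitive trick $1-zK_1(z)=\int_0^z wK_0(w)\,dw$ combined with the explicit antiderivative $\tfrac{d}{du}\bigl[u^2(I_0K_0+I_1K_1)\bigr]=2uI_0K_0$. Your route is slightly more self-contained and avoids the intermediate trigonometric reductions; the paper's route is a direct lookup once the integrals are in the right form. Both hinge on the same underlying addition theorem, so neither gains real generality over the other.
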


\proof
To get \eqref{eq:lkhk}, we recall that
$$
L_\l e_k(s)= PV \int_{\R}\{e_k(s)-e_k(s-\lambda\t )\}G(\t) d\t+\int_{\R}e_k(s-\lambda \t)F(\t)d \t-2\pi e_k(s)
$$
by (\ref{eq3diff}). Using the identity
\begin{equation}\label{eq: rapeel}
e_k(s-\lambda\t )=\cos\bigl(k (s-
                   \lambda \t)\bigr) =\cos(ks) \cos(k \lambda \t)+\sin(ks)\sin(k \lambda \t),
\end{equation} 
the evenness of the  functions $G$  and $F$ and the oddness of the sine
function, we get 
$$
L_\lambda  e_k(s)= \biggl(\int_{\R}\bigl(1-e_k(\lambda \t)\bigr)G(\t) d\t+\int_{\R} e_k(\lambda \t))F(\tau)d \tau-2\pi\biggl) e_k(s), 
$$
and this proves \eqref{eq:L-lam-ek-eq-nuk-ek} with
\begin{equation}\label{eq:valprobiendef}
V( \rho):=\int_{\R}\{1-\cos(\rho t) \}G(t) dt+ \int_{\R} \cos(\rho t)F(t)d t-2\pi.
\end{equation}
It thus remains to prove that (\ref{eq:lkhk}) holds with this definition of $V$.
For this we write 
$$1-\cos(\rho  t)=\rho  \int^1_{0} t\sin( r \rho t)\textrm{d}r, $$
which gives, by (\ref{eq:theta-sigma-formula}) and the integral identity \eqref{eq:integral-identity-1-special-case}, 
\begin{align}
  &\int_{\R}\{1-\cos(\rho  t)\}G(t) dt=\rho \int_{\R} \int_{0}^{1} \int_{S^1}
    \frac{t\sin(r \rho  t)}{(t^2+p_\sigma^2)^{\frac{3}{2}}}d\s  dt  d r \nonumber\\
  &=4 \rho \int_{\R}  \int_{0}^{1} \int_{0}^{\frac{\pi}{2}}\biggl(  \frac{t\sin( r \rho  t)}{(t^2+4\sin^2(\th))^{\frac{3}{2}}}  dt \biggl)d\th  dt  d r \nonumber\\
  &=8 \rho \int_{0}^{\frac{\pi}{2}} \int_{0}^{1} \biggl(\int_{0}^{\infty}  \frac{t\sin( r \rho  t)}{(t^2+(2\sin \th)^2)^{\frac{3}{2}}}  dt \biggl)d r d\th \nonumber \\
  &= 8 \rho^2 \int_{0}^{\frac{\pi}{2}} \int_{0}^{1}  r  K_0(2\rho r \sin \th )drd\th. \nonumber 
\end{align}
Since, by (\ref{eq:formderibess}) and (\ref{eqasymp}), we also have
\begin{align*}
  \int_{0}^{1}  r  K_0(2\rho r \sin \th )dr &= (2\rho  \sin \th)^{-2} \int_0^{2\rho \sin \th} r K_0(r)\,dr\\
  &=
\frac{1}{(2\rho)^2\sin^2(\th)}\Bigl(1-2 \rho \sin(\th)K_1(2\rho \sin(\th))\Bigl),
\end{align*}
we conclude that 
\begin{align}
  \int_{\R}\{1-\cos(\rho  t)\}G(t) dt&=2 \int_{0}^{\frac{\pi}{2}}\frac{1}{\sin^2(\th)}\Bigl(1-2 \rho \sin(\th)K_1(2\rho \sin(\th))\Bigl)d\th. \nonumber\\
&= -2 \int_{0}^{\frac{\pi}{2}}\cot(\th) \frac{d}{d\th} \Bigl(1-2 \rho \sin(\th)K_1(2\rho \sin(\th))\Bigl)d\th \nonumber\\
&= 8 \rho^2 \int_{0}^{\frac{\pi}{2}}\cos^2(\th) K_0(2\rho \sin(\th))d\th  \label{eq:valbaseel}
\end{align}
Here we used integration by parts and (\ref{eq:formderibess}), noting in particular that $1- x K_1(x) =O(x^2)$ as $x \to 0^+$ as a consequence of (\ref{eq:formderibess}) and (\ref{eqasymp}).

Next we compute, using (\ref{eq:theta-sigma-formula}), Fubini's theorem and (\ref{eq:integral-identity-2-special-case}), 
\begin{align}
&  \int_{\R} \cos(\rho t) G_0(t) dt= \int_{0}^{\infty}  \int_{S^1}
                 \frac{\cos(\rho t)p_\s^2}{(t^2+ p_\s^2)^{\frac{3}{2}}}d\s  dt \label{eq:valbaseel1}\\
  &= 32 \int_{0}^{\frac{\pi}{2}} \int_{0}^{\infty}  \frac{\cos(\rho t) \sin^2\th}{(t^2+4\sin^2 \th))^{\frac{3}{2}}}  dt d\th
=16 \rho \int_{0}^{\frac{\pi}{2}} \sin(\th)  K_1(2\rho \sin(\th))d\th. \nonumber
\end{align}
Proceeding in the same manner, using now (\ref{eq:theta-sigma-formula}), Fubini's theorem and (\ref{eq:integral-identity-2-special-case}), we get
\begin{align}\label{eq:valbaseel2}
\int_{\R}  \cos(\rho t)G_1(t) dt&=\frac{32}{3}\rho^2  \int_{0}^{\frac{\pi}{2}}  \sin^2(\th)  K_2(2\rho \sin(\th))d\th\nonumber\\
&=\frac{32}{3}\rho  \int_{0}^{\frac{\pi}{2}}  \sin(\th)  K_1(2\rho \sin(\th))d\th+\frac{32}{3}\rho^2  \int_{0}^{\frac{\pi}{2}}\sin^2(\th)  K_0(2\rho \sin(\th))d\th,
\end{align}
where we have used \eqref{eq:valK2} to get the last equality. Since $F=G_0-3/4G_1$, it follows  from  \eqref{eq:valbaseel1} and  \eqref{eq:valbaseel2} that
\begin{align} \label{eq:calFconvole}
\int_{\R} e_k(\l t)F(t)d t&=8\rho \int_{0}^{\frac{\pi}{2}}  \sin(\th)  K_1(2\rho  \sin(\th))d\th-8\rho^2 \int_{0}^{\frac{\pi}{2}} \sin^2(\th)  K_0(2\rho \sin(\th))d\th\nonumber\\
&=V_2(\rho)-V_3(\rho),
\end{align}
with
$$
V_2(\rho):= 8\rho \int_{0}^{\frac{\pi}{2}}  \sin(\th)  K_1(2\rho  \sin(\th))d\th\quad \textrm{and}\quad 
V_3(\rho):=  8\rho^2 \int_{0}^{\frac{\pi}{2}}  \sin^2(\th)  K_0(2\rho \sin(\th))d\th.
$$
We also set 
$$V_1(\rho):= 8 \rho^2 \int_{0}^{\frac{\pi}{2}}\cos(\th) K_0(2\rho \sin(\th))d\th.$$
Then \eqref{eq:valprobiendef}, \eqref{eq:valbaseel} and \eqref{eq:calFconvole}   yield
\begin{align} \label{eq:valbaE2}
V(\rho)&=V_1(\rho)+  V_2(\rho)-V_3(\rho)-2\pi.
\end{align}

We now derive further identities for the  functions $V_1$, $V_2$ and  $V_3$.

Using the identity $2 \sin^2 \th = 1 - \cos(2\th)$ and (\ref{eq:final-integral-identity}), we find that 
\begin{align}\label{compV1}
V_1(\rho)&=8 \rho^2 \int_{0}^{\frac{\pi}{2}}\cos^2(\th) K_0(2\rho \sin(\th))d\th = 8\rho^2 \int_{0}^{\frac{\pi}{2}} \sin^2(\th)K_0(2\rho \cos (\th) )d\th \nonumber\\
&=4\rho^2\biggl(\int_{0}^{\frac{\pi}{2}} K_0(2\rho \cos (\th) )d\th-\int_{0}^{\frac{\pi}{2}} K_0(2\rho \cos (\th) ) \cos (2\th)d\th\biggl)\nonumber\\
&=2\pi \rho^2 \biggl(I_0(\rho)K_0(\rho)+I_1(\rho)K_1(\rho)\biggl).
\end{align}
We also have, due to the identity $2 \cos^2 \th = 1 + \cos(2\th)$ and (\ref{eq:final-integral-identity}),
\begin{align}\label{eq:valV3}
V_3(\rho)&= 8\rho^2 \int_{0}^{\frac{\pi}{2}}  \sin^2(\th)  K_0(2\rho \sin(\th))d\th= 8\rho^2 \int_{0}^{\frac{\pi}{2}} \cos^2(\th)K_0(2\rho \cos (\th) )d\th \nonumber\\
&= 4\rho^2\biggl(\int_{0}^{\frac{\pi}{2}} K_0(2\rho \cos (\th) )d\th + \int_{0}^{\frac{\pi}{2}} K_0(2\rho \cos (\th) )\cos(2\th)d\th\biggr)\nonumber\\
&= 2\pi \rho^2 \biggl( I_0(\rho)K_0(\rho)-I_1(\rho)K_1(\rho)\biggl),
\end{align}
where we used again (\ref{eq:final-integral-identity}). Finally, we compute $V_2$. By Lebesgue's theorem, (\ref{eqasymp}) and \eqref{eqlimi11}, 
\begin{align} \label{eq:limV22}
\lim \limits_{\rho \to 0^+}  V_2(\rho) = 2 \pi.
\end{align}
Moreover, $V_2$ is  
differentiable on $\R_{+}$, and from \eqref{eq:formderibess} and \eqref{eq:valV3} we deduce that 
\begin{align} \label{eq:defV2}
V_2'(\rho) &=-16\rho  \int_{0}^{\frac{\pi}{2}} \sin^2(\th)K_0(2\rho \sin (\th) )d\th=-\frac{2}{\rho}V_3(\rho)\nonumber\\
&=-4 \pi\rho\biggl(I_0(\rho)K_0(\rho)-I_1(\rho)K_1(\rho)\bigg).
\end{align} 
Setting now $W(\rho):=4\pi \rho I_0(\rho)K_1(\rho)$, we see that $\lim \limits_{\rho \to 0^+}  W(\rho) = 4\pi$ by (\ref{eqasymp}) and since $I_0(0)=1$, whereas 
$$W'(\rho)=-4\pi\rho\biggl(I_0(\rho)K_0(\rho)-I_1(\rho)K_1(\rho)\bigg)=V_2'(\rho).$$
by \eqref{relateI} and \eqref{relateK}. This together with \eqref{eq:limV22} implies that 
\begin{align}\label{eq:val4V2}
V_2(\rho)&=W(\rho)-2\pi = 4\pi\rho I_0(\rho)K_1(\rho)-2\pi.
\end{align}
Inserting the identities \eqref{compV1}, \eqref{eq:valV3}, \eqref{eq:val4V2} in \eqref{eq:valbaE2}, we obtain
\begin{align}\label{eq:ExpressssV}
V(\rho)= 4\pi\rho^2 I_1(\rho)K_1(\rho) +4\pi\rho I_0(\rho)K_1(\rho)-4\pi.
\end{align}
Finally, we observe that 
$$
4\pi\rho I_0(\rho)K_1(\rho)=4\pi-4\pi\rho I_1(\rho)K_0(\rho)
$$
by the Wronskian identity \eqref{relawrosk}, and we get \eqref{eq:lkhk}.
\QED 

\begin{Lemma}\label{asymtotic}
The function $V$ defined in \eqref{eq:valprobiendef}  has the following asymptotics:
\begin{enumerate}
\item[(i)] $\lim \limits_{\rho \to 0^+}  V(\rho) = 0$. 
\item[(ii)]  $\lim \limits_{\rho \to +\infty} \frac{V(\rho)}{\rho}=2\pi.$ In particular, $\lim \limits_{\rho \to + \infty} V(\rho)= +\infty$.
\end{enumerate}
\end{Lemma}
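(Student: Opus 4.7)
The plan is to derive both asymptotics directly from the closed-form expression
$$V(\rho)= 4\pi\rho\, I_1(\rho)\bigl(\rho K_1(\rho)-K_0(\rho)\bigr)$$
established in Lemma \ref{lem:eigenval}, by invoking the small-argument and large-argument asymptotics of the modified Bessel functions $I_0,I_1,K_0,K_1$ collected in the appendix (in particular \eqref{eqasymp} and \eqref{eqlimi11}).

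For part (i), the idea is that $\rho I_1(\rho)$ vanishes quadratically at $0$ while $\rho K_1(\rho)-K_0(\rho)$ blows up only logarithmically. More precisely, the appendix asymptotics give $I_1(\rho)\sim \rho/2$, $K_1(\rho)\sim 1/\rho$ and $K_0(\rho)\sim -\log\rho$ as $\rho\to 0^+$. Hence $\rho K_1(\rho)\to 1$ and
$$\rho I_1(\rho)\bigl(\rho K_1(\rho)-K_0(\rho)\bigr)=O\bigl(\rho^{2}|\log\rho|\bigr)\longrightarrow 0,$$
which yields $V(\rho)\to 0$.

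For part (ii), the idea is to divide through by $\rho$ and write
$$\frac{V(\rho)}{\rho}=4\pi\bigl(\rho I_1(\rho)K_1(\rho)-I_1(\rho)K_0(\rho)\bigr),$$
and then use the large-$\rho$ asymptotics $I_\nu(\rho)\sim e^{\rho}/\sqrt{2\pi\rho}$ and $K_\nu(\rho)\sim \sqrt{\pi/(2\rho)}\,e^{-\rho}$ for $\nu\in\{0,1\}$. These give $I_1(\rho)K_1(\rho)\sim 1/(2\rho)$ and $I_1(\rho)K_0(\rho)\sim 1/(2\rho)$, so that $\rho I_1(\rho)K_1(\rho)\to 1/2$ and $I_1(\rho)K_0(\rho)\to 0$. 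Therefore $V(\rho)/\rho\to 4\pi\cdot 1/2=2\pi$, and in particular $V(\rho)\to+\infty$.

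I do not expect any real obstacle here: the statement is a direct consequence of the standard Bessel asymptotics collected in the appendix, and the only mild care needed is to check that the leading-order cancellations (especially the subtraction $\rho K_1(\rho)-K_0(\rho)$ near zero and the subtraction of the two same-order terms at infinity) are handled with enough precision; using the explicit leading-order formulas above, rather than mere $O$-bounds, is enough to conclude both limits.
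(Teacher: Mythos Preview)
Your proof is correct and follows essentially the same approach as the paper: both use the closed form $V(\rho)=4\pi\rho I_1(\rho)\bigl(\rho K_1(\rho)-K_0(\rho)\bigr)$ from Lemma~\ref{lem:eigenval} together with the small- and large-argument Bessel asymptotics from the appendix. The only cosmetic differences are that for (i) the paper factors as $4\pi I_1(\rho)\bigl(\rho^2 K_1(\rho)-\rho K_0(\rho)\bigr)$ so that only the limits in \eqref{eqasymp} are needed (the asymptotic $K_0(\rho)\sim -\log\rho$ you invoke is standard but not actually stated there), and for (ii) the paper uses a sandwich via $K_0<K_1$ from \eqref{eqCompK0K1} rather than splitting the quotient into two terms.
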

\proof
Recalling  \eqref{eq:lkhk}, we find that 
\begin{align} \label{eq:defVAL2}
V(\rho)= 4\pi  \rho I_1(\rho)\biggl(\rho K_1(\rho)- K_0(\rho)\biggl)=4\pi I_1(\rho)\biggl(\rho^2 K_1(\rho)-\rho K_0(\rho)\biggl) \to 0 \qquad \text{as $\rho \to 0^+$}
\end{align}
as a consequence of \eqref{eqasymp} and the fact that $I_1(\rho) \to I_1(0)=0$ as $\rho \to 0^+$.

To prove (ii), we use that $K_1>K_0$ (see \eqref{eqCompK0K1}) and the bound \eqref{eq:defVAL2} to obtain that 
\begin{equation} \label{eq:defVALinfboud}
4\pi(\rho-1)\rho I_1(\rho)K_0(\rho)\le V(\rho)\le 4\pi  \rho^2 I_1(\rho) K_1(\rho) \qquad \text{for $\rho>0$.}
\end{equation}
Moreover, by the asymptotics given in (\ref{eq:asyI1}), we have
\begin{align}\label{eq:limIK}
 \lim  \limits_{\rho \to +\infty } \rho I_1(\rho)K_0(\rho)=\lim  \limits_{\rho \to +\infty } \rho I_1(\rho)K_1(\rho)=\frac{1}{2}.
\end{align}
Combining (\ref{eq:defVALinfboud}) and (\ref{eq:limIK}) readily yields $\lim \limits_{\rho \to +\infty} \frac{V(\rho)}{\rho}=2\pi$, as claimed in (ii).
\QED

The next lemma is of key importance for the application of the Crandall-Rabinowitz theorem in Section~\ref{ss:Existt-d-per-excepti} below.

\begin{Lemma}\label{derv222}
Let  $V$  be the function defined in \eqref{eq:lkhk}. Then for all $\rho \in (0, +\infty)$,
\begin{equation}\label{eq:derVVnew1}
V' (\rho)=4\pi\rho^2\biggl(I_0(\rho)K_1(\rho)-I_1(\rho)K_0(\rho)\biggl)-4\pi\rho\biggl(I_0(\rho)K_0(\rho)-I_1(\rho)K_1(\rho)\biggl).
\end{equation}
Moreover,
\begin{equation}
\label{derv222-second-claim}
\text{there exists a unique $\lambda_*> 0$ satisfying $V(\lambda_*)=0$, and we have $V'(\lambda_*)>0.$}
\end{equation}
\end{Lemma}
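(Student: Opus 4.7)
The argument falls into two parts corresponding to the two claims. For the derivative formula, I would start from the factored expression $V(\rho)=4\pi\rho^2 I_1(\rho)K_1(\rho)-4\pi\rho I_1(\rho)K_0(\rho)$ and differentiate term by term using the standard recurrences $I_0'=I_1$, $K_0'=-K_1$, $I_1'(\rho)=I_0(\rho)-I_1(\rho)/\rho$ and $K_1'(\rho)=-K_0(\rho)-K_1(\rho)/\rho$ from \eqref{relateI}--\eqref{relateK}. A short calculation gives
\[
\frac{d}{d\rho}\bigl[\rho^2 I_1 K_1\bigr]=\rho^2(I_0K_1-I_1K_0),\qquad \frac{d}{d\rho}\bigl[\rho I_1 K_0\bigr]=\rho(I_0K_0-I_1K_1),
\]
and assembling these yields \eqref{eq:derVVnew1}. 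This first part is essentially mechanical.

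For the second claim, my strategy is to reduce to the simpler function $h(\rho):=\rho K_1(\rho)-K_0(\rho)$ via the factorization $V(\rho)=4\pi\rho I_1(\rho)\,h(\rho)$; since $I_1(\rho)>0$ on $(0,\infty)$, the positive zeros of $V$ and of $h$ coincide. Existence of at least one zero then follows from the intermediate value theorem: the small-$\rho$ asymptotics of \eqref{eqasymp} give $\rho K_1(\rho)\to 1$ and $K_0(\rho)\sim-\log\rho\to+\infty$, so $h(\rho)\to-\infty$ as $\rho\to 0^+$ and hence $V<0$ near $0$, while Lemma~\ref{asymtotic}(ii) supplies $V(\rho)\to+\infty$.

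The heart of the matter is the sign of $V'$ at any positive zero $\lambda_*$. The equation $h(\lambda_*)=0$ reads $\lambda_* K_1(\lambda_*)=K_0(\lambda_*)$, which immediately gives $\lambda_*=K_0(\lambda_*)/K_1(\lambda_*)<1$ thanks to the inequality $K_1>K_0$ recorded in \eqref{eqCompK0K1}. Substituting the identity $K_0(\lambda_*)=\lambda_* K_1(\lambda_*)$ into \eqref{eq:derVVnew1} produces a cancellation of the two $I_0$-terms and a short computation collapses $V'(\lambda_*)$ to
\[
V'(\lambda_*) = 4\pi\, I_1(\lambda_*)\, K_0(\lambda_*)\,\bigl(1-\lambda_*^2\bigr),
\]
which is strictly positive since $I_1(\lambda_*),K_0(\lambda_*)>0$ and $\lambda_*<1$. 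I expect this algebraic collapse to be the only subtle step; the rest of the derivation is elementary together with the Bessel identities of the appendix.

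Uniqueness then comes essentially for free: since $V'>0$ at every positive zero, each zero is isolated. If there were two distinct positive zeros $\lambda_1<\lambda_2$, one may pick them consecutive (no zero of $V$ lies in between); but $V'(\lambda_1)>0$ forces $V>0$ just to the right of $\lambda_1$ while $V'(\lambda_2)>0$ forces $V<0$ just to the left of $\lambda_2$, so by continuity $V$ would vanish somewhere in $(\lambda_1,\lambda_2)$, contradicting consecutiveness. Hence the zero produced by the intermediate value theorem is unique, and \eqref{derv222-second-claim} is established.
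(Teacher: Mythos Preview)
Your argument is correct, and in the decisive step it is actually cleaner than the paper's. The derivative formula \eqref{eq:derVVnew1} is obtained identically in both. For the second claim, however, the paper does not carry out the full algebraic collapse you found. After inserting $\rho K_1(\rho)=K_0(\rho)$ into \eqref{eq:derVVnew1} the paper stops at
\[
\frac{V'(\rho)}{4\pi}=\rho^2\bigl(I_0K_1-I_1K_0\bigr)-\rho I_0K_0+I_1K_0,
\]
and then invokes the rational bounds \eqref{eq:compI1I00}--\eqref{eq:compK1K0} to first confine any zero to the explicit window $\bigl(\tfrac12,\tfrac{1+\sqrt{17}}{8}\bigr)$ and then to force positivity of the right-hand side there via a polynomial sign check. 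Your observation that the same substitution also kills the $I_0$--terms, giving $V'(\lambda_*)=4\pi I_1(\lambda_*)K_0(\lambda_*)(1-\lambda_*^2)$, short-circuits all of this: positivity follows from $\lambda_*=K_0(\lambda_*)/K_1(\lambda_*)<1$ via \eqref{eqCompK0K1} alone. The trade-off is that the paper's route yields a concrete numerical localization of $\lambda_*$, while yours gives only $\lambda_*<1$; for the purposes of the lemma your route is shorter and loses nothing.

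One small remark: the asymptotic $K_0(\rho)\sim-\log\rho$ you invoke for $h(\rho)\to-\infty$ is standard but is not among the facts recorded in \eqref{eqasymp}; if you want to stay strictly within the appendix, the right inequality in \eqref{eq:compK1K0} gives $\rho K_1(\rho)<\tfrac{1+2\rho}{2}K_0(\rho)$ and hence $h(\rho)<\tfrac{2\rho-1}{2}K_0(\rho)<0$ on $(0,\tfrac12)$, which is all you need for the existence step.
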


\proof
By a straighforward computation using \eqref{eq:lkhk}, (\ref{relateI}) and \eqref{relateK}, we get  \eqref{eq:derVVnew1}.

It thus remains to prove (\ref{derv222-second-claim}). We start by observing  from the right inequality in \eqref{eq:compK1K0} 
that
\begin{equation}
  \label{eq:extra-inequ-1}
\rho K_1(\rho)< \frac{1+2\rho}{2} K_0(\rho) \qquad \text{for all $\rho >0$.} 
\end{equation}
Inserting this in \eqref{eq:lkhk},
we obtain
\begin{align}\label{eq:existzeroV}
V(\rho)&< 2\pi\rho I_1(\rho)(2\rho-1)K_0(\rho).
\end{align}
Hence,
\begin{align}\label{eq:NEGATIVDER}
V(\rho)<0\quad \textrm{for all}\quad \rho \in (0, \frac{1}{2}).
\end{align}
As a consequence of Lemma~\ref{asymtotic} and \eqref{eq:NEGATIVDER}, there exists a number $\rho_*>\frac{1}{2}$ such that $V(\rho_*)=0$.

Hence, to show (\ref{derv222-second-claim}), it now suffices to show that  $V(\rho)=0$ implies $V'(\rho)>0$ for $\rho >\frac{1}{2}$. Let $\rho>\frac{1}{2}$ such that $V(\rho)=0$. Then from  \eqref{eq:lkhk}
\begin{align}\label{eq:vvanis}
\rho K_1(\rho)=K_0(\rho), \quad \rho>\frac{1}{2}.
\end{align}
This with (\ref{eq:extra-inequ-1}) and \eqref{eq:compK1K0} yields the inequality
$$
  \rho K_1(\rho)=K_0(\rho)< \frac{1+4\rho}{3+4\rho}K_1(\rho),
$$
which allows  to see that any zero  of $V$ is confined in the interval
\begin{align}\label{eq:rangenBif23}
\frac{1}{2} <\rho < \frac{1+\sqrt{17}}{8}.
\end{align}
We now prove that
\begin{equation}
  \label{eq:necess-cond-uniqueness}
\text{$V'(\rho) >0$ for all $\rho \in \bigl(\frac{1}{2}, \frac{1+\sqrt{17}}{8}\bigr)$ with $V(\rho)=0$.}
\end{equation}
After inserting \eqref{eq:vvanis} in \eqref{eq:derVVnew1},
\begin{align}\label{eq:dercomparrr}
\frac{ V' (\rho)}{4\pi}&=\rho^2\biggl(I_0(\rho)K_1(\rho)-I_1(\rho)K_0(\rho)\biggl)-\rho I_0(\rho)K_0(\rho)+ I_1(\rho)K_0(\rho).
\end{align}
Set  
\begin{equation}\label{eq:defAB}
B(\rho):=I_0(\rho)K_1(\rho)-I_1(\rho)K_0(\rho)\quad \text{and}\quad 
D(\rho):=I_0(\rho)K_0(\rho) 
\end{equation}
so that 
\begin{align}\label{eq:dercomparrr33}
\frac{ V' (\rho)}{4\pi}&=\rho^2 B(\rho)-\rho D(\rho)+I_1(\rho)K_0(\rho).
\end{align}
By computation using \eqref{eq:compI1I00} and \eqref{eq:compK1K0}, 
\begin{align}
&\rho B(\rho)> \frac{\rho(6+7\rho-4\rho^2)}{2(1+4\rho)} D(\rho)\quad\textrm{and}\quad  I_1(\rho)K_0(\rho)> \frac{\rho}{2+\rho}D(\rho)\label{eq:compBD2}
\end{align}
and \eqref{eq:dercomparrr33} yields, after computation,
\begin{align}\label{eq:dercomparrr2}
\frac{ V' (\rho)}{4\pi}&=\rho^2 B(\rho)-\rho D(\rho)+I_1(\rho)K_0(\rho)> \biggl(\rho B(\rho)-\frac{1+\rho}{2+\rho}\biggl)\rho D(\rho)\nonumber\\
&>\Bigl(\dfrac{2 \rho +12 \rho^2-\rho^3-4 \rho^4-2}{2(1+4 \rho)(2+\rho)}\Bigl)\rho D(\rho).
\end{align}
Finally, a simple comparison using the bounds in  \eqref{eq:rangenBif23} shows that the function $\rho\mapsto 2 \rho +12 \rho^2-\rho^3-4 \rho^4-2$ is positive on the intervall $\bigl(\frac{1}{2}, \frac{1+\sqrt{17}}{8}\bigr)$. This shows (\ref{eq:necess-cond-uniqueness}), and from (\ref{eq:necess-cond-uniqueness}) it follows that $V$ has at most one zero in the interval $\bigl(\frac{1}{2}, \frac{1+\sqrt{17}}{8}\bigr)$. This ends the proof.
\QED

\section{Existence of exceptional domains}\label{ss:Existt-d-per-excepti}

In the following, we consider the fractional Sobolev spaces
\begin{equation}
  \label{eq:def-hpe}
H^{\s}_{p,e} := \Bigl \{v \in H^{\s}_{loc}(\R) \::\: \text{
$v$ even and $2\pi$-periodic}\Bigl \}
\end{equation}
for $\s\geq 0$, and we put $L^2_{p,e}:= H^{0}_{p,e}$. Note that
$L^2_{p,e}$ is a Hilbert space with scalar product
$$
(u,v) \mapsto \langle u,v \rangle_{L^2_{p,e}} :=
\int_{-\pi}^{\pi} u(t)v(t)\,dt \qquad \text{for $u,v \in
L^2_{p,e}$.}
$$
We denote the induced norm by $\|\cdot\|_{L^2_{p,e}}$. Consider the functions $e_k$, $k \in \N$ defined in (\ref{eq:def-e-k}). Since
$\|e_k\|_{L^2_{p,e}}=\sqrt{\pi}$, the set
$\{\frac{e_k}{\sqrt{\pi}},\:\,k\in \N\}$ forms  a
complete orthonormal basis of $L^2_{p,e}$.
Moreover, $H^\s_{p,e}
\subset L^2_{p,e}$ is characterized as the subspace of all functions
$v \in L^2_{p,e}$ such that
\begin{align}\label{eqchatacte}
\sum_{k \in \N } (1+k^2)^{\s} \langle v, e_k
\rangle_{L^2_{p,e}} ^2 < \infty.
\end{align}
Therefore, $H^\s_{p,e}$ is also a Hilbert space with scalar product
\begin{equation}
  \label{eq:scp-hj}
(u,v) \mapsto  \sum_{k \in \N } (1+k^2)^{\s} \langle u, e_k
\rangle_{L^2_{p,e}}  \langle v, e_k \rangle_{L^2_{p,e}}  \qquad
\text{for $u,v \in H^\s_{p,e}$.}
\end{equation}

Defining 
\begin{equation}
  \label{eq:def-Vell}
W_k:=\textrm{span}\left\{ e_k \right\}
\,\subset \,\bigcap_{j \in \N} H^j_{p,e}
\end{equation}
for $k \in  \N$, we see, by Lemma  \ref{lem:eigenval}, that the spaces $W_k$ are precisely the eigenspaces of the operator $L_\l$ in given in \eqref{eq:def-L-lambda} corresponding to the eigenvalues $V(\l k)$, i.e., we have  
 \be\label{eq:-Lv-Fpurier-slab}
 L_\l v =V(\l k) v \qquad \textrm{ for every $v\in W_k$}.
\ee
We also consider their orthogonal complements in $L^2_{p,e}$, given by
$$
W_k^\perp:=\left\{ w\in L^2_{p,e} \,:\,
\int_{-\pi}^{\pi} \cos(k s) w(s)\,ds=0 \right\}.
$$
For fixed $\alpha \in (0,1)$  as before, we now define the spaces 
$$
X:=\biggl\{  \phi: \mathbb{R} \rightarrow \mathbb{R},\quad  \phi\in  C^{2,\alpha}(\mathbb{R}) \textrm{ is even and $2\pi$-periodic}\quad \textrm{and} \quad \langle \phi, 1  \rangle_{L^2_{p,e}}=0\biggl\},
$$
and
$$
Y:=\biggl\{  \phi: \mathbb{R} \rightarrow \mathbb{R},\quad  \phi\in  C^{1,\alpha}(\mathbb{R}) \textrm{ is even and $2\pi$-periodic} \quad  \textrm{and} \quad \langle \phi, 1  \rangle_{L^2_{p,e}}=0 \biggl\}.
$$
In order  to prove our main result, we shall apply the 
Crandall-Rabinowitz bifurcation theorem to the smooth nonlinear map
\begin{equation}\label{CR-map}
\Phi: \cD_\Phi \subset \R \times X  \to Y, \qquad \qquad \Phi(\l,\varphi): = H(\lambda + \phi)+ 2\pi 
\end{equation}
defined on the open set
\be \label{eq:DPhi} \cD_\Phi:= \left\{(\l,\varphi)\::\: \l>0,\:
\varphi \in X,\, \min_{\R} \varphi>-\l\right\} \;\subset
\; \R \times X.  \ee 
As we have proved in Proposition~\ref{prop:smooth-ovH}, the map $\Phi$ is smooth, and by \eqref{HatCos} we have
$$
\Phi (\l,0)=0 \qquad \text{for every $\l>0$.}
$$

To apply the Crandall-Rabinowitz bifurcation theorem, we need to verify some assumptions regarding the linearized operators $D_\varphi \Phi(\l,0): X \to Y$, $\lambda>0$, which are given as
\begin{equation}
  \label{eq:relationship-D-phi-L-lambda}
D_\varphi \Phi(\l,0) = -\frac{1}{\lambda}L_\lambda
\end{equation}
with $L_{\lambda}$ defined in (\ref{eq:def-L-lambda}). To proceed, we need some preliminaries. In particular,  regularity results for the following  equation 
\begin{align} \label{condighallapla}
PV \int_{\R}\{v(s)-v(\t )\}G(s-\t) d\t +\l v= f(s), \quad s\in \R,
\end{align}
will be used for the proof of Proposition \eqref{prop:All-HYPcRANDAL} below. 
From  Lemma \ref{LemmabehaveG}, we have $|G(t)| = O(|t|^{-3}) \qquad \text{as $|t| \to \infty$.}$ Furthermore since  $G(-\t)=G(\t)$ for all $\t>0$, weak solutions $v\in H^{1/2}_{p,e} $ of  \eqref{condighallapla}  are characterised by 
\begin{align}\label{weaksol}
  \frac{1}{2}\int_{\R} \int_{\R}\{v(s)-v(\t )\}\{\psi(s)-\psi(\t )\}G(s-\t) ds
  d\t =
  \int_{\R} (f -\l v)\psi\, d\t \quad \text{for all $\psi \in C^\infty_c(\R)$.}
\end{align}

\begin{Lemma} \label{LemmTworegul}
Let $\l\in \R$, $\s\in (0,1)$ and  $f\in L^{2}_{p,e}$.  Let $v\in H^{1/2}_{p,e} $ be a weak solution of 
\begin{align} \label{condighallapla1}
PV \int_{\R}\{v(s)-v(\t )\}G(s-\t) d\t +\l v= f(s), \quad s\in \R
\end{align}
in the sense that \eqref{weaksol} holds.
Then, if $f\in C^{m,\s}(\R)$ for some $m\in \{0,1\}$,  we have  $v\in C^{m+1,\s}(\R)$, and (\ref{condighallapla1}) holds pointwisely.
\end{Lemma}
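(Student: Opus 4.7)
\emph{Proof plan.} By Lemma~\ref{LemmabehaveG}, we have $G(t)=4g(t)/t^2$ with $g\in C^1([0,\infty))$ and $g(0)>0$, so the kernel $G$ differs from a positive multiple of $1/t^2$ (the kernel of the half-Laplacian $(-\Delta)^{1/2}$ on $\R$) only by an integrable correction near the origin, and decays as $O(|t|^{-3})$ at infinity. Consequently, the operator $\mathcal{L}v(s):=PV\int_\R (v(s)-v(\tau))G(s-\tau)\,d\tau$ is a first-order elliptic singular integral operator with the same principal symbol as a positive multiple of $(-\Delta)^{1/2}$, and the claimed gain of one derivative on the Hölder scale is precisely the Schauder estimate of order one.

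\emph{Step 1 (base regularity via Fourier expansion).} Expand $v$ and $f$ in the Fourier basis $\{e_k\}$ of $L^2_{p,e}$. Restricting to the $G$-part the computation already carried out in the proof of Lemma~\ref{lem:eigenval}, the equation \eqref{condighallapla1} decouples as $(\mu_k+\lambda)\hat v_k=\hat f_k$ with eigenvalues $\mu_k=\int_\R(1-\cos(k\tau))G(\tau)\,d\tau$. The Bessel function asymptotics in Lemma~\ref{asymtotic} yield $\mu_k\sim 2\pi k$ as $k\to\infty$. This gives the a priori bound $\|v\|_{H^1_{p,e}}\le C(\|f\|_{L^2_{p,e}}+\|v\|_{L^2_{p,e}})$, and the Sobolev embedding on the torus provides $v\in C^{0,1/2}(\R)$, hence in particular $v\in L^\infty(\R)$.

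\emph{Step 2 (kernel decomposition).} Fix a smooth even cutoff $\chi$ with $\chi\equiv 1$ near $0$ and compact support. Set $c_0:=4g(0)>0$ and decompose $\mathcal{L}=c_0\mathcal{L}_\chi+\mathcal{R}$, where
\[
\mathcal{L}_\chi v(s):=PV\int_\R\frac{v(s)-v(\tau)}{(s-\tau)^2}\chi(s-\tau)\,d\tau,
\]
and $\mathcal{R}$ has kernel $G(t)-c_0\chi(t)/t^2=4\chi(t)(g(t)-g(0))/t^2+(1-\chi(t))G(t)$. Using $g\in C^1([0,\infty))$ with $g'(0)=0$ (established inside the proof of Lemma~\ref{LemmabehaveG}) together with the $|t|^{-3}$ decay of $G$ at infinity, one verifies that $\mathcal{R}$ is bounded as an operator on the $2\pi$-periodic even Hölder spaces $C^{k,\sigma}(\R)$ for $k\in\{0,1\}$. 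Moreover $c_0\mathcal{L}_\chi$ differs from a positive multiple of the periodized half-Laplacian $(-\Delta)^{1/2}$ by an operator with integrable kernel $(1-\chi(t))/t^2$, hence bounded on the same scales.

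\emph{Step 3 (Schauder iteration).} Invoke the classical Schauder estimate for $(-\Delta)^{1/2}$ on the torus: if $(-\Delta)^{1/2}w+\lambda w\in C^{m,\sigma}$ and $w\in L^\infty$, then $w\in C^{m+1,\sigma}$. Rewriting \eqref{condighallapla1} as $c_0(-\Delta)^{1/2}v+\lambda v=f-\widetilde{\mathcal{R}}v$ with $\widetilde{\mathcal{R}}$ bounded on Hölder scales, we bootstrap: starting from $v\in C^{0,1/2}$ (Step~1) and $f\in C^{0,\sigma}$, one Schauder step gives $v\in C^{1,\sigma}$; if $f\in C^{1,\sigma}$, a second step yields $v\in C^{2,\sigma}$. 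Pointwise validity of \eqref{condighallapla1} then follows from $v\in C^{1,\sigma}$ via the standard antisymmetric-cancellation argument near $\tau=s$ in the principal value. The main technical hurdle is Step~2: although the bound $|g(t)-g(0)|\le Ct^2(1-\ln|t|)$ from the proof of Lemma~\ref{LemmabehaveG} makes the remainder kernel integrable, it is unbounded at $0$, so establishing Hölder-boundedness of $\mathcal{R}$ requires either a refined second-order Taylor analysis of $g$ at $0$ (beyond what is recorded in Lemma~\ref{LemmabehaveG}) or a careful principal-value treatment of $\mathcal{R}$ with explicit estimation of Hölder seminorms.
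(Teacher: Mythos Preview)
Your strategy is sound but differs from the paper's. The paper does not decompose $G$ into a half-Laplacian part plus a remainder; it applies the Schauder-type results of \cite{fall2020regularity} (Theorems~1.3(i) and~1.4(iv) there) directly to the kernel $G$, using precisely the structure $t^2G(t)=4g(|t|)$ with $g$ positive and $g\in C^\gamma$ for all $\gamma\in(\sigma,1)$, which is the standing hypothesis of that reference for order-one kernels. This yields $v\in C^\sigma$ and then $v\in C^{1,\sigma}$ in two applications. For the step to $C^{2,\sigma}$ when $f\in C^{1,\sigma}$, the paper uses difference quotients: translation invariance of $G$ makes $v_h:=(v(\cdot+h)-v)/|h|$ a solution with right-hand side $f_h$, and the $C^{1,\sigma}$ estimate applied uniformly in $h$ gives $v'\in C^{1,\sigma}$. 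Your route trades the black box of \cite{fall2020regularity} for the more classical half-Laplacian Schauder theory, at the cost of the decomposition step.

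Two remarks on your write-up. First, the ``main technical hurdle'' you flag in Step~2 is not a hurdle: once the remainder kernel $R(t)=G(t)-c_0\chi(t)/t^2$ is known to lie in $L^1(\R)$ (which the bound $|g(t)-g(0)|\le Ct^2(1-\ln|t|)$ together with the $|t|^{-3}$ decay already gives), the operator $\mathcal{R}$ rewrites as $\mathcal{R}v=\bigl(\int_\R R\bigr)v-R*v$, and convolution with an $L^1$ function is trivially bounded on every $C^{k,\sigma}$; no refined Taylor analysis of $g$ is needed. Second, the asymptotic $\mu_k\sim 2\pi k$ in Step~1 is not the content of Lemma~\ref{asymtotic} (which concerns the full eigenvalue $V$ including the $F$-contribution); it follows from the identity $\int_\R(1-\cos(\rho t))G(t)\,dt=2\pi\rho^2\bigl(I_0K_0+I_1K_1\bigr)(\rho)$ derived inside the proof of Lemma~\ref{lem:eigenval} (see \eqref{compV1}), combined with the Bessel asymptotics \eqref{eq:asyI1}.
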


\begin{proof}
We have that $G(-\t)=G(\t)$ for all $\t>0$. In addition, by Lemma \ref{LemmabehaveG}, we have $ t^2G(  t)=4 g(t)$, for some function $g\in C^{1}([0,\infty)$ with $g(t)>0$ for all $t \ge 0$. Hence for every $R>0$ we have $g\in C^{\g}([0,R])$ for all $\g\in (\s,1)$ and $\inf \limits_{[0,R]}g>0$.  We can thus apply  \cite{fall2020regularity}[Theorem 1.3-$(i)$]  to get 
$$
\|v\|_{C^{\s}([-R/2,R/2])}\leq   C(R,\s,\g,\l) \left(\|v\|_{L^2(-R,R)}+\int_{\R}\frac{|v(t)|}{1+ |t|^{2}}\,dt+ \|f\|_{L^{\infty}(\R)} \right).
$$
Therefore since $v$ is periodic, we get
\begin{equation}
  \label{eq:C1-a-est-01}
\|v\|_{C^{\s}(\R)}\leq C(\|v\|_{L^2_{p,e}}+  \|f\|_{L^{\infty}(\R)}  )  
\end{equation}
Now, by
 \cite{fall2020regularity}[Theorem 1.4-$(iv)$] and using again that $v$ is periodic, we obtain
\begin{equation}
  \label{eq:C1-a-est-02}
\|v\|_{C^{1,\s}(\R)}\leq   C \left(\|v\|_{L^{\infty}(\R)}+ \|f-\l v \|_{C^{\s}(\R)} \right).
\end{equation}
Combining (\ref{eq:C1-a-est-01}) and (\ref{eq:C1-a-est-02}), we deduce that
\be\label{eq:C1-a-est}
\|v\|_{C^{1,\s}(\R)}\leq \left( \|v\|_{L^2_{p,e}}+  \|f\|_{C^{\s}(\R)} \right) .
\ee
Suppose now that $f\in C^{1,\s}(\R)$. Then, from \eqref{condighallapla} and a change of variable, we get 
\begin{align*}
PV \int_{\R}\{v_h(s)-v_h(\t )\}G(s-\t) d\t +\l v_h= f_h(s) \qquad\textrm{ for all $s\in \R$,}
\end{align*}
where $v_h=\frac{v(\cdot+h)-v}{|h|}$ and  $f_h=\frac{f(\cdot+h)-f}{|h|}$.  Applying  once again   \cite{fall2020regularity}[Theorem 1.4-$(iv)$]  and using the periodicity of $v$. we get 
$$
\|v_h\|_{C^{1,\s}(\R)}\leq   C (\|v_h\|_{L^\infty(\R)}+  \|f_h-\l v_h\|_{C^\s(\R)}).
$$
Letting $h\to 0$ and using \eqref{eq:C1-a-est}, we get $v\in C^{2,\s}(\R)$ in this case.\\
Finally, the regularity properties of $v$ may be combined with Fubini's theorem  to see that (\ref{condighallapla1}) holds pointwisely.
%
\QED
\end{proof}

\begin{Proposition}\label{prop:All-HYPcRANDAL}
 There exists a unique $\l_*>0$ such that the linear operator
 $L_*:= L_{\l_*}: X \to Y $  has the following properties.
\begin{itemize}
\item[(i)] The kernel $N(L_*)$ of $L_*$ is spanned by the function $\cos(\cdot).$
\item[(ii)] $L_*: X\cap W_1^\perp  \to Y \cap W_1^\perp$ is an isomorphism.
\end{itemize}
Moreover
\begin{equation}
  \label{eq:transversality-cond}
\partial_\l \Bigl|_{\l= \l_*} L_\l e_1  =V'(\l_*)e_1 \not  \in Y \cap W_1^\perp.
\end{equation}
 \end{Proposition}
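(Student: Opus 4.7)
The plan is to take $\lambda_*>0$ as the unique positive zero of $V$ provided by Lemma~\ref{derv222} (uniqueness of $\lambda_*$ satisfying the stated properties is then automatic, since (i) forces $V(\lambda_*)=0$), and then to exploit the diagonalization of $L_\lambda$ on Fourier modes from Lemma~\ref{lem:eigenval} together with the singular-integral regularity result Lemma~\ref{LemmTworegul} and the $L^2_{p,e}$-self-adjointness of $L_\lambda$.

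First I would check that $V(k\lambda_*)>0$ for every $k\ge 2$. By \eqref{eq:NEGATIVDER} we have $V(\rho)<0$ near $0$, Lemma~\ref{asymtotic}(ii) gives $V(\rho)\to+\infty$, and Lemma~\ref{derv222} yields uniqueness of the zero together with $V'(\lambda_*)>0$; hence $V>0$ on $(\lambda_*,\infty)$. Any $v\in X$ with $L_* v=0$ belongs to $C^{2,\alpha}\subset L^2_{p,e}$ and so admits a Fourier expansion $v=\sum_{k\ge 1}c_k e_k$; Lemma~\ref{lem:eigenval} then forces $c_k V(k\lambda_*)=0$ for every $k$, so $c_k=0$ for $k\ge 2$, which gives (i).

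For (ii), I would first note that $L_*$ is $L^2_{p,e}$-symmetric, since both the principal-value kernel $G$ and the convolution kernel $F$ in the expression from Lemma~\ref{lem:DERVATCOSNT} are even. Hence for every $v\in X\cap W_1^\perp$,
\[
\langle L_* v,e_1\rangle_{L^2_{p,e}}=\langle v,L_* e_1\rangle_{L^2_{p,e}}=V(\lambda_*)\langle v,e_1\rangle_{L^2_{p,e}}=0,
\]
so $L_*$ sends $X\cap W_1^\perp$ into $Y\cap W_1^\perp$; injectivity is immediate from (i). For surjectivity, given $f=\sum_{k\ge 2}f_k e_k\in Y\cap W_1^\perp$, I would define $v_*:=\sum_{k\ge 2}(f_k/V(k\lambda_*))e_k$. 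Since $V(k\lambda_*)\asymp k$ by Lemma~\ref{asymtotic}(ii) and $f\in C^{1,\alpha}$ has rapidly decaying Fourier coefficients, the series converges in $H^{1/2}_{p,e}$ and solves $L_*v_*=f$ weakly. After the change of variable $t=s-\lambda_*\tau$ the equation reads
\[
PV\int_{\R}\{v_*(s)-v_*(t)\}\widetilde G(s-t)\,dt-2\pi v_*(s)=f(s)-(v_**\widetilde F)(s)
\]
with $\widetilde G(u):=G(u/\lambda_*)/\lambda_*$ still satisfying the hypotheses of Lemma~\ref{LemmTworegul}. A two-step bootstrap via Lemma~\ref{LemmTworegul} then upgrades $v_*$ from $H^{1/2}_{p,e}$ first to $C^{1,\sigma}$ and finally to $C^{2,\alpha}$, since convolution with the $L^1$-kernel $\widetilde F$ preserves H\"older regularity; continuity of the inverse follows from the open mapping theorem.

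The transversality statement \eqref{eq:transversality-cond} is immediate from Lemma~\ref{lem:eigenval}: differentiating the identity $L_\lambda e_1=V(\lambda)e_1$ in $\lambda$ yields $\partial_\lambda L_\lambda e_1=V'(\lambda)e_1$, which at $\lambda=\lambda_*$ equals $V'(\lambda_*)e_1$, a nonzero multiple of $e_1\in W_1$ by Lemma~\ref{derv222}, hence outside $Y\cap W_1^\perp$. The hardest step is the regularity bootstrap in (ii): one must propagate the $L^2$-level Fourier multiplier inverse through the singular-integral operator into the $C^{2,\alpha}$ framework, and the linear growth $V(\rho)/\rho\to 2\pi$ from Lemma~\ref{asymtotic}(ii) is the crucial quantitative input ensuring the inverse series has the decay required to feed into Lemma~\ref{LemmTworegul}.
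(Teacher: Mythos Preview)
Your approach is essentially the paper's: choose $\lambda_*$ as the unique zero of $V$, read off the kernel from the Fourier diagonalization $L_\lambda e_k = V(\lambda k)e_k$, invert $L_*$ on $W_1^\perp$ by the Fourier multiplier $1/V(\lambda_* k)$, and then bootstrap regularity through Lemma~\ref{LemmTworegul} after rewriting $L_* v_* = f$ as a principal-value equation with the $F$-convolution moved to the right-hand side. The transversality argument is identical, and your explicit check that $L_*$ preserves $W_1^\perp$ via $L^2$-symmetry is a nice addition that the paper leaves implicit.

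There is one small gap in your bootstrap. You only assert that the inverse series converges in $H^{1/2}_{p,e}$ and then claim Lemma~\ref{LemmTworegul} upgrades $v_*$ directly to $C^{1,\sigma}$. But to apply that lemma with $m=0$ you need the right-hand side $f - v_* * \widetilde F$ to lie in some $C^{0,\sigma}$, and convolution of an $H^{1/2}$ function with the $L^1$ kernel $\widetilde F$ does not produce a H\"older-continuous function (in one dimension $H^{1/2}$ is the critical case that fails to embed into $C^0$). The fix is immediate and is exactly what the paper does: since $V(k\lambda_*)\asymp k$ and $f\in C^{1,\alpha}\subset H^1_{p,e}$, your own Fourier estimate actually yields $v_*\in H^1_{p,e}$ (indeed $H^2_{p,e}$), and Morrey's embedding then gives $v_*\in C^{0,1/2-\varepsilon}$, which is enough to start the iteration. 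After that, the paper bootstraps $C^{0,\sigma}\to C^{1,\sigma}\to C^2\to C^{2,\alpha}$, using at each step only that $\widetilde F\in L^1$ so that $v_**\widetilde F$ inherits whatever classical regularity $v_*$ has.
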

 
\begin{proof} By Lemma~ \ref{derv222}, there exists a unique $\l_*>0$ such
that $V(\l_*)=0$ and $V'(\l_*)>0$.  This with \eqref{eq:-Lv-Fpurier-slab}  imply that
$N(L_*)=\textrm{span}\{e_1\} =W_1$ and we  get (i) and \eqref{eq:transversality-cond}.\\

We now prove (ii). 
We pick $h \in Y\cap W_1^\perp$ and consider the equation
\begin{equation}\label{invert}
L_*w =  h.
\end{equation}
Recalling \eqref{eq:L-lam-ek-eq-nuk-ek},  \eqref{invert} is uniquely solved  by the function 
$$w(s)=\sum_{\ell \in \N^{*}\setminus \{1\}}w_\ell e_\ell(s),$$ where 
\begin{equation}\label{eq:coeff}
w_\ell=\frac{1}{\pi V(\lambda_*\ell)}  \langle h, e_\ell  \rangle_{L^2_{p,e}}, \quad  \ell\ne 1, \quad \ell \ne 0.
\end{equation}
In addition, \begin{align}\label{eqchatacte2}
\sum_{\ell \in \N^{*}\setminus \{1\}} (1+\ell^2) \langle w, e_\ell
\rangle_{L^2_{p,e}} ^2 =\frac{1}{\pi^2 } \sum_{\ell \in \N^{*}\setminus \{1\}} \frac{(1+\ell^2)}{ V^2(\lambda_*\ell)}  \langle h, e_\ell
\rangle_{L^2_{p,e}} ^2.
\end{align}
Note that  $h\in H^{1}_{p,e}$ since  $h\in C^{1, \alpha}(\R) \subset H^{1}_{loc}(\R)$ and $g$ is periodic and even. This together with the asymptotic in Lemma \ref{asymtotic}(ii)  and the characterization  \eqref{eqchatacte} allow to see that the sum in \eqref{eqchatacte2} is finite. Hence $w\in H^{1}_{p,e} \subset    H^{1/2}_{p,e}$.   \\\\

To conclude the proof, we show  that $w\in C^{2, \a}(\R)$. 
By a  change of variable,  we have from \ref{lem:DERVATCOSNT}
$$
L_* w(s)=\frac{1}{ \l_*}  \int_{\R}\{w(s)-w(s-t )\}G(t/ \l_*) dt+\frac{1}{\l_*} F_{\lambda_*} \star w (s) -2 \pi w(s),
$$
where $F_{\lambda_*}(t)=F (t/\lambda_*)$ and $F$ is defined in \eqref{eq:def-F-}.
Consequently, \eqref{invert} reads 
\begin{align}\label{regulstart}
\int_{\R}(w(s)-w(\bar{s}))G((s-\bar{s})/ \l_*) d\bar{s}-2 \pi \lambda_* w(s)=f_*(s),
\end{align}
where $$f_*(s):=\lambda_*h(s)-F_{\lambda_*}\star w (s).$$
Now by Morrey's embedding  theorem  $w\in H^1_{p,e}\subset  C^{0,\frac{1}{2}-\e}(\R)$ for all $\e\in (0,1/2)$. 
Hence  $F_{\lambda_*}\star w \in C^{0,\frac{1}{2}-\e}(\R)$ (see  \eqref{eqrFin L1}). Thus $f_*\in   C^{0,\frac{1}{2}-\e}(\R).$ By \eqref{regulstart}  and  Lemma \ref{LemmTworegul}  we get  $w\in C^{1 }(\R)$ so that   $F_{\lambda_*}\star w \in C^{1}(\R)$.  Applying once again Lemma \ref{LemmTworegul} we obtain  $w\in C^{2 }(\R)$, yielding   $F_{\lambda_*}\star w \in C^{2}(\R)$. It then follows that $f^*\in C^{1, \a}(\R)$, so that  by Lemma \ref{LemmTworegul}  we get $w\in C^{2, \alpha}(\R).$ The proof is thus complete. 
\QED
\end{proof}

Combining Proposition~\ref{prop:All-HYPcRANDAL} with (\ref{eq:relationship-D-phi-L-lambda}) and the fact that
\begin{equation*}
\partial_\l \Bigl|_{\l= \l_*} D_\phi \Phi(\lambda,0)e_1  = -\frac{V'(\l_*)}{\lambda_*} e_1  \;  \not  \in \; Y \cap W_1^\perp,
\end{equation*}
we are now in position to apply the Crandall-Rabinowitz theorem \cite[Theorem 1.7]{M.CR}, which will give rise to the following
bifurcation property.

\begin{Theorem}\label{propPhi-crandall-rabinowitz}
\label{sec:nonl-probl-solve-1}
 Let $\l_*$ be defined as in Proposition~\ref{prop:All-HYPcRANDAL}, and consider $  X\cap X_1^\perp$ the   closed subspace of codimension one
such that $X=( X\cap X_1^\perp)\oplus X_1 $, and let
$\cD_\Phi \subset (0,+\infty) \times X$ be the open set defined
in (\ref{eq:DPhi}). Then there exists $b_0>0$ and a unique
$C^\infty$ curve
$$
(-b_0,b_0) \to \cD_\Phi, \qquad b \mapsto (\l(b), \phi_b)
$$
such that
\begin{itemize}
\item[(i)] $\Phi(\l(b), \phi_b)= \mathcal{N}(\l(b) + \phi_b)+\frac{1}{2} \equiv 0\quad$ for $b \in (-b_0,b_0)$,
\item[(ii)] $\l(0)= \l_*$,
\item[(iii)]  $\phi_b = b \bigl(\cos(\cdot) + v_b\bigr)$ for $b \in (-b_0,b_0)$, and
$$
(-b_0,b_0) \to X\cap X_1^\perp, \qquad b \mapsto v_b
$$
is a $C^\infty$ curve satisfying $v_0=0$.
\end{itemize}
\end{Theorem}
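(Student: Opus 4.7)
The plan is to apply the Crandall--Rabinowitz bifurcation theorem \cite[Theorem 1.7]{M.CR} directly to the map $\Phi$ defined in \eqref{CR-map} at the point $(\lambda_*, 0)\in \cD_\Phi$. The four hypotheses of that theorem require: (a) sufficient smoothness of $\Phi$; (b) a trivial branch $\Phi(\lambda,0)=0$; (c) a one-dimensional kernel and a codimension-one range for the Fr\'echet derivative $D_\varphi\Phi(\lambda_*,0)$; and (d) the transversality condition that $\partial_\lambda D_\varphi\Phi(\lambda_*,0)e_1$ does not lie in the range of $D_\varphi\Phi(\lambda_*,0)$. I would verify these one by one and then read off the conclusion.

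Smoothness of $\Phi$ is immediate from Proposition~\ref{prop:smooth-ovH} (restricted to the codimension-one subspace $X$ of zero-mean functions in $C^{2,\alpha}_{p,e}(\R)$), and the triviality of the branch $\lambda\mapsto (\lambda,0)$ is \eqref{HatCos}. The identity \eqref{eq:relationship-D-phi-L-lambda} shows that $D_\varphi\Phi(\lambda_*,0)$ coincides, up to the nonzero scalar $-1/\lambda_*$, with the operator $L_{\lambda_*}=L_*$ studied in Proposition~\ref{prop:All-HYPcRANDAL}. By part (i) of that proposition, $N(D_\varphi\Phi(\lambda_*,0)) = \spann\{e_1\}$ is one-dimensional, and by part (ii) the range equals $Y\cap W_1^\perp$, which has codimension one in $Y$. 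The transversality condition follows from \eqref{eq:transversality-cond}: since $V'(\lambda_*)>0$ by Lemma~\ref{derv222}, the vector $\partial_\lambda|_{\lambda=\lambda_*}D_\varphi\Phi(\lambda,0)e_1 = -\tfrac{V'(\lambda_*)}{\lambda_*}e_1$ is a nonzero multiple of $e_1$ and is therefore not contained in $Y\cap W_1^\perp$.

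With all four hypotheses verified, the Crandall--Rabinowitz theorem yields the existence of $b_0>0$ and a unique smooth curve $(-b_0,b_0)\ni b\mapsto (\lambda(b),\phi_b)\in \cD_\Phi$ satisfying $\lambda(0)=\lambda_*$ and $\Phi(\lambda(b),\phi_b)=0$, together with the normal-form decomposition
\begin{equation*}
\phi_b = b\bigl(\cos(\cdot) + v_b\bigr),\qquad v_0=0,
\end{equation*}
where $b\mapsto v_b$ is a smooth curve into the complement $X\cap W_1^\perp$ of $\spann\{e_1\}=W_1$ in $X$. This complement exists and is closed because $X = (X\cap W_1^\perp)\oplus W_1$ as an $L^2_{p,e}$-orthogonal decomposition which also respects the $C^{2,\alpha}$ topology (the projection onto $W_1$ is given by integration against $\cos$, which is continuous on $X$). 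Uniqueness of the curve near $(\lambda_*,0)$ is the standard conclusion of the theorem.

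Almost all of the analytical effort has already been expended in the earlier sections, so the only residual point to check carefully is the compatibility of the splitting $X=(X\cap W_1^\perp)\oplus W_1$ with the Banach-space structure required by \cite[Theorem 1.7]{M.CR}; this is straightforward since $W_1$ is finite-dimensional and closed, and the Fourier projection onto $W_1$ is continuous on $C^{2,\alpha}_{p,e}(\R)$. No step presents a substantial obstacle beyond bookkeeping, as the spectral information in Proposition~\ref{prop:All-HYPcRANDAL} and the strict monotonicity $V'(\lambda_*)>0$ from Lemma~\ref{derv222} together supply exactly the input the abstract theorem demands.
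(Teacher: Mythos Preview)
Your proposal is correct and follows essentially the same approach as the paper: the paper's proof consists precisely of the paragraph preceding the theorem, which invokes the Crandall--Rabinowitz theorem after noting that Proposition~\ref{prop:All-HYPcRANDAL}, the relation~\eqref{eq:relationship-D-phi-L-lambda}, and the transversality computation $\partial_\lambda|_{\lambda=\lambda_*} D_\varphi\Phi(\lambda,0)e_1 = -\tfrac{V'(\lambda_*)}{\lambda_*}e_1 \notin Y\cap W_1^\perp$ verify all the required hypotheses. Your write-up simply makes these verifications more explicit and adds the minor (and correct) observation about the topological splitting $X = (X\cap W_1^\perp)\oplus W_1$.
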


The proof of  Theorem~\ref{theo1} follows from  Theorem~\ref{propPhi-crandall-rabinowitz} by  considering for every   $k\geq 1$,  the operator $L_k:=L_{\frac{\l_*}{k}}$ as given in  \eqref{eq:def-L-lambda}  and  \eqref{eq:L-lam-ek-eq-nuk-ek}.

\appendix 
\section{Identities and estimates involving modified Bessel functions}\label{appendix}
This section collects useful properties of the families of modified  Bessel functions $I_{\nu}$ and $K_{\nu}$. We have used these properties in Section~\ref{sec: linearized}.

\subsection{General properties}\label{appendix1}

For $\nu \ge 0$, the function $I_\nu$ is defined on $[0,\infty)$ by  
 \begin{equation}\label{Bes1}
 I_{\nu}(x)=\sum^{\infty}_{m=0}\frac{(\frac{1}{2} x)^{\nu+2m}}{m!\Gamma(\nu+m+1)} \qquad \textrm{ for  $x \ge 0$}.
\end{equation}
We note that $I_\nu$ is smooth on $(0,\infty)$ and positive on $(0,\infty)$, while $I_0(0)=1$ for $I_\nu(0)=0$ for $\nu>0$.

Moreover, for $\nu \in \R$, the integral representation 
\begin{equation}\label{Bes2}
 K_{\nu}(x)= \int_{0}^{\infty} e^{-x \cosh (t)} \cosh (\nu t)  dt \qquad \textrm{ for  $x > 0$} 
\end{equation}
can be used to define the smooth positive function $K_\nu: (0,\infty) \to \R$.

\subsection{Derivatives}\label{appendix2}
For all  $x \in(0, + \infty)$, we have
\begin{align} 
K'_0(x)&=-K_1(x), \quad [xK_1(x)]'=-xK_0(x),\label{eq:formderibess}\\
xI'_{\nu}(x)&=xI_{\nu-1}(x)-\nu I_{\nu} (x),\label{relateI}\\
-x K'_{\nu}(x)&=x K_{\nu-1}(x)+\nu K_{\nu}(x), \label{relateK}
\end{align}
see e.g. \cite{El} and \cite{BariczPonnusamy}. 
\subsection{Asymptotic behaviour}\label{appendix2-asym}

Asymptotics of  $I_{\nu}$  and $K_{\nu}$ are given e.g. in \cite[Page 4]{HangYu}). In particular, we have 
 \begin{align} \label{eqasymp}
&\lim \limits_{x \to 0} x K_1(x)=1,\quad\quad \lim \limits_{x\to 0}  x K_0(x) =0\\
&\forall \mu\geq 0,  \quad K_\mu(x)\sim \frac{\sqrt{\pi}}{\sqrt{2}}x^{-\frac{1}{2}} e^{-x},\quad I_\mu(x)\sim \frac{1}{\sqrt{2\pi}}x^{-\frac{1}{2}} e^{x}, \quad \textrm{as}\quad x \rightarrow +\infty \label{eq:asyI1}\nonumber\\
\end{align}

\subsection{Inequalities}\label{appendix3}
Since $\cosh$ is a strictly increasing function on $[0,\infty)$, it follows from (\ref{Bes2}) that 
\begin{align} 
&K_\nu(x) < K_\mu(x) \qquad \text{for $0 \le \nu < \mu$ and $x \in (0,\infty)$} \label{eqCompK0K1}
 \end{align} 
In addition, \eqref{eqCompK0K1}, the first limit in \eqref{eqasymp} and the second identity in  \eqref{eq:formderibess} imply,
 \begin{align} \label{eqlimi11}
 x  K_0(x)< x  K_1(x)\leq 1 \quad \textrm{for all $ x \in(0, + \infty).$}
 \end{align}
The following inequalities for $x>0$ are also available, see e.g. \cite[(3.34), (3.35)]{Gaunt1}), \cite{HangYu}  and \cite{Nassel}:
\begin{align} \label{eq:compI1I00}
&\frac{x}{2+x}<  \frac{I_1(x)}{I_0(x)}<\frac{2x}{1+2x}\quad \textrm{and}\quad \frac{x}{2+x}<  \frac{I_1(x)}{I_0(x)}<\frac{x}{2},
\end{align}
\begin{align} \label{eq:compK1K0}
 & \frac{3+4x}{1+4x}<  \frac{K_1(x)}{K_0(x)}<\frac{1+2x}{2x},
\end{align}
\begin{align} \label{IneI}
 & I_{\nu}(x)> I_{\mu}(x), \quad \mu>\nu\geq 0,
\end{align}
\begin{align} 
& 0\leq x K_{\nu}(x) I_{\nu}(x)<\frac{1}{2};\quad \nu>\frac{1}{2} \label{eqINEK1}\\
&  \frac{1}{2}< x K_{\nu+1}(x) I_{\nu}(x)\leq 1; \quad \nu>-\frac{1}{2}. \label{eqINEK}
\end{align}

\subsection{A Wronskian identity}\label{appendix4}
For all $x\in (0, +\infty)$ we have (see e.g. \cite[page 79]{NevWatson})
\begin{align} \label{eq:valK2}
x^2K_2(x)=x^2K_0(x)+2 x K_1 (x), 
\end{align}
and  the following Wronskian identity (see e.g. \cite[page 251]{Olver}):
\begin{align} \label{relawrosk}
& I_{\nu}(x)K_{\nu+1}+I_{\nu+1}(x)K_{\nu}(x)=\frac{1}{x}.
\end{align}

\subsection{Some integral identities}\label{appendix5}
As noted in \cite[Page 442, $5.^7$]{I.S.I.M}, we have
\begin{equation}
  \label{eq:integral-identity-1}
  \int_0^\infty \frac{t \sin(at)}{(t^2+\beta^2)^{\nu}}\,dt = \frac{\Gamma(1-\nu)}{\sqrt{\pi}}\beta \Bigl(\frac{2\beta}{a}\Bigr)^{\frac{1}{2}-\nu}K_{\frac{3}{2}-\nu}(a \beta)=  \frac{\sqrt{\pi}}{\Gamma(\nu)} \beta \Bigl(\frac{2\beta}{a}\Bigr)^{\frac{1}{2}-\nu}K_{\frac{3}{2}-\nu}(a \beta)
\end{equation}
for $a, \beta>0$, $\nu >\frac{1}{2}$, so in particular 
\begin{equation}
  \label{eq:integral-identity-1-special-case}
  \int_0^\infty \frac{t \sin(at)}{(t^2+\beta^2)^{\frac{3}{2}}}\,dt = a K_{0}(a \beta)\quad \text{for $a, \beta>0$.}
\end{equation}
Similarly, we have, as noted in \cite[Page 442, $2.$]{I.S.I.M},
\begin{equation}
  \label{eq:integral-identity-2}
  \int_0^\infty \frac{\cos (at)}{(t^2+\beta^2)^{\nu}}\,dt =
\frac{\Gamma(1-\nu)}{\sqrt{\pi}} \Bigl(\frac{2\beta}{a}\Bigr)^{\frac{1}{2}-\nu}K_{\nu-\frac{1}{2}}(a \beta)=  \frac{\sqrt{\pi}}{\Gamma(\nu)}\Bigl(\frac{2\beta}{a}\Bigr)^{\frac{1}{2}-\nu}K_{\nu-\frac{1}{2}}(a \beta)
\end{equation}
for $a, \beta>0$, $\nu >0$, so in particular
\begin{equation}
  \label{eq:integral-identity-2-special-case}
  \int_0^\infty \frac{\cos (at)}{(t^2+\beta^2)^{\frac{3}{2}}}\,dt =\frac{a}{\beta} K_{1}(a \beta)\quad \text{for $a, \beta>0$.}
\end{equation}
and
\begin{equation}
  \label{eq:integral-identity-2-special-case-5-2}
  \int_0^\infty \frac{\cos (at)}{(t^2+\beta^2)^{\frac{5}{2}}}\,dt =\frac{a^2}{3 \beta^2} K_{2}(a \beta)\quad \text{for $a, \beta>0$.}
\end{equation}
We also recall the following identity, as stated in \cite[$13^8.$, Page 724]{I.S.I.M}:
\begin{equation}
  \label{eq:final-integral-identity}
\int_0^{\frac{\pi}{2}}K_{\nu-m}(2\rho \cos \theta)\cos((m+\nu) \theta)d \theta = (-1)^{m}\frac{\pi}{2}I_{m}(\rho)K_\nu(\rho)
\end{equation}
for every nonnegative integer $m$ and $\nu < m+1$.

\end{document}